\title{On non-compact $p$-adic definable groups}
\date{\today}
\author{Will Johnson and Ningyuan Yao}
\newtheorem{Thm}{Theorem}[section]
\newtheorem{Prop}[Thm]{Proposition}
\newtheorem{Lemma}[Thm]{Lemma}
\newtheorem{Cor}[Thm]{Corollary}
\newtheorem{Fact}[Thm]{Fact}
\newtheorem{Conj}[Thm]{Conjecture}
\newtheorem*{Claim1}{Claim 1}
\newtheorem*{Claim2}{Claim 2}
\newtheorem*{Claim}{Claim}
\theoremstyle{definition}
\newtheorem{Def}[Thm]{Definition}
\newtheorem{Notation}[Thm]{Notation}
\newtheorem{Rmk}[Thm]{Remark} 
\theoremstyle{remark}
\newtheorem*{Acks}{Acknowledgments}
\newenvironment{claimproof}[1][\proofname]
               {
                 \proof[#1]
                 
               }
               {
                 \endproof
               }
\newcommand{\R}{\mathbb R}
\newcommand{\B}{{\cal B}}
\newcommand{\Q}{{\mathbb Q}_p}
\newcommand{\Z}{{\mathbb Z}_p}
\newcommand{\N}{\mathbb N}
\newcommand{\M}{\mathbb M}
\newcommand{\la}{\mathcal{L}}
\newcommand{\mupn}{{\mu^N\cdot p^N}}
\newcommand{\mup }{{\mu\cdot  p }}
\newcommand{\stn }{{\st_N^{\M}}}
\newcommand{\ra}{\rightarrow}
\newcommand{\sq}{\subseteq}
\DeclareMathOperator{\dpr}{dp-rk}
\DeclareMathOperator{\Th}{Th}
\DeclareMathOperator{\id}{id}
\DeclareMathOperator{\stab}{stab}
\DeclareMathOperator{\Stab}{Stab}
\DeclareMathOperator{\tp}{tp}
\DeclareMathOperator{\dcl}{dcl}
\DeclareMathOperator{\st}{st}
\DeclareMathOperator{\acl}{acl}
\DeclareMathOperator{\alg}{alg}
\begin{document}
\maketitle

\begin{abstract}
In \cite{Y.-Peterzil-and-C.-Steinhorn}, Peterzil and Steinhorn proved
that if a group $G$ definable in an $o$-minimal structure is not
definably compact, then $G$ contains a definable torsion-free subgroup
of dimension one. We prove here a $p$-adic analogue of the
Peterzil-Steinhorn theorem, in the special case of abelian groups.

Let $G$ be an abelian group definable in a $p$-adically closed field
$M$. If $G$ is not definably compact then there is a definable
subgroup $H$ of dimension one which is not definably compact.  In a
future paper we will generalize this to non-abelian $G$.
\end{abstract}

\section{Introduction}

In \cite{Y.-Peterzil-and-C.-Steinhorn}, Peterzil and Steinhorn prove
that if $G$ is a definable group in an $o$-minimal structure $M$, and
$G$ is not definably compact, then $G$ has a definable 1-dimensional
subgroup $H$ that is not definably compact.  To prove this, they take
a continuous unbounded definable curve $I : [0,+\infty) \to G$ and
take $H$ to be the ``tangent line at $\infty$.''  This can be made
precise using the language of \emph{$\mu$-types} and
\emph{$\mu$-stabilizers} developed later by Peterzil and Starchenko
\cite{Y.-Peterzil-and-S.-Starchenko}.  Say that two complete types $q,
r \in S_G(M)$ are ``infinitesimally close'' if there are realizations
$a \models q$ and $b \models r$ such that $ab^{-1}$ is infinitesimally
close to $\id_G$ (that is, $ab^{-1}$ is contained in every
$M$-definable neighborhood of $\id_G$).  This is an equivalence
relation on $S_G(M)$, and equivalence classes are called
``$\mu$-types.''  The ``$\mu$-stabilizer'' $\stab^\mu(q)$ of $q \in
S_G(M)$ is the stabilizer of the $\mu$-type of $q$.

With these definitions, the ``tangent line of $I$ at $\infty$'' is
simply the $\mu$-stabilizer of the type on $I$ at infinity, an
unbounded 1-dimensional definable type.  (Here, we say that a type $q
\in S_G(M)$ is ``unbounded'' if no formula in $q$ defines a definably
compact subset of $G$.)  Peterzil and Steinhorn essentially show
that the $\mu$-stabilizer of an unbounded 1-dimensional definable type
is a torsion free non-compact definable subgroup of dimension 1.  More
generally, in \cite{Y.-Peterzil-and-S.-Starchenko}, Peterzil and
Starchenko consider a general definable type $q \in S_G(M)$, showing
that $\stab^\mu(q)$ is a torsion-free definable group of a certain
dimension.

It is natural to ask whether analogous results hold in the theory
$p$CF ($p$-adically closed fields).  There are many formal
similarities between $p$CF and o-minimal theories, especially RCF
(real closed fields).  In both settings, definable groups can be
regarded as real or $p$-adic Lie groups
\cite{Pillay-G-in-o,Pillay-G-in-p}, and are locally isomorphic to real
or $p$-adic algebraic groups \cite{H-P-groups-in-local-fields}. In both
the real and $p$-adic contexts, definable sets have a dimension which
has a topological description as well as an algebraic description (the
algebro-geometric dimension of the Zariski closure). On the other
hand, definable connectedness behaves very differently in the two
settings.

In this paper, we restrict our attention to one-dimensional definable
types, as in the original work of Peterzil and Steinhorn
\cite{Y.-Peterzil-and-C.-Steinhorn}.  Unfortunately, we must also
assume that $G$ is ``nearly abelian'' for most of our theorems.
\begin{Def}\label{d-na}
  Let $G$ be a definable group in a model of $p$CF. 
  $G$ is \emph{nearly abelian} if there is a definably compact
  definable normal subgroup $K \subseteq G$ with $G/K$ abelian.
\end{Def} 
See Definition~\ref{defdefc} for a precise definition of ``definable compactness,'' and Propositions~\ref{fixed-def} and \ref{sp-char} for some equivalent conditions.

Our main results are as 
follows:
\begin{Thm}\label{first-main}
  Let $G$ be a definable group over a $p$-adically closed field $M$.
  If $G$ is not definably compact and $G$ is nearly abelian, then there is a
  1-dimensional definable subgroup $H \subseteq G$ that is not
  definably compact.
\end{Thm}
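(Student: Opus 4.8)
The plan is to carry out the Peterzil--Steinhorn strategy described in the introduction, working with $\mu$-types. The first task is to produce a one-dimensional unbounded definable type $q \in S_G(M)$. Replacing $G$ by a definably isomorphic copy we may assume $G \subseteq M^n$; since $G$ is not definably compact it is unbounded in $M^n$, so some fixed coordinate projection $\pi_i$ is unbounded on $G$. Using $p$-adic cell decomposition together with definable Skolem functions we find a one-cell $C_0 \subseteq G$ and an unbounded one-cell $U \subseteq M$ such that $\pi_i$ restricts to a homeomorphism $C_0 \to U$. On $M$ there is a definable unbounded complete $1$-type concentrating on $U$ (``types at infinity'' of this kind exist in $p$CF), and pushing it forward along $(\pi_i|_{C_0})^{-1}$ yields the desired $q$: it is definable, being the pushforward of a definable type; it concentrates on $C_0$, so $\dim q \le 1$; and it is unbounded, since every formula in $q$ contains a tail of $C_0$, which is unbounded in $G$ and hence lies in no definably compact definable set --- in particular $\dim q = 1$.

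Now pass to a monster model $\M \succ M$, let $\mu = \mu_G \le G(\M)$ be the infinitesimal subgroup (the intersection of the $M$-definable neighbourhoods of $\id_G$; a normal type-definable subgroup, as $G(M)$ acts on $G(\M)$ by homeomorphisms), and set
\[
  H := \stab^\mu(q) = \{\, g \in G(M) : g\cdot q \text{ is infinitesimally close to } q \,\}.
\]
Two facts must be established here, by adapting the arguments of Peterzil and Starchenko; this is the stage at which I expect the abelian hypothesis to enter. First, that $H$ is an honest \emph{definable} subgroup of $G$. Second, that $\dim H \le \dim q = 1$.

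What remains --- and what I expect to be the main obstacle --- is to show that $H$ is not definably compact. Granting this, a definable subgroup of $G$ contained in a definably compact definable set is definably compact (definable subgroups of $G$ being closed), so $H$ is unbounded, hence $\dim H \ge 1$, so $\dim H = 1$, and $H$ is the subgroup we want. The reason for the non-compactness is that the unboundedness of $q$ propagates to $H$: one has $g \in H$ as soon as $g a \equiv a' \pmod{\mu}$ for some $a, a' \models q$, and because $q$ concentrates at infinity, realizations of $q$ may be perturbed within $\mu$ without leaving $q(M)$, while the scale at which such a realization sits converts a suitably chosen infinitesimal perturbation into an element of $G(M)$ of arbitrarily large norm. (In the basic case $G = \Ga$ one checks directly that $\stab^\mu$ of the type at infinity is all of $\Ga$.) Making this argument work for an arbitrary one-dimensional unbounded definable type $q$, rather than for an explicitly given curve as in the o-minimal setting, is the heart of the matter; it requires careful control of the relative valuations of realizations of $q$, of elements of $M$, and of infinitesimals.
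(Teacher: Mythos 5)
Your plan stalls exactly at the step you flag as ``the heart of the matter,'' and what you offer there is a heuristic, not an argument. The claim that unboundedness of $q$ forces $\stab^\mu(q)$ to be unbounded is precisely the point where the o-minimal proof (Peterzil--Steinhorn's Lemma 3.8, a connectedness argument: a curve must cross every annulus $g(O_{t'}\setminus O_t)$ on its way to infinity) has no $p$-adic analogue, since everything is totally disconnected. The paper replaces it by a genuinely new ingredient: the ``large gaps'' statement (Corollary~\ref{gaps-1}) that for an unbounded definable $I\subseteq G$ and any $t$ there is $t'$ such that $\{g\in I : g(O_{t'}\setminus O_t)\cap I=\emptyset\}$ is bounded. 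This is proved by showing that a counterexample would yield chains of externally definable bounded subgroups witnessing infinite dp-rank (Lemma~\ref{new-config}, via the Shelah expansion), contradicting finiteness of dp-rank in $p$CF; and it is exactly here, not in the definability or dimension of the stabilizer, that the abelian (nearly abelian) hypothesis is used, to get a normal bounded subgroup containing $[G,G]$. Your talk of converting infinitesimal perturbations into large elements of $G(M)$ by ``careful control of relative valuations'' is the connected-curve intuition restated and does not survive in this setting; without Corollary~\ref{gaps-1} and its consequence Lemma~\ref{unbounded-II}, the non-compactness of the stabilizer is unsupported.

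There is a second genuine error: you assert that $H=\stab^\mu(q)$ is a \emph{definable} subgroup and you compute it inside $G(M)$. In $p$CF there is no descending chain condition on definable subgroups, so $\stab^\mu(q)$ is in general only type-definable (Theorem~\ref{second-main}); worse, over a non-saturated $M$ it can be trivial --- the paper's own example $M=\Q$, $G=\Gm$, where $\stab^\mu(r)=\bigcap_n P_n=\{1\}$ --- so the group $H$ you define may be $\{\id_G\}$ and your concluding ``unbounded, hence $\dim H\ge 1$'' fails for it. The paper's route is to pass to an $|M|^+$-saturated $N\succ M$, prove that $\stab(\mu^N\cdot p^N)$ is $1$-dimensional and unbounded there, and then extract a single \emph{definable} group $\stab_\phi(\mu\cdot p)$ (Definition~\ref{stab-phi}) containing it: $\dim\le 1$ comes from the standard-part dimension bound $\dim\st_N^{\M}(\psi(\M)\beta^{-1})\le\dim\psi$ together with saturation, and non-compactness from containing the unbounded type-definable stabilizer. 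Your proposal has no substitute for this mechanism. (Minor by comparison: the paper obtains the unbounded $1$-dimensional definable type directly from the specialization criterion, Proposition~\ref{sp-char}, rather than via cell decomposition; your projection argument also needs care because definable compactness of $G$ refers to its definable manifold topology, not boundedness in the ambient affine space.)
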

We plan to
generalize Theorem~\ref{first-main} to non-abelian groups in a future
paper.

\begin{Thm}\label{second-main}
  Suppose that $G$ is a definable group over an
  $\aleph_1$-saturated $p$-adically closed field $M$.  Then for any
  definable unbounded 1-dimensional type $r \in S_G(M)$, the
  $\mu$-stabilizer $\stab^\mu(r)$ is a 1-dimensional type-definable
  subgroup of $G$.  If $G$ is abelian (or nearly abelian), then
  $\stab^\mu(r)$ is unbounded.
\end{Thm}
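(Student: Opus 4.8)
The plan is to build $\stab^\mu(r)$ as the $p$-adic version of the Peterzil--Steinhorn ``tangent line at infinity'' and pin down its size. That $\stab^\mu(r)$ is a type-definable subgroup of $G$ is the soft part: by definition it is the stabilizer of a single point --- the $\mu$-type of $r$ --- for the translation action of $G$ on the space of $\mu$-types, so it is a subgroup, and since $r$ is a definable type a compactness argument shows that ``$g\cdot r$ is infinitesimally close to $r$'' is given by a partial type over $M$ in the variable $g$; equivalently $\stab^\mu(r)=H(M)$ for a type-definable subgroup $H\le G$, exactly as in the $\mu$-stabilizer formalism of Peterzil and Starchenko. The content of the theorem is therefore the dimension count and, for abelian (or nearly abelian) $G$, unboundedness.

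For the upper bound $\dim\stab^\mu(r)\le 1$: since $\dim r=1$, $r$ concentrates on a definable curve $C$; for $a\models r$ and $g\in\stab^\mu(r)$, the element $g\cdot a$ is infinitesimally close to a realization of $r$, so modulo the infinitesimal subgroup $\mu_G$ the orbit $\stab^\mu(r)\cdot a$ maps into the set of $\mu_G$-cosets of realizations of $r$, which has dimension $\dim r=1$; since the translation action is (essentially) free, $\dim\stab^\mu(r)$ is bounded by this image, so $\dim\stab^\mu(r)\le 1$ --- the argument parallels Peterzil--Starchenko, the only $p$-adic subtlety being bookkeeping of dimension modulo $\mu_G$. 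The reverse bound $\dim\stab^\mu(r)\ge 1$ is the heart of the matter: here I would use that $r$ is unbounded, one-dimensional and definable, invoking $p$-adic cell decomposition to coordinatize $C$ near infinity as the graph of a tuple of definable one-variable functions of a parameter $t$ whose valuation drops below the value group of $M$, and then analyse these functions as $t\to\infty$ --- the replacement for the o-minimal monotonicity theorem and for taking limits of secant directions --- to produce a nontrivial type-definable one-parameter family of $g$ with $g\cdot r\sim_\mu r$; this forces $\dim\stab^\mu(r)\ge 1$, hence equality.

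For unboundedness when $G$ is abelian, the elements $g$ produced above can then be taken to be honest ``asymptotic translations along $C$'', and since $C$ escapes every definably compact subset of $G$ they are not confined to any definably compact subgroup, so $\stab^\mu(r)$ is not definably compact; the nearly-abelian case follows by passing to $G/K$ for a definably compact $K\supseteq[G,G]$, where definable compactness of $\stab^\mu(r)$ would persist. I expect the main obstacle to be precisely this lower-bound-plus-unboundedness package: in the o-minimal setting the tangent line at infinity of an unbounded curve is automatically a non-compact one-dimensional subgroup, whereas over a $p$-adically closed field there is no monotonicity theorem, so the ``asymptotic direction group'' must be constructed by hand from the structure theory of definable curves at infinity, and it is exactly the verification that this group is unbounded --- and, underneath, that the asymptotic translations genuinely form a subgroup sitting inside $\stab^\mu(r)$ --- that forces the commutativity hypothesis and the deferral of the non-abelian case.
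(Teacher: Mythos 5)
Your soft claims (type-definability via the definable groups $\stab_\phi(\mu\cdot r)$, and the upper bound $\dim\stab^\mu(r)\le 1$) are in the right spirit; the paper gets the upper bound by showing that $\stab(\mu^N\cdot r^N)$ is contained in the standard part $\st_N^{\M}(\psi(\M)\beta^{-1})$ of a single $1$-dimensional formula $\psi\in r$, together with the fact that the standard part of a set definable over a realization of a definable type has dimension at most that of the set, which is essentially the bookkeeping you gesture at. But there is a genuine gap at exactly the point you flag as ``the heart of the matter'': you never supply the mechanism that makes asymptotic translations along the curve land in $\stab^\mu(r)$. In the o-minimal proof this is Peterzil--Steinhorn's Lemma 3.8: for $g$ far out on the curve $I$, the translated annulus $g(O_t\setminus O_{t_0})$ must meet $I$, by connectedness. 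In a $p$-adically closed field nothing like your proposed cell-decomposition coordinatization of $C$ at infinity rules out the bad scenario in which $I$ consists of ever more widely separated ``islands,'' i.e.\ for every $t$ there are unboundedly many $g\in I$ with $g(O_t\setminus O_{t_0})\cap I=\emptyset$; in that scenario no nontrivial translation stabilizes the $\mu$-type of $r$, and analysing the coordinate functions of the curve as the parameter goes to infinity gives no contradiction by itself. The paper's replacement is a global model-theoretic argument: if such large gaps existed, one could build chains of bounded externally definable subgroups $X_0\subseteq X_1\subseteq\cdots$ (via unions and intersections of the $O_t$, which are externally definable) and use the gap structure to produce tuples of arbitrarily large dp-rank, contradicting finiteness of dp-rank in $p\mathrm{CF}$; this is also precisely where the (near-)abelian hypothesis enters, since the construction needs the $X_i$ (or auxiliary $Y_i$) to be suitably normalized. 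With that ``large gaps'' statement in hand, unboundedness of $\st_N^{\M}(I(\M)\beta^{-1})$ for every $I\in r$ follows, and then saturation plus the identity $\stab(\mu^N\cdot r^N)=\bigcap_{\phi\in r}\st_N^{\M}(\phi(\M)\beta^{-1})$ yields unboundedness and hence $\dim\ge 1$. Your proposal contains no substitute for this step, so the lower bound and the unboundedness remain unproven.

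A secondary issue: your reduction of the nearly abelian case ``by passing to $G/K$'' is not how the paper proceeds and is not routine, since $G/K$ is only interpretable and one would have to redevelop definable compactness and $\mu$-stabilizers there and relate $\stab^\mu$ of the pushforward type to the image of $\stab^\mu(r)$; the paper instead absorbs $K\supseteq[G,G]$ into the base subgroup $X_0$ of the dp-rank construction, which is what makes the gap argument go through verbatim in the nearly abelian case.
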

Here, a set or type is ``bounded'' if it is contained in a definably compact set, and ``unbounded'' otherwise (Definition~\ref{def-bounded}).
The assumption on saturation is necessary.  For example, suppose $M =
\Q$, $G$ is the multiplicative group, and $r \in S_G(\Q)$ is one of
the definable types consistent with $\{x \mid v(x) < \mathbb{Z}\}$.
Then $\stab^\mu(r)$ is the intersection of all $n$-th powers $P_n=\{x
\mid x\neq 0\wedge \exists (x=y^n)\}$, which is the trivial group
$\{1\}$.

We can also say something when $M$ is not saturated, but we will need
a few more definitions from \cite{Y.-Peterzil-and-S.-Starchenko}.  Fix
a group $G$ definable in a $p$-adically closed field $M$.  For any
partial type $\Sigma(x)$ in $G$, and any $\la$-formula $\phi(x;y)$,
let $\stab_\phi(\Sigma)$ denote
\begin{equation*}
  \bigcap_{b \in M^k} \stab \{ g \in G(M) \mid \Sigma \vdash \phi(g x; b) \}.
\end{equation*}
(This can be understood as the stabilizer of the $\phi(z \cdot x;
y)$-type generated by $\Sigma(x)$.)  It turns out that $\stab(\Sigma)
= \bigcap_{\phi \in \la} \stab_\phi(\Sigma)$.  Now suppose that $r$ is
a type in $S_G(M)$.  Let $\mu$ be the partial type of
``infinitesimals,'' that is, the set of $\la_M$-formulas definining
neighborhoods of $\id_G$.  Let $\mu \cdot r$ be the partial type such
that $(\mu \cdot r)(N) = \mu(N) \cdot r(N)$ for sufficiently saturated
$N \succ M$.  It turns out that
\begin{equation*}
  \stab^\mu(r) = \stab(\mu \cdot r) = \bigcap_{\phi \in \la}
  \stab_\phi(\mu \cdot r).
\end{equation*}
Moreover, when $r$ is definable, the groups $\stab_\phi(\mu \cdot r)$
are definable, and $\stab(\mu \cdot r)$ is type-definable.  (This is
the reason why $\stab^\mu(r)$ is type-definable in
Theorem~\ref{second-main}.  In the o-minimal case, there is a
descending chain condition on definable groups, which ensures that
$\stab^\mu(r)$ is \emph{definable} in
\cite{Y.-Peterzil-and-S.-Starchenko}.)

\begin{Thm}\label{third-main}
  Suppose that $G$ is a definable group over a $p$-adically closed
  field $M$.  Let $r \in S_G(M)$ be a definable unbounded 1-dimensional type.
  Then there is a formula $\phi \in \la$ such that $\stab_\phi(\mu
  \cdot r)$ is a 1-dimensional definable subgroup of $G$.  When $G$ is
  abelian (or nearly abelian), $\stab_\phi(\mu \cdot r)$ is unbounded.
\end{Thm}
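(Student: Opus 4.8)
The plan is to treat Theorem~\ref{third-main} as the technical heart of the paper and to construct the ``tangent subgroup of $r$ at infinity'' by hand, exhibiting the required formula $\phi$ explicitly. First I would pass to a monster model $\M \succ M$ and fix a realization $a \models r$; since $r$ is unbounded and one-dimensional, $a$ lies outside every $M$-definable definably compact subset of $G$ and $\dpr(a/M) = 1$. The main device is a valuative measurement of the rate at which $a$ escapes to infinity: working in local coordinates on $G$ (using its $p$-adic Lie group structure), the ``max-gap'' of $a$ over $M$ --- the largest jump between consecutive valuative scales visible in $\tp(a/M)$ --- plays the role that arc-length along an unbounded curve plays in the o-minimal argument of Peterzil and Steinhorn. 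I would use this to attach to $r$ a canonical formula $\phi_0(x;y) \in \la$ recording precisely this rate data, set $H_0 := \stab_{\phi_0}(\mu \cdot r)$, and note that $H_0$ is a subgroup of $G(M)$ which is moreover definable over $M$ --- even though $r$ need not be a definable type --- because $\phi_0$ refers only to the (definable) valuative-rate information and to no undefinable feature of $r$.

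For the bound $\dim H_0 \le 1$: if $h \in \stab^\mu(r) = \stab(\mu \cdot r)$, then since $\id_G \in \mu$ we have $r \vdash \mu \cdot r$, hence $a \models \mu \cdot r$ and $ha \models h \cdot (\mu \cdot r) = \mu \cdot r$, so $ha$ is infinitesimally close to some $a' \models r$, i.e.\ $ha(a')^{-1} \in \mu$. Reading off how the valuative behaviour of $ha$ relates to that of $a$ and of $a'$, and using $a, a' \models r$, one confines $h$ to a one-dimensional definable ``tangent cone'' $D \subseteq G(M)$ --- informally, $h$ can only be a bounded displacement in the direction along which $r$ runs off to infinity. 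The same bookkeeping, applied to the formula $\phi_0$, shows that in fact $\stab_{\phi_0}(\mu \cdot r) \subseteq D$, not merely $\stab^\mu(r) = \bigcap_\phi \stab_\phi(\mu \cdot r) \subseteq D$, so $\dim H_0 \le 1$. For the reverse inequality I would exhibit a nontrivial one-parameter family inside $\stab^\mu(r)$ directly: translating $a$ by elements ``of the correct size pointing along $r$'' leaves the translate infinitesimally close to a realization of $r$, so $\stab^\mu(r)$, and hence $H_0 \supseteq \stab^\mu(r)$, has dimension $\ge 1$. Together these give that $H_0 = \stab_{\phi_0}(\mu \cdot r)$ is a one-dimensional definable subgroup of $G$, which is the first assertion with $\phi = \phi_0$.

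For unboundedness when $G$ is abelian (or nearly abelian), suppose toward a contradiction that $H_0$ is definably compact. Using that $G$ is abelian to pass to the quotient $G/H_0$ (or, in the nearly abelian case, to a definably compact definable subgroup $K \supseteq [G,G]$), together with the structure of definably compact abelian groups definable in $p$CF --- which up to finite index look like $\Z^k$ --- one finds that the infinitesimal translations recorded by $\mu \cdot r$ cannot carry $a$ off to infinity, contradicting the unboundedness of $r$. This is precisely where abelianness is used: for non-abelian $G$ the tangent direction at infinity can wind around a definably compact subgroup, so $\dim H_0 = 1$ together with $H_0$ definably compact (for instance $H_0 \cong \Z$, torsion-free yet compact) is not excluded without it --- in contrast to the o-minimal case, where a one-dimensional definably compact group must contain torsion.

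I expect the main obstacle to be the valuative-rate analysis underlying $\dim H_0 \le 1$: in the $p$-adic setting there is no canonical parametrization of a one-dimensional unbounded locus, so ``bounded movement in a fixed tangent direction'' must be defined and controlled purely through valuation growth (the ``max-gap'' data), and one must simultaneously arrange that $\phi_0$ is coarse enough for $\stab_{\phi_0}(\mu \cdot r)$ to be definable and fine enough to cut the stabilizer down to dimension one. The second substantial point is the unboundedness clause, which rests on the structure theory of definably compact abelian $p$-adic groups and has no counterpart obstruction in the o-minimal setting.
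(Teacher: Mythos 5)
The decisive gap is in your lower bound and unboundedness steps. You assert that ``translating $a$ by elements of the correct size pointing along $r$ leaves the translate infinitesimally close to a realization of $r$,'' and that this gives a one-parameter family inside $\stab^\mu(r)$ for \emph{arbitrary} $G$. That assertion is exactly the point where the o-minimal argument of Peterzil--Steinhorn breaks down in $p$CF and it is the technical heart of the paper, not a routine step: an unbounded $1$-dimensional definable set $I$ can a priori consist of ``islands'' $gO_{t_0}\cap I$ surrounded by ever larger empty annuli $g(O_t\setminus O_{t_0})$, so no left translation of any fixed size need carry a point of $I$ back into $I$; in the o-minimal case this is excluded by connectedness, but $M$ is totally disconnected. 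The paper has to exclude it by a completely different mechanism (Hypothesis~\ref{stuck}, Lemma~\ref{new-config} and Corollaries~\ref{gaps-1}, \ref{gaps-2}): if such gaps existed one could build chains of bounded and open \emph{externally definable} subgroups witnessing infinite dp-rank, contradicting finiteness of dp-rank in $p$CF. This is also precisely where the (near-)abelian hypothesis enters (normality of $X_0\supseteq K\supseteq [G,G]$ in Lemma~\ref{unbounded-I}), whereas your proposal locates the use of abelianness in a quotient argument relying on an unproved structure claim (``definably compact abelian groups look like $\Z^k$ up to finite index'') and on the unclear assertion that $\mu\cdot r$ ``cannot carry $a$ off to infinity.'' Without a substitute for the gap lemma, neither $\dim\ge 1$ in the general case nor unboundedness in the abelian case is established by your sketch.

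There are also gaps around the formula $\phi$. Your $\phi_0$ built from ``max-gap'' rate data is never actually defined, and the claim that $\stab_{\phi_0}(\mu\cdot r)$ is definable even when $r$ is not a definable type is unjustified: by Definition~\ref{stab-phi}, $\stab_\phi(\Sigma)$ is an intersection of stabilizers indexed by all parameters $b$, and its definability is derived from definability of $\Sigma$. Likewise, knowing $\stab^\mu(r)=\bigcap_\phi\stab_\phi(\mu\cdot r)$ sits inside a $1$-dimensional set does not by itself yield a single $\phi$ with $\dim\stab_\phi(\mu\cdot r)\le 1$; the paper gets this by passing to an $|M|^+$-saturated $N\succ M$, identifying $\stab(\mu^N\cdot r^N)$ with $\bigcap_{\psi\in r}\st_N^{\M}(\psi(\M)\beta^{-1})$ (Lemmas~\ref{useful-lemma} and \ref{2-equi}), using that standard parts of definable sets do not raise dimension (Fact~\ref{Fact-standard-part-dim}), and then extracting one $\phi$ from the directed intersection $\bigcap_\phi\stab_\phi(\mu\cdot r)(N)$ by saturation (Corollary~\ref{Cor-standard-part-dimension}, Remark~\ref{useful-rmk}); unboundedness of $\stab_\phi(\mu\cdot r)$ is then transferred from unboundedness of the sets $\st_N^{\M}(\psi(\M)\beta^{-1})$ (Lemma~\ref{unbounded-II}), which again rests on the gap lemma. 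Your ``tangent cone'' bookkeeping would need to be developed into something of this kind before the dimension bound and the definability of the chosen $\stab_\phi$ can be considered proved.
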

Our proofs of these theorems are based on the original proofs of
Peterzil and Steinhorn \cite{Y.-Peterzil-and-C.-Steinhorn}, though
several important changes are necessary.  First of all, the
$\mu$-stabilizer $\stab^\mu(r)$ is no longer definable, but merely
type-definable, as mentioned above.  For this reason, it is necessary
to compute the stabilizers in an $|M|^+$-saturated elementary
extension $N \succ M$.

A more serious problem arises when trying to generalize \cite[Lemma
  3.8]{Y.-Peterzil-and-C.-Steinhorn}.  This lemma, which is used to
show that $\stab^\mu(p) \ne \{\id_G\}$, roughly says the following: if
$I$ is a curve tending to infinity and $B$ is an annulus around
$\id_G$, then $g \cdot B \cap I \ne \emptyset$ for all $g \in I$.
This follows by a simple connectedness argument ($I$ is connected, so
it must cut across the annulus $g \cdot B$ on its way from $g$ to
infinity).  This argument fails critically in the totally disconnected
$p$-adic context.  In Section~\ref{sec:gaps} we develop an alternative
argument to replace \cite[Lemma 3.8]{Y.-Peterzil-and-C.-Steinhorn}.
Unfortunately, the argument only works properly in the abelian (or
near-abelian) case.

\subsection{Notation and conventions}

We shall assume a basic knowledge of model theory, including basic
notions such as definable types, saturation, heirs, and so on.  Good
references are \cite{M-book,P-book}. We refer to the excellent survey
\cite{Luc-Belair} as well as \cite{O-P, H-P-groups-in-local-fields}
for the model theory of the $p$-adic field $(\Q, +, \times, 0, 1)$. In
fact, \cite{O-P} and \cite{H-P-groups-in-local-fields} are also good
references for the model theoretic background required for the current
paper.

Let $T$ be a theory in some language $\la$.  We write $\M$ for a
monster model of $T$, in which every type over a small subset
$A\subseteq \M$ is realized, where ``small" means $|A|< \kappa$ for
some big enough cardinal $\kappa$.  The letters $M,N, M'$ and $ N'$
will denote small elementary submodels of $\M$.  We will use $x, y, z$
to mean arbitrary $n$-tuples of variables and $a, b, c \in \M$ to
denote $n$-tuples in $\M^n$ with $n\in \N$. Every formula is an
$\la_\M$-formula. For an $\la_M$-formula $\phi(x)$, $\phi(M)$ denotes
the definable subset of $M^{|x|}$ defined by $\phi$, and a set
$X\subseteq M^n$ is definable if there is an $\la_M$-formula $\phi(x)$
such that $X=\phi(M)$. If $M\prec N\prec \M$, and $X\subseteq
N^n$ is defined by a formula $\psi$ with parameters from $M$, then
$X(M)$ and $X(\M)$ will denote $\psi(M)$ and $\psi(\M)$
respectively; these are clearly definable subsets of $M^n$ and $\M^n$
respectively.

Following \cite[Definition~2.12]{Y.-Peterzil-and-S.-Starchenko}, we
say that a partial type $\Sigma$ is \emph{$A$-definable} or
\emph{definable over $A$} if for every formula $\phi(x;y)$, there is
an $\la_A$-formula $\psi(y)$ such that
\begin{equation*}
  \Sigma(x) \vdash \phi(x;b) \iff M \models \psi(b)
\end{equation*}
for all $b \in M$.  We will denote the formula $\psi(y)$ by $(d_\Sigma
x) \phi(x,y)$, thinking of $d_\Sigma$ as a quantifier.
The map $\phi(x;y) \mapsto (d_\Sigma x) \phi(x,y)$ is called the \emph{definition schema} of $\Sigma(x)$. 

If $\Sigma(x)$ is a definable partial type over $M$, and $N\succ M$,
then $\Sigma^N$ will denote the canonical extension of $\Sigma$ by definitions,
i.e., the following partial type over $N$:
\begin{equation*}
  \Sigma^N = \{\phi(x;a) \in \la_N \mid N \models (d_\Sigma x) \phi(x,a)\}.
\end{equation*}
When $p$ is a complete definable type over $M$, the canonical extension $p^N$ is the same thing as
the unique heir of $p$ over $N$.

For a definable set $D\sq M^n$, and $\phi(x)$ an $\la_\M$-formula, we
say that $\phi(x)$ is a $D$-formula if $\M\models \phi(x) \implies
x\in D(\M)$. A partial type $q(x)$ (over a small subset) is a $D$-type
if $q(x)\vdash x\in D(\M)$. We write $S_D(M)$ for the space of
complete $D$-types over $M$.

We consider $\Q$ as a structure in the language of rings
$\la = \la_r =\{+,\times,-,0,1\}$.  The valuation ring $\Z$ is definable in
$\Q$. The valuation group $(\mathbb Z,+,<)$ and the valuation
$v:\Q\ra\mathbb Z\cup\{\infty\} $ are interpretable.  A
\emph{$p$-adically closed field} is a model of $p\mathrm{CF} := \Th(\Q)$.  For
any $M\models p\mathrm{CF}$, $R(M)$ will denote the valuation ring, and
$\Gamma_M$ will denote the value group.  By \cite{Macintyre}, $p$CF
admits quantifier elimination after adjoining predicates $P_n$ for the
$n$-th power of the multiplicative group for all $n\in \N^+$.  The
theory $p$CF also has definable Skolem functions \cite{L-Dries-Skolem}.

The $p$-adic field $\Q$ is a locally compact topological field, with  basis given by
the sets
\[\B(a,n) = \{x\in \Q \mid x \neq a \wedge v(x - a) \geq n\}\]
for $a\in \Q$ and $n\in \mathbb Z$. The valuation ring $\Z$ is
compact. The topology is definable (as in Section \ref{adc} below), so it
extends to any $p$-adically closed field $M$, making $M$ a 
topological field (usually not locally compact).  Any definable set
$X\sq M^n$ has a topological dimension, denoted by $\dim(X)$, which is
the maximal $k\leq n$ such that the image of the projection $\pi :
X\ra M^n$; $(x_1, \ldots, x_n) \mapsto (x_{r_1} , \ldots, x_{r_k} )$ has
interior, for suitable $1 \leq r_1 < \cdots < r_k \leq n$. As model
theoretic algebraic closure coincides with the field-theoretic
algebraic closure, algebraic closure gives a pregeometry on $M$, and
the algebraic dimension $\dim_{\alg}(X)$ of $X$ can be calculated in
the usual way. The topological dimension coincides with the algebraic
dimension.

\subsection{Outline}
In Section~\ref{sec:dc}, we review the notion of definable
compactness, and how it behaves in definable manifolds and definable
groups in $p$CF.  In Section~\ref{sec:dp-rank-review} we review the theory of dp-rank, which is used in Section~\ref{sec:gaps}.  In Section~\ref{sec:gaps}, we prove a technical
statement about ``gaps'' in unbounded sets, which replaces the use of
connectedness in
Peterzil-Steinhorn \cite[Lemma 3.8]{Y.-Peterzil-and-C.-Steinhorn}.
In Section~\ref{stab-review}, we review the theory of stabilizers and
$\mu$-stabilizers from \cite{Y.-Peterzil-and-S.-Starchenko}.  Finally,
we prove the main theorems in Section~\ref{sec:main}.

\section{Definable compactness} \label{sec:dc}
In this section, we review the notion of \emph{definable compactness}
for definable manifolds and definable groups in $p$-adically closed
fields.  The treatment of ($p$-adic) definable compactness in the
literature is questionable, so we build up the theory from scratch,
out of an abundance of caution.

In~Section \ref{adc} we recall an abstract definition of definable
compactness, which behaves well in any definable topological space.
In the next two sections, we restrict our attention to $p$-adic
definable manifolds.  In~Section \ref{f-def} we show that our
definition agrees with the definition in the literature in terms of
curve completion.  In Section \ref{s-def} we give another
characterization using specialization of definable types.  Finally, in
Section~\ref{dc-gr} we list some consequences for definable groups.

\subsection{Abstract definable compactness} \label{adc}
Let $M$ be an arbitrary structure.
A \emph{definable topology} on a definable set $X \subseteq M^n$ is a
topology with a (uniformly) definable basis of opens.  A \emph{definable
topological space} is a definable set with a definable topology.
 
Recall that a topological space is compact if any filtered
intersection of non-empty closed sets is non-empty.
\begin{Def}\label{defdefc}
  Let $X$ be a definable topological space in a structure $M$.  Say
  that $X$ is \emph{definably compact} if the following holds: for any
  definable family $\mathcal{F} = \{Y_t : t \in T\}$ of non-empty
  closed sets $Y_t \subseteq X$, if $\mathcal{F}$ is downwards
  directed, then $\bigcap \mathcal{F} \ne \emptyset$.

  More generally, say that a definable set $Y \subseteq X$ is definably compact
  if it is definably compact with respect to the induced subspace topology.
\end{Def}
Definable compactness has many of the expected properties:
\begin{Fact} \label{defc} ~
\begin{enumerate}
\item \label{genuine} If $X$ is a compact definable topological space, then $X$ is
  definably compact.
\item If $X, Y$ are definably compact, then $X \times Y$ is definably
  compact.
\item If $f : X \to Y$ is definable and continuous, and $X$ is
  definably compact, then the image $f(X) \subseteq Y$ is definably
  compact.
\item \label{compact-closed} If $X$ is a Hausdorff definable topological space and $Y
  \subseteq X$ is definably compact, then $Y$ is closed.
\item \label{closed-compact} If $X$ is definably compact and $Y \subseteq X$ is closed and
  definable, then $Y$ is definably compact.
\item If $X$ is a definable topological space and $Y_1, Y_2 \subseteq X$ are definably compact, then $Y_1 \cup Y_2$ is definably compact.
\end{enumerate}
\end{Fact}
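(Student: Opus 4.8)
The plan is to establish Fact~\ref{defc} by proving the six closure properties more or less in the order listed, since several of the later ones depend on the earlier ones. Throughout, the key observation is that Definition~\ref{defdefc} is phrased purely in terms of definable downward-directed families of nonempty closed sets, so each property reduces to a manipulation of such families.

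\medskip

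\textbf{Property~\ref{genuine}.} If $X$ is compact in the genuine topological sense, any filtered intersection of nonempty closed sets is nonempty, and a definable downward-directed family $\mathcal F$ is in particular filtered; hence $\bigcap \mathcal F \ne \emptyset$. This is immediate.

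\textbf{Property 2 (products).} Given a definable downward-directed family $\{Z_t : t \in T\}$ of nonempty closed subsets of $X \times Y$, I would first project. The issue is that the projections $\pi_X(Z_t)$ need not be closed, so instead I use the standard tube/slice argument: for each $a \in \pi_X(Z_t)$ the slice $(Z_t)_a \subseteq Y$ is closed and nonempty, and the family of such slices over all $t$ is downward directed once we fix attention on a point in the intersection of the $X$-projections. Concretely, first apply definable compactness of $X$ to the family of closures $\overline{\pi_X(Z_t)}$ (which is still downward directed and consists of nonempty closed sets) to get a point $a$ in all of them; a small saturation/definability argument shows $a$ can be chosen in $\bigcap_t \pi_X(Z_t)$ itself, or alternatively one passes to the closed sets $Z_t \cap (\{a\} \times Y)$ after checking they are nonempty. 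Then apply definable compactness of $Y$ to the slices $(Z_t)_a$. This yields a common point $(a,b) \in \bigcap_t Z_t$.

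\textbf{Property 3 (continuous image).} Let $f : X \to Y$ be definable and continuous with $X$ definably compact, and let $\{Z_t\}$ be a definable downward-directed family of nonempty closed subsets of $f(X)$. Then $\{f^{-1}(Z_t)\}$ is a definable downward-directed family of nonempty sets in $X$; they are closed since $f$ is continuous. By definable compactness of $X$ there is $a \in \bigcap_t f^{-1}(Z_t)$, and then $f(a) \in \bigcap_t Z_t$.

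\textbf{Property~\ref{compact-closed} (compact $\Rightarrow$ closed in Hausdorff $X$).} Suppose $Y \subseteq X$ is definably compact, $X$ Hausdorff, and let $a \in \overline{Y} \setminus Y$; I will derive a contradiction. Using the definable basis, let $\{U_s : s \in S\}$ be the definable family of basic open neighborhoods of $a$. Then $\{Y \cap \overline{U_s} : s \in S\}$ — or rather a cofinal downward-directed sub-family obtained by taking finite intersections, which remains definable — consists of nonempty closed subsets of $Y$ (nonempty because $a \in \overline Y$; closed in the subspace topology on $Y$). By definable compactness of $Y$, there is a point $b$ in all of them. But $b \in Y$ while $b$ lies in every neighborhood of $a$, contradicting Hausdorffness since $b \ne a$. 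Some care is needed to make the family genuinely downward directed and definable; closing under finite intersection is the standard fix.

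\textbf{Property~\ref{closed-compact} (closed subset of compact).} If $X$ is definably compact and $Y \subseteq X$ is closed and definable, any definable downward-directed family of nonempty closed subsets of $Y$ is also a family of closed subsets of $X$ (closed in $Y$ closed in $X$ equals closed in $X$), so its intersection is nonempty by definable compactness of $X$.

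\textbf{Property 6 (finite unions).} Given a definable downward-directed family $\{Z_t : t \in T\}$ of nonempty closed subsets of $Y_1 \cup Y_2$, for each $t$ at least one of $Z_t \cap Y_1$, $Z_t \cap Y_2$ is nonempty. Partition $T = T_1 \sqcup (T \setminus T_1)$ where $T_1 = \{t : Z_t \cap Y_1 \ne \emptyset\}$; this is definable. By directedness, for any $t, t'$ there is $t''$ below both, and $Z_{t''} \cap Y_i \ne \emptyset$ forces (by $Z_{t''} \subseteq Z_t \cap Z_{t'}$) that $Z_t \cap Y_i$ and $Z_{t'}\cap Y_i$ are nonempty too; hence one of $T_1$, $T \setminus T_1$ contains a cofinal (hence still directed) subfamily. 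Say $T_1$ does. Then $\{Z_t \cap Y_1 : t \in T_1\}$ is a definable downward-directed family of nonempty closed subsets of $Y_1$, closed in $Y_1$; by definable compactness of $Y_1$ (using property~\ref{closed-compact} if one wants $Z_t \cap Y_1$ literally closed in $Y_1$) it has nonempty intersection, and any point there lies in $\bigcap_t Z_t$.

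\medskip

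I expect the main obstacle to be Property~2, specifically the step of producing a point in $\bigcap_t \pi_X(Z_t)$ rather than merely in $\bigcap_t \overline{\pi_X(Z_t)}$, together with the bookkeeping of keeping all auxiliary families definable and downward directed (the recurring "close under finite intersections" maneuver). The Hausdorff-to-closed argument in Property~\ref{compact-closed} is the second most delicate point, again because of the need to exhibit an honestly directed definable family of closed slices. The remaining items are essentially formal.
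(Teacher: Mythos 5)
The paper does not actually prove Fact~\ref{defc}: it is quoted from Fornasiero and from \cite[Section~3.1]{wj-o-minimal}, so there is no internal argument to compare against. Judged on its own terms, your treatment of items (1), (3), (5) and (6) is correct, and (4) is also fine; in fact (4) is slightly easier than you make it, since the family $\{Y \cap \overline{U_s} : a \in U_s\}$ is already downwards directed because $\{U_s\}$ is a basis (given $U_{s_1}, U_{s_2} \ni a$ pick a basic $U_{s_3} \subseteq U_{s_1} \cap U_{s_2}$ containing $a$), so no closure under finite intersections is needed; the contradiction then comes from the fact that disjoint open sets $V \ni b$, $W \ni a$ give $V \cap \overline{W} = \emptyset$.

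The genuine gap is exactly where you suspected, in item (2). Definable compactness of $X$ applies only to closed sets, so it yields a point $a \in \bigcap_t \overline{\pi_X(Z_t)}$, and neither of your two ways of proceeding from there is justified. First, you cannot upgrade this to $a \in \bigcap_t \pi_X(Z_t)$ by ``a small saturation/definability argument'': the statement must be proved in the given model $M$ (which may be $\Q$ itself, with no saturation available), and the claim that the projections have a common point is essentially equivalent to the conclusion being proved, so this step is circular as stated. Second, the fallback of passing to the slices $Z_t \cap (\{a\} \times Y)$ fails because for a point $a$ lying only in the closures of the projections these slices can all be empty, and then definable compactness of $Y$ gives nothing. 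The standard repair is a two-step argument: having found $a \in \bigcap_t \overline{\pi_X(Z_t)}$, consider the definable family, indexed by pairs $(t,s)$ with $U_s$ a basic open neighborhood of $a$, of the closed sets $\overline{\pi_Y\bigl(Z_t \cap (U_s \times Y)\bigr)} \subseteq Y$; each is non-empty because $a \in \overline{\pi_X(Z_t)}$, and the family is downwards directed since the $Z_t$ are directed and the $U_s$ form a basis at $a$. Definable compactness of $Y$ gives a point $b$ in the intersection, and then every basic box $U_s \times V$ around $(a,b)$ meets every $Z_t$, so $(a,b) \in \overline{Z_t} = Z_t$ for all $t$. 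With item (2) rewritten along these lines, the proposal becomes a complete proof.
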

Definition~\ref{defdefc} and Fact~\ref{defc} are due independently to
Fornasiero \cite{fornasiero} and the first author
\cite[Section~3.1]{wj-o-minimal}.

\begin{Rmk}\label{defc-ee}
  Suppose $X$ is a definable topological space in a structure $M$, and
  $N \succ M$.  Then $X(N)$ is naturally a definable topological space
  in the structure $N$, and $X(N)$ is definably compact if and only if
  $X$ is definably compact.  In other words, definable compactness is
  invariant in elementary extensions.
\end{Rmk}

\subsection{Definable compactness and definable manifolds in $p$CF} \label{f-def}
Let $M$ be a $p$-adically closed field with valuation group $\Gamma_M$.  Each power $M^n$ is a
definable topological space.  We first characterize definable
compactness for subsets of $M^n$.

\begin{Lemma}\label{k-cb}
  If $X \subseteq M^n$ is definably compact, then $X$ is closed and
  bounded.
\end{Lemma}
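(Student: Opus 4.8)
The plan is to prove the contrapositive in two halves: if $X$ is not bounded, then $X$ is not definably compact; and if $X$ is not closed, then $X$ is not definably compact. Both directions should exhibit an explicit downwards-directed definable family of nonempty closed subsets of $X$ with empty intersection.

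First I would handle unboundedness. Suppose $X \subseteq M^n$ is not bounded, meaning there is no $m \in \Gamma_M$ with $X \subseteq \{x : v(x_i) \geq m \text{ for all } i\}$. For each $t \in M$ with $v(t) = m$, set $Y_t = \{x \in X : v(x_i) \leq v(t) \text{ for some } i\}$; equivalently I can index by the value-group element $m$ directly, using the interpretable value group. Each $Y_t$ is definable, and it is closed in $X$ because the condition ``$v(x_i) \leq m$ for some $i$'' defines a set that is both open and closed in $M^n$ (a finite union of complements of basic balls), so its intersection with $X$ is closed in $X$. Each $Y_t$ is nonempty precisely because $X$ is unbounded: for every $m$ there is a point of $X$ with some coordinate of value $\leq m$. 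The family is downwards directed since $Y_t \cap Y_{t'} = Y_{t''}$ where $v(t'') = \min(v(t), v(t'))$. But $\bigcap_t Y_t = \{x \in X : x_i \text{ has value} \leq m \text{ for every } m, \text{ for some } i\} = \emptyset$, since no element of $M$ has value below every element of $\Gamma_M$ — here I am implicitly using only that $\Gamma_M$ has no least element, which holds since $\Gamma_M \models \Th(\mathbb{Z},+,<)$. Hence $X$ is not definably compact.

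Next I would handle non-closedness. Suppose $X$ is bounded but not closed, so there is a point $a \in \overline{X} \setminus X$. For each $n \in \mathbb{Z}$ (again using the interpretable value group as index set), let $Z_n = X \cap \{x : v(x - a) \geq n\} = X \cap \B(a,n) \cup \{a\}$-type ball; more carefully, $Z_n = \{x \in X : v(x_i - a_i) \geq n \text{ for all } i\}$. This is definable, and it is closed in $X$ since the ball $\{x : v(x_i - a_i) \geq n \ \forall i\}$ is a closed (indeed clopen) subset of $M^n$. It is nonempty for every $n$ because $a$ is in the closure of $X$, so every neighborhood of $a$, in particular this ball, meets $X$. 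The family $\{Z_n\}_{n \in \mathbb{Z}}$ is downwards directed: $Z_n \cap Z_{n'} = Z_{\max(n,n')}$. And $\bigcap_n Z_n = \{x \in X : v(x_i - a_i) = \infty \ \forall i\} = \{a\} \cap X = \emptyset$ since $a \notin X$. So again $X$ fails to be definably compact.

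The main obstacle — really the only subtlety — is making sure the index sets of these families are genuinely definable families in the sense of Definition~\ref{defdefc}, i.e.\ uniformly definable and parametrized by a definable set $T$. Since the value group and valuation are interpretable in $M$, I can take $T = \Gamma_M$ (or $T = M \setminus \{0\}$ indexing by value), and the defining condition ``$v(x_i) \leq v(t)$ for some $i$'' respectively ``$v(x_i - a_i) \geq v(t)$ for all $i$'' is a single formula in $x$ with parameter $t$ (and parameter $a$), so the family is definable. The directedness and the emptiness of the intersection are then immediate from the order structure on $\Gamma_M$ and the fact that $\Gamma_M$ has no least element and that $v(c) = \infty$ only for $c = 0$. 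One should also note that we may work in $M$ itself rather than passing to a saturated extension, since definable compactness is an elementary-extension-invariant notion (Remark~\ref{defc-ee}), though in fact the argument above is uniform and needs no saturation at all.
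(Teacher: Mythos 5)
Your argument is correct and is essentially the paper's own proof: the paper applies definable compactness to the downwards-directed family of complements $X \setminus \B(0,v(t))^n$ (indexed by $t \in M\setminus\{0\}$) to get boundedness, and notes that closedness ``follows similarly, or by Fact~\ref{defc}(\ref{compact-closed})'' --- your two contrapositive halves are exactly these arguments written out. The only microscopic slip is that in the unbounded case the intersection is literally $\{x \in X : \forall m\,\exists i\; v(x_i)\le m\}$ rather than with the quantifiers swapped, but since $x$ has only finitely many coordinates this set is still empty (choose $m$ below every finite value $v(x_i)$), so nothing is affected.
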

\begin{proof}
  For $t \in M \setminus \{0\}$, let $O_t$ be the $n$-dimensional ball
  $\B(0,v(t))^n$.  Each $O_t$ is clopen in $M^n$.  Therefore $\{X
  \setminus O_t : t \in M \setminus \{0\}\}$ is a downwards-directed definable
  family of closed subsets of $X$, with empty intersection.  By
  definable compactness, there is some $t$ such that $X \setminus O_t
  = \emptyset$, or equivalently, $X \subseteq O_t$.  Then $X$ is
  bounded.

  Closedness follows similarly, or by
  Fact~\ref{defc}(\ref{compact-closed}).
\end{proof}
\begin{Lemma}\label{cb-k}
  If $X \subseteq M^n$ is closed and bounded, then $X$ is definably
  compact.
\end{Lemma}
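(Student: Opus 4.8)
The plan is to reduce to the special case $X = R(M)^n$ and then run a profinite König's‑lemma argument, using saturation to convert an infinite branch of discs into an honest point of $M^n$.

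First I would arrange that $M$ is $\aleph_1$-saturated. By Remark~\ref{defc-ee}, replacing $M$ by an $\aleph_1$-saturated elementary extension $N$ changes neither the conclusion ($X$ definably compact $\iff$ $X(N)$ definably compact) nor the hypothesis, since ``$X$ is bounded'' and ``$X$ is closed'' are each expressed by a first-order sentence over the base: boundedness says $\exists\gamma\in\Gamma_M\ \forall x\in X\ \bigwedge_i v(x_i)\geq\gamma$, and closedness says the complement of $X$ is open, which is first-order because the topology has a uniformly definable basis. So I may assume $M$ is $\aleph_1$-saturated. Next, since $X$ is bounded there is $c\in M^\times$ with $X\subseteq (cR(M))^n$, and multiplication by $c^{-1}$ in each coordinate is a definable homeomorphism of $M^n$ carrying $(cR(M))^n$ onto $R(M)^n$ and $X$ onto a closed definable $X'\subseteq R(M)^n$. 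Definable homeomorphisms preserve definable compactness (clear from the definition, or by Fact~\ref{defc}(3)), and by Fact~\ref{defc}(\ref{closed-compact}) a closed definable subset of a definably compact space is definably compact; so it suffices to prove that $R(M)^n$ is definably compact.

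For this, let $\{Y_t : t\in T\}$ be a downwards-directed definable family of non-empty closed subsets of $R(M)^n$. For each $k\in\N$ let $\pi_k : R(M)^n \to (R(M)/p^kR(M))^n$ be reduction mod $p^k$; this is a definable surjection onto a finite set, its fibres are the cosets of $p^kR(M)^n$, and the $\pi_k$ are compatible as $k$ grows. For fixed $k$ the sets $\pi_k(Y_t)$, $t\in T$, form a downwards-directed family of non-empty subsets of a finite set, so $G_k := \bigcap_{t\in T}\pi_k(Y_t)\neq\emptyset$, and the transition maps carry $G_{k+1}$ into $G_k$. Thus $(G_k)_{k\in\N}$ is an inverse system of non-empty finite sets, whose inverse limit is therefore non-empty; fix a compatible family $\bar g_k\in G_k$ and put $B_k := \pi_k^{-1}(\bar g_k)$. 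The $B_k$ are a decreasing chain of cosets of the $p^kR(M)^n$, and each $B_k$ meets every $Y_t$ because $\bar g_k\in\pi_k(Y_t)$. The partial type $\{x\in B_k : k\in\N\}$ over $M$ is finitely satisfiable (the $B_k$ are nested and non-empty), so by $\aleph_1$-saturation it has a realization $z\in\bigcap_k B_k$. The cosets $B_k = z + p^kR(M)^n$ form a neighbourhood basis of $z$; since each meets the closed set $Y_t$, we get $z\in Y_t$ for every $t$, so $\bigcap_{t\in T}Y_t\neq\emptyset$, as required.

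The one genuinely $p$-adic difficulty — the reason this is not immediate from the real case — is that a nested sequence of discs of radii tending to infinity need not have a common point, since $p$-adically closed fields need not be spherically complete. Passing to an $\aleph_1$-saturated model via Remark~\ref{defc-ee} is exactly what removes this obstacle: it forces the countable nested family $\{B_k\}$ to have non-empty intersection. Everything else is routine bookkeeping — checking that the relevant properties transfer along the elementary extension, and the elementary facts about the coset decomposition of $R(M)^n$.
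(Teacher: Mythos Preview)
There is a genuine gap in the final step. Once you pass to an $\aleph_1$-saturated model $M$, the value group $\Gamma_M$ is a nonstandard $\mathbb{Z}$-group and in particular properly contains $\mathbb{Z}$. Consequently the balls $B_k = z + p^k R(M)^n$ for $k\in\mathbb{N}$ do \emph{not} form a neighbourhood basis of $z$: for any $\gamma\in\Gamma_M$ with $\gamma>k$ for every $k\in\mathbb{N}$, the ball of valuative radius $\gamma$ about $z$ is strictly contained in every $B_k$. So from $B_k\cap Y_t\neq\emptyset$ for all $k\in\mathbb{N}$ you cannot conclude $z\in Y_t$. Concretely, with $n=1$ and the family $Y_\gamma = \{x\in R(M):v(x)\geq\gamma\}$ for $\gamma\in\Gamma_M$, your construction gives $G_k=\{0\}$, $B_k=p^kR(M)$, and saturation hands you some $z$ with $v(z)>k$ for all $k\in\mathbb{N}$; but nothing forces $z=0$, and for any $\gamma>v(z)$ you have $z\notin Y_\gamma$. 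The very move that guarantees $\bigcap_k B_k\neq\emptyset$ (saturation) simultaneously creates radii that your tree of standard-level cosets cannot detect.

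The paper goes in the opposite direction: rather than saturate, it observes that ``every closed bounded definable set is definably compact'' is a scheme of first-order $\la$-sentences (one for each pair of formulas parametrising $X$ and the directed family), hence transfers from $\Q$ to $M$ by elementary equivalence; over $\Q$ closed bounded sets are genuinely compact, and therefore definably compact by Fact~\ref{defc}(\ref{genuine}). Your profinite argument would in fact work cleanly over $\Q$ without any saturation --- there $\bigcap_k B_k$ is a singleton and the $B_k$ really are a neighbourhood basis --- so one repair is simply to run your K\"onig's-lemma argument in $\Q$ and then transfer.
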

\begin{proof}
  Equivalently, if $\{Y_t\}$ is a downwards-directed definable family 
  of non-empty, closed, bounded sets, then $\bigcap_t Y_t \ne 
  \emptyset$.  This claim can be expressed as a countable conjunction 
  of $\la$-sentences.  (We need infinitely many sentences because 
  there is no bound on the complexity of the definable family 
  $\{Y_t\}$.)  As a countable conjunction of $\la$-sentences, the 
  claim holds in $M$ if and only if it holds in $\Q$.  Therefore, we may assume 
  that $M = \Q$.  In this 
  case, the set $X$ will be compact, and hence definably compact by
  Fact~\ref{defc}(\ref{genuine}).
\end{proof}

\begin{Def}
  Let $X$ be a definable topological space.  A
  \emph{$\Gamma$-exhaustion} is a definable family $\{W_\gamma \mid
  \gamma \in \Gamma_M\}$ such that
  \begin{itemize}
  \item Each $W_\gamma$ is an open, definably compact subset of $X$.
    In particular, $W_\gamma$ is clopen.
  \item If $\gamma \le \gamma'$, then $W_\gamma \subseteq
    W_{\gamma'}$.
  \item $X = \bigcup_{\gamma \in \Gamma_M} W_\gamma$.
  \end{itemize}
\end{Def}

\begin{Lemma}\label{xh-1}
  If $U \subseteq M^n$ is definable and open, then $U$ has a
  $\Gamma$-exhaustion.
\end{Lemma}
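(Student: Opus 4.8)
The plan is to cover $U$ by an increasing definable family of clopen, bounded subsets, each built as a union of polydiscs of a fixed non-negative radius. For $x \in M^n$ and $\delta \in \Gamma_M$ write $\B(x,\delta) = \{ y \in M^n : v(y_i - x_i) \ge \delta \text{ for all } i \}$, a clopen polydisc, and extend $v$ to tuples by $v(x) = \min_i v(x_i)$. For $\gamma \in \Gamma_M$ set $\delta = \max(\gamma, 0)$ and define
\[
  W_\gamma = \bigcup \left\{ \B(x, \delta) : x \in M^n,\ v(x) \ge -\delta,\ \B(x,\delta) \subseteq U \right\} .
\]
This family is uniformly definable: ``$\B(x,\delta) \subseteq U$'' is the first-order condition $\forall z\, \big( (\bigwedge_i v(z_i - x_i) \ge \delta) \to z \in U \big)$, and $\delta = \max(\gamma, 0)$ is definable in the Presburger structure interpreted on $\Gamma_M$.

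First I would check that each $W_\gamma$ is a clopen, definably compact subset of $U$. It is visibly open, being a union of open polydiscs, and it is contained in $U$. It is a union of polydiscs all of the common radius $\delta$; since any two such polydiscs are equal or disjoint, and since for $z \notin W_\gamma$ the polydisc $\B(z,\delta)$ misses $W_\gamma$ (a short ultrametric computation), the complement $M^n \setminus W_\gamma$ is again open, so $W_\gamma$ is clopen in $M^n$. It is bounded: if $y \in \B(x,\delta)$ with $v(x) \ge -\delta$ then $v(y) \ge \min(\delta, v(x)) \ge \min(\delta, -\delta) = -\delta$ using $\delta \ge 0$, so $W_\gamma \subseteq \B(0,\delta)^n$. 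By Lemma~\ref{cb-k}, $W_\gamma$ is definably compact as a subspace of $M^n$, hence as a subspace of $U$; together with openness in $U$ this makes $W_\gamma$ a clopen, definably compact subset of $U$, as required.

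Next I would verify monotonicity and the covering property. If $\gamma \le \gamma'$, with clamped radii $\delta \le \delta'$, and $y \in \B(x,\delta)$ for some $x$ with $v(x) \ge -\delta$ and $\B(x,\delta) \subseteq U$, then $v(y) \ge -\delta \ge -\delta'$ and $\B(y,\delta') \subseteq \B(y,\delta) = \B(x,\delta) \subseteq U$, so $y \in W_{\gamma'}$; hence $W_\gamma \subseteq W_{\gamma'}$. For the covering, given $x \in U$ choose $\rho \in \Gamma_M$ with $\B(x,\rho) \subseteq U$ (possible since $U$ is open) and put $\gamma = \max(0,\, -v(x),\, \rho)$ (interpreting $-v(0) = -\infty$); then for $\delta = \gamma$ we have $v(x) \ge -\delta$ and $\B(x,\delta) \subseteq \B(x,\rho) \subseteq U$, so $x \in \B(x,\delta) \subseteq W_\gamma$. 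Thus $U = \bigcup_{\gamma \in \Gamma_M} W_\gamma$ and $\{W_\gamma\}_{\gamma \in \Gamma_M}$ is a $\Gamma$-exhaustion of $U$.

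The argument is mostly bookkeeping; the one genuine subtlety, and the point where a naive choice of the $W_\gamma$ fails, is that the polydiscs must be taken of non-negative radius (hence the clamp $\delta = \max(\gamma,0)$): a polydisc of very negative radius is too large, which would destroy both the openness of $W_\gamma$ (points of $\B(x,\delta)$ could leave $W_\gamma$) and the uniform bound placing $W_\gamma$ inside $\B(0,\delta)^n$.
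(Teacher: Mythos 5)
Your proof is correct and takes essentially the same approach as the paper: the paper's $W_\gamma$ is the set of $x$ with $\B(x,\gamma)\subseteq U$ and $\bar 0\in\B(x,-\gamma)$, which (for $\gamma\ge 0$) coincides with your union-of-polydiscs description, and both arguments then check clopenness, boundedness via Lemma~\ref{cb-k}, monotonicity, and covering in the same way (the clamp $\delta=\max(\gamma,0)$ is a harmless variant the paper avoids by allowing the family to be small or empty for negative $\gamma$). One minor slip: from $v(y)\ge-\delta$ you should conclude $W_\gamma\subseteq\B(0,-\delta)^n$ rather than $\B(0,\delta)^n$, which does not affect the boundedness conclusion.
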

\begin{proof}
  For any $\bar{x} = (x_1,\ldots,x_n) \in M^n$ and $\gamma \in
  \Gamma_M$, let $\B(\bar{x},\gamma)$ denote the ball of valuative
  radius $\gamma$ around $\bar{x}$, i.e., $\prod_{i = 1}^n
  \B(x_i,\gamma)$.

  Let $W_{\gamma}$ be the set of $x \in U$ such that $\B(x,\gamma)
  \subseteq U$ and $\bar{0} \in \B(x,-\gamma)$.  We claim that the
  family $W_{\gamma}$ is a $\Gamma$-exhaustion.

  First of all, for all $x'$ sufficiently close to $x$, we have
  $\B(x,\gamma) = \B(x',\gamma)$ and $\B(x,-\gamma) = \B(x',-\gamma)$, 
  and so $x \in W_\gamma \iff x' \in W_\gamma$.  Therefore
  $W_\gamma$ is clopen.  Additionally,
  \begin{equation*}
    x \in W_\gamma \implies \bar{0} \in \B(x,-\gamma) \iff x \in
    \B(\bar{0},-\gamma).
  \end{equation*}
  Therefore $W_\gamma$ is bounded.  By Lemma~\ref{cb-k}, $W_\gamma$ is
  definably compact.

  If $\gamma' \ge \gamma$, then $\B(x,\gamma') \subseteq
  \B(x,\gamma)$ and $\B(x,-\gamma') \supseteq \B(x,-\gamma)$.
  Therefore
  \begin{equation*}
    x \in W_\gamma \implies x \in W_{\gamma'},
  \end{equation*}
  and the family $\{W_\gamma\}$ is monotone.

  Lastly, if $x \in U$, then for sufficiently large $\gamma$, we have
  $\B(x,\gamma) \subseteq U$, because $U$ is open.  Also, $\bar{0} \in
  \B(x,-\gamma)$ for sufficiently large $\gamma$.  Thus $x \in
  W_\gamma$ for all sufficiently large $\gamma$.  This shows $U =
  \bigcup_\gamma W_\gamma$.
\end{proof}

An \emph{$n$-dimensional definable manifold} over $M$ is a Hausdorff
definable topological space $X$ with a covering by finitely may open
subsets $U_1$,\ldots,$U_m$, and a definable homeomorphism from $U_i$ to
an open set $V_i \sq M^n$ for each $i$.

\begin{Prop} \label{xh-2}
  Let $X$ be a definable manifold in $M$.  Then $X$ has a $\Gamma$-exhaustion.
\end{Prop}

\begin{proof}
  Cover $X$ with finitely many open sets $U_i$ homeomorphic
  to open subsets of $M^n$.  For each $i$, let
  $\{W_{i,\gamma}\}_{\gamma \in \Gamma_M}$ be a $\Gamma$-exhaustion of
  $U_i$.  Let $V_\gamma = \bigcup_i W_{i,\gamma}$.  Then the family
  $\{V_\gamma\}$ is a $\Gamma$-exhaustion of $X$.
\end{proof}

\begin{Def}\label{def-bounded}
  Let $X$ be a definable manifold.  An arbitrary subset $Y \subseteq X$ is \emph{bounded} if $Y \subseteq D$ for some definably compact subset $D \subseteq X$.
\end{Def}
Proposition~\ref{prop-bounded}(\ref{pb-1}) gives a more concrete definition of ``bounded'' in terms of $\Gamma$-exhaustions.
\begin{Prop}\label{prop-bounded}
  Let $X$ be a definable manifold and $Y \subseteq X$ be an arbitrary subset.
  \begin{enumerate}
      \item \label{pb-1} Let $\{W_\gamma\}$ be a $\Gamma$-exhaustion of $X$.  Then $Y$ is bounded if and only if there is $\gamma \in \Gamma$ such that $Y \subseteq W_\gamma$.
      \item \label{pb-2} Suppose $Y$ is definable.  Then $Y$ is definably compact if and only if $Y$ is closed and bounded.
      \item \label{pb-3} Suppose $Y$ is definable.  Then $Y$ is bounded if and only if the closure $\overline{Y}$ is definably compact.
  \end{enumerate}
\end{Prop}
\begin{proof}
  ~
  \begin{enumerate}
      \item If $Y \subseteq W_\gamma$, then $Y$ is contained in the definably compact set $W_\gamma$.  Conversely, suppose $Y$ is bounded, witnessed by a definably compact set $Z \subseteq X$ with $Y \subseteq Z$.  The filtered intersection
      \begin{equation*}
      \bigcap_\gamma (Z \setminus W_\gamma)
      \end{equation*}
      is empty, so there is some $\gamma$ such that $W_\gamma \supseteq Z \supseteq Y$.
      \item If $Y$ is definably compact, then $Y$ is closed (Fact~\ref{defc}(\ref{compact-closed}), and $Y$ is bounded because $Y \subseteq Y$.  Conversely, suppose that $Y$ is closed and bounded.  Then $Y$ is a definable closed subset of a definably compact set, so $Y$ is definably compact by Fact~\ref{defc}(\ref{closed-compact}).
      \item If $\overline{Y}$ is definably compact, then $Y$ is bounded because $Y \subseteq \overline{Y}$.  Conversely, suppose that $Y$ is bounded.  Then $Y \subseteq Z$ for some definably compact set $Z \subseteq X$.  The cosure $\overline{Y}$ is a definable closed subset of $Z$, so $\overline{Y}$ is definably compact by Fact~\ref{defc}(\ref{closed-compact}). \qedhere
  \end{enumerate}
\end{proof}

%

\begin{Rmk} \label{sketchy1}
  Definable compactness is a definable property:
  Let $X_t$ be a definable manifold depending definably on some
  parameter $t \in T$.  Then
  \begin{equation*}
    \{t \in T : X_t \text{ is definably compact}\}
  \end{equation*}
  is definable.  This can be proved from Proposition~\ref{prop-bounded}(\ref{pb-1},\ref{pb-2})
  by
  compactness, using Remark~\ref{defc-ee} to reduce to the case where
  $M$ is highly saturated. 
\end{Rmk}

\begin{Rmk}
  When $M = \Q$, a definable manifold $X$ is definably compact if and only
  if it is compact.  One direction is Fact~\ref{defc}(\ref{genuine}).
  Conversely, suppose $X$ is definably compact.  Cover $X$ by
  definable open subsets $U_1, \ldots, U_n$, each homeomorphic to an
  open subset of $M^n$.  As in the proof of Proposition~\ref{xh-2}, let
  $\{W_{i,\gamma}\}_{\gamma \in \mathbb{Z}}$ be a $\Gamma$-exhaustion of
  $U_i$, and let $V_\gamma = \bigcup_{i = 1}^n W_{i,\gamma}$, so that
  $\{V_\gamma\}_{\gamma \in \mathbb{Z}}$ is a $\Gamma$-exhaustion of $X$.  By
  Proposition~\ref{prop-bounded},
  there is some $\gamma \in \mathbb{Z}$ such that $X
  = V_\gamma$.  Then $X = \bigcup_{i = 1}^n W_{i,\gamma}$, where each
  $W_{i,\gamma}$ is definably compact.  Lemmas~\ref{k-cb} and
  \ref{cb-k} imply that definable compactness is equivalent to
  compactness for definable subsets of $M^n$.  Therefore each
  $W_{i,\gamma}$ is compact.  As $X$ is covered by finitely many
  compact sets, $X$ itself is compact.
\end{Rmk}

We now try to relate our notion of definable compactness to the more
familiar notions appearing in \cite{O-P}.

\begin{Def}
  Let $X$ be a definable manifold.  Let $D$ be a definable subset of
  $M \setminus \{0\}$ with $0 \in \overline{D}$.  Let $f : D \to X$ be
  a definable function.  Then $a \in X$ is a \emph{cluster point} of
  $f$ if $(0,a)$ is in the closure of the graph of $f$.  In other
  words, for every neighborhood $U_1$ of 0 and every neighborhood
  $U_2$ of $a$, there is $x \in U_1 \cap D$ such that $f(x) \in U_2$.
\end{Def}

\begin{Lemma} \label{stupid-lemma}
  let $X$ be a definable manifold.  Let $f : R(M) \setminus \{0\} \to
  X$ be a definable function.  Then $f$ is continuous at all but
  finitely many points of $R(M)$.
\end{Lemma}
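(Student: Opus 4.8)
The plan is to reduce the statement to a classical fact about definable functions of one $p$-adic variable, namely that a definable function $f$ from a subset of $R(M)$ to $M$ is continuous off a finite set. This is a standard consequence of cell decomposition / the Jacobian properties of $p$CF (or of $p$-adic o-minimality-style results); alternatively it can be extracted from the quantifier elimination of Macintyre and the fact that the set of discontinuity points of a definable function is definable and, in one variable, cannot contain an open set (since a definable function is locally given by a term in $x$ and the $P_n$-predicates, hence locally continuous on a dense open). So for a function into $M^n$, apply this coordinatewise, and then observe that the finite union of the (finite) discontinuity sets of the coordinate functions is still finite.

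To pass from $M = \Q$ (or from arbitrary $M$) and from $X = M^n$ to an arbitrary definable manifold $X$, I would argue as follows. First I would cover $X$ by finitely many definable open charts $U_1,\dots,U_m$, with definable homeomorphisms $\phi_i : U_i \to V_i \subseteq M^n$. For each $i$, let $D_i = f^{-1}(U_i) \subseteq R(M)\setminus\{0\}$; this is definable. The key point is that continuity of $f$ at a point $a$ with $f(a) \in U_i$ is a local property: since $U_i$ is open in $X$, $f$ is continuous at $a$ if and only if the composite $\phi_i \circ f$, defined on the definable set $D_i$, is continuous at $a$ (and every $a \in D_i$ has $f(a) \in U_i$, so the charts genuinely cover the domain). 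Now $\phi_i \circ f : D_i \to M^n$ is a definable function of one $p$-adic variable, so by the coordinatewise argument above it is continuous at all but finitely many points of $D_i$, hence at all but finitely many points where $f$ lands in $U_i$. Taking the union over the finitely many charts $i = 1,\dots,m$, we conclude that $f$ is continuous at all but finitely many points of $R(M)$.

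There is one subtlety I would want to be careful about: a point $a$ could lie in the closure of $D_i$ without lying in $D_i$ (i.e.\ $f(a) \in U_j \setminus U_i$ for some other chart), and near such a point the behaviour of $\phi_i \circ f$ is irrelevant — but this causes no trouble, because I only need continuity of $f$ at points $a$ where $f(a)$ actually lies in some chart $U_i$, and there I may legitimately test continuity using that chart. So the discontinuity set of $f$ is contained in $\bigcup_{i=1}^m \{a \in D_i : \phi_i \circ f \text{ is discontinuous at } a\}$, a finite union of finite sets.

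The main obstacle is really just the one-variable fact that a definable $f : D \to M$ with $D \subseteq R(M)$ is continuous off a finite set; everything else is bookkeeping. If this has not been recorded earlier, I would prove it by noting that the set of discontinuity points is definable, that in the $p$-adic setting a definable subset of $M$ is either finite or has interior, and that $f$ cannot be discontinuous on an open set: by cell decomposition (or the existence of definable Skolem functions plus Macintyre's quantifier elimination) $f$ agrees on a neighbourhood of a generic point with a continuous function (a composition of field operations and $n$-th root selections on the relevant $P_n$-cosets), contradicting discontinuity on an open set. Hence the discontinuity set is finite.
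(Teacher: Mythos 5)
Your overall route is the one the paper intends (its proof is a one-line exercise citing the fact that a definable function $M \to M^n$ is continuous off a finite set), but the chart bookkeeping contains a genuine gap. You assert that for $a \in D_i = f^{-1}(U_i)$, continuity of $f$ at $a$ is \emph{equivalent} to continuity of $\phi_i \circ f$ restricted to $D_i$ at $a$, and hence that the discontinuity set of $f$ is contained in $\bigcup_{i}\{a \in D_i : \phi_i \circ f|_{D_i} \text{ discontinuous at } a\}$. The direction you need is false when $D_i$ is not a neighborhood of $a$ in the domain: continuity of the restriction says nothing about nearby points that $f$ sends outside $U_i$. Concretely, let $X$ be the projective line over $M$ with its two standard charts $U_1 = X \setminus \{\infty\}$, $U_2 = X \setminus \{0\}$, fix $a$ in the domain, and set $f(x) = 0$ if $x - a \in P_2$ and $f(x) = \infty$ otherwise. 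Then $f$ is discontinuous at $a$ (both values occur arbitrarily close to $a$), yet $\phi_1 \circ f$ is constant on $D_1 = \{x : x - a \in P_2\} \cup \{a\}$ and $a \notin D_2$, so your union of restricted discontinuity sets misses $a$. (The conclusion of the lemma still holds here, of course; it is the containment step that fails.) Your ``subtlety'' paragraph addresses a different and harmless issue, and the sentence ``there I may legitimately test continuity using that chart'' is exactly the unjustified step.

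The gap is repairable, and the repair is where the one-variable nature of the domain gets used a second time: each $D_i$ is a definable subset of $M$, and in $p$CF a definable subset of the line has finite frontier (e.g.\ because the frontier of a definable set has strictly smaller dimension, so here dimension $0$, hence is finite). Add these finitely many frontier points, over all $i$, to the exceptional set. At any remaining point $a$ of the domain, choosing $i$ with $f(a) \in U_i$ we get $a$ in the interior of $D_i$, so $D_i$ \emph{is} a neighborhood of $a$ in the domain; then continuity of $\phi_i \circ f$ on $D_i$ at $a$ (which your coordinatewise one-variable argument gives off a finite set) genuinely yields continuity of $f$ at $a$, and the finite union over the charts finishes the proof.
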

\begin{proof}
  An exercise using the fact that any definable function $M \to M^n$
  is continuous off a finite set.
\end{proof}

\begin{Lemma}\label{defs-agree}
  Let $X$ be a definable manifold.  Let $Y$ be a definable subset.
  The following are equivalent:
  \begin{enumerate}
  \item $Y$ is definably compact.
  \item \label{defs-agree-strange} If $D$ is a definable subset of $M \setminus \{0\}$ with $0
    \in \overline{D}$, then every definable function $f : D \to Y$ has
    a cluster point.
  \item \label{da-3} Any definable continuous function $f : R(M) \setminus \{0\}
    \to Y$ has a cluster point in $Y$.
  \item Any definable continuous function $f : \B(0,\gamma) \setminus
    \{0\} \to Y$ has a cluster point in $Y$.
  \item Let $\{Z_\gamma\}_{\gamma \in \Gamma_M}$ be a definable family of non-empty closed subsets of $Y$, such that $\gamma \le \gamma' \implies Z_{\gamma} \supseteq Z_{\gamma'}$.  Then $\bigcap_{\gamma \in \Gamma_M} Z_\gamma \ne \emptyset$.
  \end{enumerate}
\end{Lemma}
\begin{proof}
  (1)$\Rightarrow$(2): the set of cluster points is the intersection
  \begin{equation*}
    \bigcap_{\gamma \in \Gamma_M} \overline{f(\B(0,\gamma) \cap D)}.
  \end{equation*}
  This is non-empty by definable compactness of $Y$.

  (2)$\Rightarrow$(3) is trivial, and (3)$\Rightarrow$(4) follows by rescaling.

  (4)$\Rightarrow$(5): By definable Skolem functions, there is some definable function $f : M \setminus \{0\} \to Y$ such that $f(x) \in Z_{v(x)}$ for all $x \in M \setminus \{0\}$.  By Lemma~\ref{stupid-lemma}, there is some $\delta \in \Gamma_M$ such that $f$ is continuous on $\B(0,\delta) \setminus \{0\}$.  By (4), $f$ has a cluster point $a \in Y$.  Then $a \in \bigcap_{\gamma} Z_\gamma$.  Otherwise, take $\gamma$ large enough that $a \notin Z_\gamma$.  Because $a$ is a cluster point and $Z_\gamma$ is closed in $Y$, there is some $x \ne 0$ such that $v(x) \ge \gamma$ and $f(x) \notin Z_\gamma$.  By choice of $f$, $f(x) \in Z_{v(x)} \subseteq Z_\gamma$, a contradiction.
  
  (5)$\Rightarrow$(1):  We first claim that $Y$ is closed.  Take $p \in \overline{Y}$.  Because $X$ is a definable manifold, we can identify a neighborhood of $p$ in $X$ with the closed ball $R(M)^n$ in $M^n$.  For $\gamma \ge 0$, let $B_\gamma$ be the closed ball of radius $\gamma$ around $p$.  For $\gamma \le 0$ let $B_\gamma = B_0$.  Then $B_\gamma \cap Y$ is a non-empty closed subset of $Y$ for any $\gamma$, because $p \in \overline{Y}$.  By (4), the intersection $\bigcap_{\gamma} (B_\gamma \cap Y)$ is non-empty, and so $p \in Y$.  Therefore $Y$ is closed.
  
  Similarly, $Y$ is bounded.  Take a $\Gamma$-exhaustion $\{U_\gamma\}_{\gamma \in \Gamma_M}$ of the definable manifold $X$.  If $Y$ is unbounded, then $Y \setminus U_\gamma$ is a closed non-empty subset of $Y$ for each $\gamma$.  Applying (5) to the family of sets $Y \setminus U_\gamma$, we see that $Y \not \subseteq \bigcup_{\gamma} U_\gamma = X$, a contradiction.  Therefore $Y$ is closed and bounded.  By Proposition~\ref{prop-bounded}(\ref{pb-2}), $Y$ is definably compact. 
\end{proof}

Therefore,  we could alternatively define
definable compactness as follows:
\begin{Prop}\label{fixed-def}
  
  Let $Y$ be a definable subset of a definable manifold $X$.  Then $Y$ is definably compact if and only if every 
  definable continuous function $f : R(M) \setminus \{0\} \to Y$ has a
  cluster point.
\end{Prop}
This is essentially the definition of ``definable compactness''
appearing in \cite{O-P} (with the mistake fixed).

\subsection{Definable compactness and definable 1-dimensional types} \label{s-def}

Suppose that $N \succ M$.  Let $X$ be a definable manifold in $M$.
\begin{Def}
  For $a \in X(M)$ and $b \in X(N)$, say that $a$ and $b$ are
  \emph{infinitesimally close} over $M$ if $b$ is contained in every
  $M$-definable neighborhood of $a$.
\end{Def}
Suppose that $X, Y$ are $M$-definable manifolds and $f : X \to Y$ is
an $M$-definable continuous function.  If $a \in X(M)$ is
infinitesimally close to $b \in X(N)$, then $f(a)$ is infinitesimally
close to $f(b)$.
\begin{Def} ~ \label{o-st-def}
  \begin{itemize}
  \item We let $\mathcal{O}_{X(M)}(N)$ denote the set of $b \in X(N)$
    such that $b$ is infinitesimally close to at least one $a \in
    X(M)$.
  \item There is a function $\st_M^N : \mathcal{O}_{X(M)}(N) \to X(M)$
    sending each $b$ to the unique $a \in X(M)$ such that $b$ and $a$
    are infinitesimally close.  This is well-defined because $X$ is
    Hausdorff.
  \end{itemize}
\end{Def}
The map $\st_M^N$ is the ``standard part'' map from
$\mathcal{O}_{X(M)}(N)$ to $X(M)$.
\begin{Def}
  If $p$ is a complete $X$-type over $M$, we say that $p$
  \emph{specializes to} $a \in X(M)$ if $p(x) \vdash x \in U$ for
  every $M$-definable neighborhood $U \ni a$.
\end{Def}
If $b \in X(N)$ is a realization of $p$, then $p$ specializes to $a$
if and only if $\st_M^N(b) = a$.

\begin{Fact}\label{infinitesimal type}
If $a'\in N\backslash M$ is infinitesimally close to $ a\in M$ over $M$, then there is a coset $C\sq N\backslash\{0\}$ of $\bigcap_{n\geq 1}P_n(N)$ such that $\tp(a'/M)$ is determined by the partial type
\[
\{v(x-a)>\gamma \mid \gamma\in \Gamma_M\}\cup \{x-a\in C\},
\]
and $\tp(a'/M)$ is definable over $M$.
\end{Fact}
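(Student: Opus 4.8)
The plan is to combine Macintyre's quantifier elimination with a Hensel's-lemma computation of $\bigcap_{n}P_n$, so first I would set up the algebraic inputs. Write $\mathfrak m=\{x\in\M:v(x)>\Gamma_M\}$ for the ideal of infinitesimals over $M$. (i) Since $M$ is a model, $\acl(M)=M$, and as $\acl$ agrees with field-theoretic algebraic closure, $M$ is relatively algebraically closed in $\M$; hence $a'$ is transcendental over $M$, and so is any $b\in\M$ with $v(b-a)>\Gamma_M$ and $b\ne a$ (such a $b$ cannot lie in $M$). (ii) Applying a strong form of Hensel's lemma to $X^n-(1+\varepsilon)$ at $X=1$, using $v(-\varepsilon)>\Gamma_M\ni 2v(n)$, shows $1+\varepsilon\in P_n(\M)$ for every $\varepsilon\in\mathfrak m$ and every $n$; thus the subgroup $\bigcap_{n\ge1}P_n(\M)\le\M^\times$ contains $1+\mathfrak m$. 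Finally, $P_n(N)=P_n(\M)\cap N$ since $N\prec\M$.

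Then I would take $C=(a'-a)\cdot\bigcap_{n\ge1}P_n(N)$, reading the condition $x-a\in C$ in $\M$ as $(x-a)/(a'-a)\in\bigcap_{n\ge1}P_n(\M)$, and observe that $a'$ realizes the displayed partial type $\Phi$ (indeed $v(a'-a)>\Gamma_M$ and $a'\ne a$). To establish the first assertion it then suffices to show that $\Phi$ has a unique completion over $M$, which is forced to be $\tp(a'/M)$. So let $b\models\Phi$; by Macintyre quantifier elimination $\tp(b/M)=\tp(a'/M)$ once $P_n(f(b))\iff P_n(f(a'))$ holds for all $f\in M[x]$ and $n\ge2$ --- there is nothing to check about polynomial equations, as $b$ and $a'$ are transcendental over $M$. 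Factoring $f(x)=(x-a)^mg(x)$ with $g(a)\ne0$, continuity of $g$ together with infinitesimal closeness of $b$ and $a'$ to $a$ over $M$ puts $g(b)/g(a)$ and $g(a)/g(a')$ in $1+\mathfrak m$, while $b\models\Phi$ puts $(b-a)/(a'-a)$ in $\bigcap_kP_k(\M)$; multiplying these (together with the $m$-th power) gives $f(b)/f(a')\in\bigcap_kP_k(\M)\subseteq P_n(\M)$, so $P_n(f(b))\iff P_n(f(a'))$ because $P_n(\M)$ is a subgroup of $\M^\times$.

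For definability it is enough, again by quantifier elimination, to produce an $\la_M$-definition for each atomic formula $f(x;y)=0$ and $P_n(f(x;y))$ with $f\in\z[x,y]$. An equation $f(a';b)=0$ holds iff the polynomial $f(x;b)\in M[x]$ is identically zero (transcendence of $a'$), which is an $\la$-condition on $b$. For $P_n(f(x;y))$, write $f(x;b)=\sum_i f_i(b)x^i$ and, when this is not identically zero, let $m(b)$ be the least $i$ with $f_i(b)\ne0$; then $f(a';b)=(a')^{m(b)}f_{m(b)}(b)(1+\delta)$ with $\delta\in\mathfrak m$, since every term $f_i(b)f_{m(b)}(b)^{-1}(a')^{i-m(b)}$ with $i>m(b)$ is infinitesimal over $M$. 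As $1+\delta\in P_n(\M)$ and $f_{m(b)}(b)\in M$, we get $P_n(f(a';b))\iff f_{m(b)}(b)\in M\cap(a')^{-m(b)}P_n(\M)$. For each fixed $j$ the set $M\cap(a')^{-j}P_n(\M)$ is either empty or a single coset of $P_n(M)=P_n(\M)\cap M$ in $M^\times$, hence $\la_M$-definable --- if nonempty it equals $\{x:P_n(xc_0^{-1})\}$ for any representative $c_0\in M$. Therefore $\{b\in M:\M\models P_n(f(a';b))\}$ is the finite union over $j\le\deg_x f$ of the $\la_M$-definable sets $\{b:f_0(b)=\cdots=f_{j-1}(b)=0\ne f_j(b)\text{ and }f_j(b)\in M\cap(a')^{-j}P_n(\M)\}$, so it is $\la_M$-definable.

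The step I expect to carry the real content is the inclusion $1+\mathfrak m\subseteq\bigcap_nP_n(\M)$: it is precisely what lets one absorb all infinitesimal fluctuation into $n$-th powers, and it underlies both the uniqueness of the completion of $\Phi$ and the reduction of $P_n(f(a';b))$ to a piecewise-definable coset condition on the single coefficient $f_{m(b)}(b)$. The rest --- checking that $1+\delta$ is uniformly infinitesimal over $M$ after dividing out the lowest monomial, and that moving between $P_n(M)$, $P_n(N)$, and $P_n(\M)$ causes no difficulty --- should be routine.
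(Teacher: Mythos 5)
Your overall strategy is the right one, and it is in the spirit of the argument the paper points to (Lemma 2.1 of GPY): Hensel's lemma gives $1+\mathfrak{m}\subseteq\bigcap_n P_n(\M)$, transcendence of $a'$ over $M$ disposes of polynomial equations, and Macintyre's quantifier elimination reduces everything to the predicates $P_n$ applied to polynomials. The uniqueness half is correct as written, precisely because there you factor $f(x)=(x-a)^m g(x)$ with $g(a)\neq 0$, i.e.\ you work \emph{centered at $a$}. The definability half, however, contains a step that fails whenever $a\neq 0$: you expand $f(x;b)=\sum_i f_i(b)x^i$ around $0$ and claim that each term $f_i(b)f_{m(b)}(b)^{-1}(a')^{i-m(b)}$ with $i>m(b)$ is infinitesimal over $M$. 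That claim uses $v(a')>\Gamma_M$, which holds only when $a=0$; in general $v(a')=v(a)\in\Gamma_M$. Concretely, take $a=1$ and $f(x;b)=x-b$: your formula asserts $a'-b\in -b(1+\mathfrak{m})$ and hence $P_n(a'-b)\iff P_n(-b)$, whereas in fact $a'-b\in (1-b)(1+\mathfrak{m})$ and the correct criterion is $P_n(1-b)$. So the defining formula you produce is wrong for $a\neq 0$, even though the set it is meant to define is still definable.

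The repair is exactly the move you already made in the uniqueness part: expand around $a$, writing $f(x;b)=\sum_i g_i(b)(x-a)^i$ where the Taylor coefficients $g_i(b)$ are polynomials in $b$ over $M$ (legitimate since $a\in M$), let $m(b)$ be least with $g_{m(b)}(b)\neq 0$, and conclude $f(a';b)=g_{m(b)}(b)(a'-a)^{m(b)}(1+\delta)$ with $\delta\in\mathfrak{m}$, because now it is $a'-a$ whose valuation exceeds $\Gamma_M$. Alternatively, translate first: $x\mapsto x+a$ is an $M$-definable bijection, so $\tp(a'/M)$ is definable if and only if $\tp(a'-a/M)$ is, and you may assume $a=0$ from the start. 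With either fix, your observation that $M\cap(a'-a)^{-j}P_n(\M)$ is empty or a single coset of $P_n(M)$, hence $\la_M$-definable via a representative, goes through verbatim, and the proof is complete.
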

This follows by a similar argument to Lemma 2.1 in \cite{GPY}.

\begin{Lemma} \label{chains}
  Let $\mathcal{C}$ be a definable (i.e., interpretable) family of
  balls $B \subseteq M$.  Suppose the following conditions hold:
  \begin{enumerate}
  \item $\mathcal{C}$ is non-empty.
  \item $\mathcal{C}$ is a chain: it is linearly ordered by $\subseteq$.
  \item $\mathcal{C}$ is upwards-closed: if $B \supseteq B' \in
    \mathcal{C}$ for balls $B, B'$, then $B \in \mathcal{C}$.
  \item $\mathcal{C}$ has no minimal element.
  \end{enumerate}
  Then there is $d \in M$ such that $\mathcal{C}$ is the set of balls
  containing $d$.
\end{Lemma}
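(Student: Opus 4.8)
The plan is to extract the point $d$ as a limit of a decreasing sequence of balls in $\mathcal{C}$, using the completeness of $M$ in the valuation topology together with compactness (saturation of $\M$). First I would observe that by condition (4) every ball $B \in \mathcal{C}$ strictly contains a smaller ball $B' \in \mathcal{C}$, so $\mathcal{C}$ is coinitial in itself; combined with condition (3), the set $\mathcal{C}$ is exactly an up-set determined by its "bottom". I would consider the intersection $\bigcap_{B \in \mathcal{C}} B$. If this intersection contains a point $d \in M$, then any ball $B''$ containing $d$ either contains some $B \in \mathcal{C}$ (hence lies in $\mathcal{C}$ by upward closure) or is contained in every sufficiently small $B \in \mathcal{C}$; the second case is impossible because the radii of balls in $\mathcal{C}$ are not eventually constant (no minimal element), so some $B \in \mathcal{C}$ has smaller valuative radius than $B''$ and then $B'' \supseteq B$. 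This shows $\mathcal{C}$ is precisely the set of balls containing $d$, as desired.

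So the crux is to show $\bigcap_{B \in \mathcal{C}} B \cap M \neq \emptyset$. Here I would first pass to the monster model $\M$: since $\mathcal{C}$ is definable (interpretable) over some small set of parameters, and $\M$ is saturated, the filtered intersection $\bigcap_{B \in \mathcal{C}(\M)} B(\M)$ is non-empty in $\M$ — pick $e \models \{x \in B : B \in \mathcal{C}\}$, which is finitely satisfiable since $\mathcal{C}$ is a chain and each $B$ is non-empty. Now I need to come down to $M$. The key point is that for a chain of balls with no minimal element, the "center" is already determined over $M$: writing $\gamma_B \in \Gamma_M \cup \{\infty\}$ for the valuative radius of $B$, the radii $\{\gamma_B : B \in \mathcal{C}\}$ form an upward-closed subset of $\Gamma_M$ with no least element. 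For each $\gamma$ in this set there is a unique $B_\gamma \in \mathcal{C}$ of radius $\gamma$, and these are nested. By definable Skolem functions one can uniformly pick a point $c_\gamma \in B_\gamma \cap M$; the $c_\gamma$ form a Cauchy-like net, and I would use the fact that $M \models p\mathrm{CF}$ makes $M$ "definably complete" along such definable nets — more precisely, the type over $M$ asserting $x \in B_\gamma$ for all $\gamma$ is finitely satisfiable in $M$, hence (being a type over a model) realized in $M$.

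The step I expect to be the main obstacle is exactly this last descent: showing the intersection is inhabited by a point of $M$ itself, not merely of $\M$. The naive worry is that the intersection over $\M$ could be a single point outside $M$, or empty in $M$ even when non-empty in $\M$ — but that cannot happen for a \emph{definable} chain of balls with no minimal element, because the obstruction to $M$-rationality (a ball "shrinking to a point of radius $\infty$ that is transcendental over $M$") would force the chain to have, cofinally, balls that are $\acl(M)$-definable singletons, contradicting no-minimal-element together with the fact that a point of $\M \setminus M$ lying in every $B \in \mathcal{C}$ has $\tp$ over $M$ which, by Fact~\ref{infinitesimal type} (or rather its analogue for the generic type of an intersection of balls), is not isolated and hence is compatible with infinitely many nested $M$-balls — yielding instead that $\mathcal{C}$ is cofinally indexed by $\Gamma_M$ and the center can be chosen in $M$. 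I would package this by: (i) show $\{\gamma_B\}$ is cofinal in $\Gamma_M$ or has a supremum $\gamma^* \in \Gamma_M \cup \{\infty\}$; (ii) in either case use Skolem functions to get a definable $\gamma \mapsto c_\gamma \in M$ with $c_\gamma \in B_\gamma$; (iii) argue $\{v(x - c_\gamma) \geq \gamma\}_\gamma$ is finitely satisfiable in $M$ and realized by some $d \in M$; (iv) verify $d \in \bigcap \mathcal{C}$ and conclude as in the first paragraph.
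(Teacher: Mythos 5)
The crux of the lemma is exactly the step you yourself flag as the main obstacle, and your treatment of it has a genuine gap. Your step (iii) rests on the claim that the partial type $\{x \in B_\gamma\}_\gamma$ over $M$ is finitely satisfiable in $M$ and ``hence (being a type over a model) realized in $M$'' --- but that is not a valid principle: finite satisfiability over a model only yields a realization in an elementary extension, and realizing such a type \emph{inside} $M$ is precisely what saturation (not assumed; $M$ is an arbitrary model of $p$CF) would give, i.e.\ it is precisely the content of the lemma. Similarly, your opening appeal to ``the completeness of $M$ in the valuation topology'' is unavailable: an arbitrary $p$-adically closed field is neither complete nor spherically complete (e.g.\ a countable elementary submodel of $\Q$), and for \emph{non-definable} chains of balls the conclusion genuinely fails in such models, so any correct proof must use the definability of $\mathcal{C}$ in an essential way. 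Your fallback discussion (cofinally many $\acl(M)$-definable singletons, the analogue of Fact~\ref{infinitesimal type}) does not supply such a use: it never rules out the real danger, namely that $\mathcal{C}$ is indexed by all of $\Gamma_M$ yet $\bigcap_{B \in \mathcal{C}} B(M) = \emptyset$, the intersection being inhabited only in $\M$.

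The missing idea is the paper's short proof: for each formula defining a candidate family, hypotheses (1)--(4) and the conclusion are first-order conditions on the parameters, so the lemma is a scheme of sentences and may be checked in $M = \Q$, where spherical completeness (compactness of closed balls) immediately gives a point $d$ in the intersection; your first paragraph then finishes the verification. (Alternatively you could quote a definable spherical completeness statement for $p$CF, but that is itself proved by the same transfer.) Two smaller repairs in that verification: to exclude your ``second case'' you need a ball of $\mathcal{C}$ of valuative radius at least that of $B''$ (i.e.\ contained in $B''$), not of smaller radius; and the fact that such radii occur is not just ``radii not eventually constant'' --- in a nonstandard value group you should argue that the set of radii occurring in $\mathcal{C}$ is a definable, downward-closed subset of $\Gamma_M$ with no maximum (a maximum would give a minimal ball), hence equals $\Gamma_M$ by Presburger cell structure, or simply note that after transferring to $\Q$ the value group is $\mathbb{Z}$ and this is automatic.
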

\begin{proof}
  We may assume $M = \Q$, in which case the lemma is an easy exercise 
  using spherical completeness of $\Q$.
\end{proof}

\begin{Lemma} \label{1-d-types}
  Let $X$ be an $M$-definable set, and $p$ be a 1-dimensional
  definable type over $M$ in $X$.  Then there is an elementary
  extension $N \succ M$ and elements $a \in M$, $b \in X(M)$, such
  that $a$ is infinitesimally close to 0, $b \in \dcl(Ma)$, and $p =
  \tp(b/M)$.
\end{Lemma}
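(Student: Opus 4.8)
The plan is to unwind what it means for a $1$-dimensional definable type $p \in S_X(M)$ to have dimension one, and to realize $p$ by a point defined over $M$ together with a single generic infinitesimal element $a$. Since $X$ is an $M$-definable manifold, I may as well work in one of the coordinate charts: the type $p$ concentrates on some $M$-definable open $U_i \subseteq X$ homeomorphic (over $M$) to an open subset $V_i \subseteq M^n$, so after transporting along this homeomorphism I reduce to the case $X \subseteq M^n$. Now $\dim(p) = 1$ means the following: there is a coordinate projection $\pi : M^n \to M$, say $\pi(x_1,\dots,x_n) = x_j$, such that $\pi_*p$ is a $1$-dimensional (hence non-algebraic) type on $M$, while every other coordinate of a realization lies in $\acl$ of that one coordinate together with $M$. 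Concretely, take $b = (b_1,\dots,b_n) \models p$ in a monster model; then $\dim(b/M) = 1$, so some $b_j$ has $\dim(b_j/M) = 1$ and $b_i \in \acl(M b_j)$ for all $i$.

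The next step is to locate $b_j$ inside a ball tending to $0$. Because $p$ is definable, the pushforward $q := \tp(b_j/M)$ is a definable $1$-dimensional type over $M$ on the affine line. Consider the definable family $\mathcal{C}$ of all balls $B \subseteq M$ with $q \vdash x \in B$; this is a chain (any two balls in a $1$-type are nested), upward-closed, non-empty (e.g.\ $M$ itself, or a bounded ball if $q$ is bounded), and it has \emph{no minimal element}: if it had a minimal ball $B_0$ of radius $\gamma$, then since $q$ is $1$-dimensional it cannot isolate a point, so $q$ would have to meet at least two of the finitely many sub-balls of $B_0$ of radius $\gamma+1$, but then the union of two such sub-balls — which is not a ball — would contradict $q$ being a complete $1$-type concentrating on a ball of radius $\gamma+1$; more carefully, a definable $1$-type over a model of $p$CF, by the analysis underlying Fact~\ref{infinitesimal type} and the $\acl$-pregeometry, either is the generic type of a ball with no minimal radius or is algebraic. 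By Lemma~\ref{chains} applied to a translate, there is $d \in M$ such that $\mathcal{C}$ is exactly the set of balls containing $d$; equivalently, $v(b_j - d) > \Gamma_M$, so $b_j - d$ is infinitesimally close to $0$ over $M$. Set $a := b_j - d \in \dcl(M b_j)$; then $a$ is infinitesimal, $b_j = d + a \in \dcl(Ma)$, and since each $b_i \in \acl(M b_j) = \acl(Ma)$, by definable Skolem functions (or by choosing the realization appropriately) we get $b \in \dcl(Ma)$.

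Finally I take $N \succ M$ to be any elementary submodel of $\M$ containing $a$ (hence $b$); then $a \in N$ is infinitesimally close to $0$, $b \in X(M)$? — here I must be careful, since $b$ need not lie in $X(M)$ but only in $X(N)$; re-reading the statement, ``$b \in X(M)$'' together with ``$p = \tp(b/M)$'' and ``$b \in \dcl(Ma)$'' forces $b$ to be in the dcl of $M$ and one infinitesimal, so indeed $b \in X(N)$ and $b \in X(\M)$, and $p = \tp(b/M)$ by construction. The main obstacle is the middle step: pinning down the precise structure of a definable $1$-dimensional type on the $p$-adic line well enough to produce the infinitesimal $a$ — i.e.\ verifying that the chain $\mathcal{C}$ has no minimal element, which is where one-dimensionality (non-algebraicity) is genuinely used and where one invokes the classification of definable types on $M$ via the $\acl$-pregeometry together with Fact~\ref{infinitesimal type}. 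Everything else is bookkeeping with charts, projections, and definable Skolem functions.
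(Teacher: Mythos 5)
Your overall route is the paper's: use one-dimensionality to replace the realization $b$ by a single coordinate $c = b_j$ with $\dcl(Mb) = \dcl(Mc)$, look at the chain $\mathcal{C}$ of $M$-definable balls containing $c$, apply Lemma~\ref{chains} to get a center $d \in M$, and set $a = c - d$. But two steps have genuine gaps. First, non-emptiness of $\mathcal{C}$: ``$M$ itself'' is not a ball, and if $\tp(c/M)$ is unbounded (i.e.\ $v(c)$ lies below $\Gamma_M$) then \emph{no} $M$-definable ball contains $c$, so your family is empty and Lemma~\ref{chains} gives nothing. This is not a corner case: the lemma gets applied to unbounded types (e.g.\ in Lemma~\ref{crux}), and it is exactly why the paper first replaces $c$ by $1/c$ to arrange $v(c) \ge 0$, so that the ball $R(M)$ of radius $0$ witnesses non-emptiness. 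Your parenthetical ``or a bounded ball if $q$ is bounded'' leaves the unbounded case --- the one that matters --- unhandled.

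Second, the no-minimal-element step, which you flag as the main obstacle and defer to an unproved ``classification of definable $1$-types,'' is in fact both easy and independent of one-dimensionality, and your sketch of it is backwards: a complete type cannot ``meet at least two'' of the sub-balls --- it implies membership in exactly one, and that is precisely the point. If $B$ were a minimal $M$-definable ball containing $c$, write $B$ as the disjoint union of its $p$ sub-balls of the next radius (all $M$-definable, since a center and the radius lie in $M$); $c$ lies in exactly one of them, which therefore belongs to $\mathcal{C}$, contradicting minimality. One-dimensionality is used only to produce the single coordinate $c$, not here. A smaller point: to pass from $b_i \in \acl(Mb_j)$ to $b \in \dcl(Ma)$ you need $\acl = \dcl$ in $p$CF (a consequence of definable Skolem functions, since $\dcl(Ma)$ is then an elementary submodel and an irreducible polynomial over it acquires no roots in an elementary extension); ``choosing the realization appropriately'' cannot repair this, since membership of $b_i$ in $\dcl(Mb_j)$ is a property of the type. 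Finally, your reading of the statement is right: ``$a \in M$, $b \in X(M)$'' should be $a \in N$, $b \in X(N)$.
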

\begin{proof}
  Take $N \succ M$ containing a realization $b$ of $p$.  Because $p$
  is 1-dimensional, there is some singleton $c \in N$ such that
  $\dcl(Mb) = \dcl(Mc)$.  (In fact, we can take $c$ to be a coordinate
  of the tuple $b$.)  Replacing $c$ with $1/c$ if necessary, we may assume that 
  $v(c) \ge 0$.  Then $\tp(c/M)$ is definable and one-dimensional.
  Let $\mathcal{C}$ be the family of $M$-definable balls which contain
  $c$.  Then $\mathcal{C}$ is definable,
  because $\tp(c/M)$ is definable.  Moreover, $\mathcal{C}$ satisfies
  the four conditions of Lemma~\ref{chains}:
  \begin{enumerate}
  \item $\mathcal{C}$ is non-empty, because it contains the ball $R(M)$ of
    radius 0.
  \item $\mathcal{C}$ is a chain, because any two balls which
    intersect are comparable, and $\mathcal{C}$ cannot contain two
    disjoint balls.
  \item $\mathcal{C}$ is upwards-closed, trivially.
  \item $\mathcal{C}$ has no least element.  Otherwise, if $B$ were
    the smallest $M$-definable ball containing $c$, then we could
    write $B$ as a disjoint union of smaller balls $B = B_1 \cup
    \cdots \cup B_p$, and one of the $B_i$ would belong to
    $\mathcal{C}$.
  \end{enumerate}
  By Lemma~\ref{chains}, $\mathcal{C}$ is the class of balls
  around some point $d$.  So there is some $d \in M$ such
  that $c$ is contained in every $M$-definable ball around $d$.
  Therefore, $c$ is infinitesimally close to $d$ over $M$.  Take $a =
  c - d$.
\end{proof}

\begin{Lemma}\label{specializing-types}
  Let $X$ be a definable manifold over $M$.  Let $Y$ be a definably
  compact definable subset of $X$.  Let $p$ be a definable
  1-dimensional complete $Y$-type over $M$.  Then $p$ specializes to a
  point in $Y$.
\end{Lemma}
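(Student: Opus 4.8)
The plan is to combine Lemma~\ref{1-d-types} with Proposition~\ref{defs-agree} and the fact that definable compactness is preserved in elementary extensions (Remark~\ref{defc-ee}). First I would apply Lemma~\ref{1-d-types} to the $X$-type $p$ (which is in fact a $Y$-type): this yields an elementary extension $N \succ M$, an element $a \in N$ infinitesimally close to $0$, and an element $b \in Y(N)$ with $b \in \dcl(Ma)$ and $p = \tp(b/M)$. Since $b \in \dcl(Ma)$, there is an $M$-definable partial function $g$ with $b = g(a)$; after shrinking the domain we may take $g$ to be defined on a punctured ball $\B(0,\gamma) \setminus \{0\}$ around $0$ (for some $\gamma \in \Gamma_M$, using that $a$ is infinitesimally close to $0$), and by Lemma~\ref{stupid-lemma} (applied after rescaling into $R(M)$, or directly after shrinking $\gamma$) we may assume $g$ is continuous on its domain.

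Now $Y(N)$ is definably compact in $N$ by Remark~\ref{defc-ee}, so by the equivalence (1)$\Leftrightarrow$(4) of Proposition~\ref{defs-agree} (applied over $N$) the continuous definable function $g : \B(0,\gamma) \setminus \{0\} \to Y(N)$ has a cluster point $e \in Y(N)$. But in fact I want a cluster point \emph{over $M$}, lying in $Y(M)$: since $a$ is infinitesimally close to $0$, for every $M$-definable neighborhood $U$ of $0$ we have $a \in U$, and I claim the standard part $\st_M^N(b)$ is exactly such a cluster point. Concretely, let $C$ be the set of cluster points of $g$ computed over $M$, i.e. $C = \bigcap_{\gamma' \in \Gamma_M} \overline{g(\B(0,\gamma') \setminus \{0\})}(M)$; this is a filtered intersection of non-empty closed definable subsets of the definably compact set $Y$, hence non-empty, and it is contained in $Y(M)$. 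The key point is that any $d \in C$ works as the specialization point: for any $M$-definable neighborhood $W$ of $d$, the definable set $\{x \in \B(0,\gamma) \setminus \{0\} : g(x) \in W\}$ has $0$ in its closure (this is what "$d$ is a cluster point over $M$" says), so by Fact~\ref{infinitesimal type} describing $\tp(a/M)$, this set belongs to $\tp(a/M)$, whence $b = g(a) \in W$. Thus $p = \tp(b/M) \vdash x \in W$ for every such $W$, i.e. $p$ specializes to $d \in Y(M)$.

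The main obstacle I anticipate is the bookkeeping around "cluster point over $M$" versus the ambient cluster points of Proposition~\ref{defs-agree}: the Proposition is stated for functions into $Y$ over the base field, but here the natural function $g$ lives over $N$ and I care about its behavior relative to $M$-definable data. I expect the cleanest route is to work entirely inside $M$: the set $C$ above is $M$-definable (a filtered intersection indexed by $\Gamma_M$ of $\overline{g(\B(0,\gamma'))}$, all over $M$), so I should verify $C \neq \emptyset$ directly by definable compactness of $Y$ over $M$, rather than passing through $N$ at all — the only role of $N$ and $b$ is to witness, via Fact~\ref{infinitesimal type}, that membership in any $W$ with $d \in C$ is forced by $p$. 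A secondary technical point is ensuring $g$ can be taken continuous on a punctured ball around $0$; this is handled by Lemma~\ref{stupid-lemma} together with the observation that $\tp(a/M)$ concentrates on every punctured $M$-definable ball around $0$, so discarding a finite (hence $M$-definable, of smaller dimension) set of bad points does not disturb $a$.
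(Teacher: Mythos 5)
Your reduction to an $M$-definable continuous $g$ on a punctured ball, and the non-emptiness of the set $C$ of cluster points over $M$, are fine; the gap is in the key step. From ``$\{x \in \B(0,\gamma)\setminus\{0\} : g(x)\in W\}$ has $0$ in its closure'' you cannot conclude that this set belongs to $\tp(a/M)$. Fact~\ref{infinitesimal type} says $\tp(a/M)$ is generated by the conditions $v(x)>\gamma$ ($\gamma\in\Gamma_M$) \emph{together with} membership in a fixed coset of $\bigcap_{n\geq 1}P_n$, and an $M$-definable set can accumulate at $0$ while only meeting cosets other than the one containing $a$. Concretely, take $X=M$, $Y=R(M)$, and let $g$ be the continuous definable function with $g(x)=1$ for $x\in P_2$ and $g(x)=0$ otherwise: both $0$ and $1$ lie in your set $C$, but $\tp(g(a)/M)$ specializes to exactly one of them, according to whether $a$ is a square. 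So the claim ``any $d\in C$ works'' is false --- as it must be, since the specialization point is unique by Hausdorffness while $C$ need not be a singleton.

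Repairing this forces you to track the coset data, and that is exactly where the paper's argument diverges from yours. The paper first passes to an $\aleph_1$-saturated $N\succ M$ and works with the heir $p^N$: saturation provides $u$ with $a/u\in\bigcap_n P_n$, and then a common cluster point $b$ of the restrictions of $g$ to the sets $P_n$ for all $n$ (the sets $S_n$ of such cluster points form a countable filtered family that is \emph{not} uniformly definable, so definable compactness of $Y$ alone does not give a point in $\bigcap_n S_n$; one really needs $\aleph_1$-saturation). Fact~\ref{infinitesimal type} is then applied correctly, to conclude that $g(a)$ is infinitesimally close to $b$, i.e.\ $p^N$ specializes to $b\in Y(N)$. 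Finally there is a transfer step your outline has no analogue of: the set of points to which $p^N$ specializes is $M$-definable (since $p^N$ is $M$-definable) and is a singleton by Hausdorffness, so its unique element lies in $Y(M)$ and $p$ specializes to it. Without the coset bookkeeping and this saturation-plus-transfer mechanism, your argument does not go through.
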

\begin{proof}
  Let $N$ be an $\aleph_1$-saturated elementary extension of $M$, and
  let $\M$ be a monster model extending $N$.  Let $p^N$ be the
  heir of $p$ over $N$.  We first show that 
  $p^N$ specializes to a point in $Y(N)$.  
  Take $c \in Y(\M)$
  realizing $p^N$.  By Lemma~\ref{1-d-types}, we can write $c$ as
  $g(a)$ for some $N$-definable function $g : \M \to Y(\M)$ and some $a \in
  \M$ infinitesimally close to 0 over $N$.  Because $N$ is
  $\aleph_1$-saturated, there is some $u \in N$ such that $a/u \in
  P_n(\M)$ for all $n$.  Replacing $a$ with $a/u$, we may assume that $a
  \in P_n(\M)$ for all $n$.  For each $n$, let $S_n \subseteq Y(N)$ be the
  definable set of cluster points of $g \restriction P_{n}(N)$.  Each
  $S_n$ is closed, and non-empty by Lemma~\ref{defs-agree}(\ref{defs-agree-strange}).  The
  intersection $\bigcap_n S_n$ is filtered, and therefore non-empty by
  $\aleph_1$-saturation.  Take $b \in \bigcap_n S_n$.  Let $\Sigma(x)$ be
  the partial type saying that $x$ is infinitesimally close to 0, $g(x)$ is
  infinitesimally close to $b$, and $x \in P_n$ for all $n$.  Then
  $\Sigma(x)$ is finitely satisfiable, by choice of $b$.  Take $a' \in \M$
  realizing $\Sigma(x)$.  By Fact~\ref{infinitesimal type}, $\tp(a'/N) =
  \tp(a/N)$.  Therefore $a$ satisfies $\Sigma(x)$, and so $g(a)$ is
  infinitesimally close to $b$.  It follows that $p^N(x)$ specializes to
  $b$.

  Let $Z$ be the set of $b \in Y(N)$ such that $p^N$ specializes to
  $b$.  The set $Z$ is $M$-definable, because $p^N$ is definable over
  $M$.  The above argument shows $|Z| > 0$.  On the other hand, $|Z|
  \le 1$ because $Y(N)$ is Hausdorff.  Therefore $Z$ is a singleton
  $\{b\}$, and the element $b$ lies in $Y(M)$.  Then $p$ specializes
  to $b$.
\end{proof}

\begin{Prop}\label{sp-char}
  Work in a model $M$.  Let $X$ be a definable manifold and $Y$ be a
  definable subset.  Then $Y$ is definably compact if and only if
  every 1-dimensional definable $Y$-type specializes to a point of
  $Y$.
\end{Prop}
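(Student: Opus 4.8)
The plan is to prove both directions, leveraging the machinery already built up. The forward direction is essentially Lemma~\ref{specializing-types}: if $Y$ is definably compact, then any 1-dimensional definable $Y$-type $p$ over $M$ specializes to a point of $Y$, since $Y$ is a definably compact definable subset of the definable manifold $X$. So the content of the proposition is the converse. I would argue the contrapositive: assuming $Y$ is not definably compact, I will produce a 1-dimensional definable $Y$-type that does not specialize to any point of $Y$.

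First I would reduce to the setting of a definable continuous function exhibiting non-compactness. By Proposition~\ref{defs-agree}, since $Y$ is not definably compact, there is a definable continuous function $f : \B(0,\delta) \setminus \{0\} \to Y$ with no cluster point in $Y$ (using equivalence (4); one may take $\delta$ with $f$ continuous on a punctured ball via Lemma~\ref{stupid-lemma}). The idea is then to take the ``type at $0$'' along $f$: let $q$ be a 1-dimensional definable type over $M$ in $\B(0,\delta) \setminus \{0\}$ concentrating on the complement of every ball $\B(0,\gamma)$ — for instance, pick $q$ to be (a suitable definable type extending) the partial type $\{v(x) > \gamma \mid \gamma \in \Gamma_M\} \cup \{x \neq 0\}$; such a definable type exists and is 1-dimensional. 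Then push forward: let $p = f_*(q)$, i.e., $p = \tp(f(c)/M)$ for $c \models q$ in a monster model. Since $f$ is $M$-definable, $p$ is a definable type, and it is 1-dimensional because $\dim$ cannot increase under a definable function and $f$ restricted to a cofinal punctured neighborhood is injective enough — actually I only need $\dim(p) \le 1$, and $\dim(p) \ge 1$ will follow since $p$ is unbounded, hence nonalgebraic. So $p$ is a 1-dimensional definable $Y$-type.

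It remains to check that $p$ does not specialize to any point of $Y$. Suppose for contradiction $p$ specializes to $a \in Y(M)$. Unwinding the definition via the standard part map: taking $b = f(c) \models p$ in an $\aleph_1$-saturated $N \succ M$ (working with the heir $p^N$ and $q^N$ as in Lemma~\ref{specializing-types}), specialization to $a$ means $\st_M^N(b) = a$, i.e., $f(c)$ lies in every $M$-definable neighborhood of $a$. But $c$ realizes $q^N$, so $c$ is an element of $\B(0,\delta) \setminus \{0\}$ with $v(c) > \Gamma_M$; in particular $c$ is infinitesimally close to $0$ over $M$. Then the pair $(c, f(c))$ witnesses that $(0,a)$ is in the closure of the graph of $f$: for any $M$-definable neighborhoods $U_1 \ni 0$ and $U_2 \ni a$, the formula ``$\exists x \, (x \in U_1 \cap \B(0,\delta) \setminus\{0\} \wedge f(x) \in U_2)$'' holds in $N$ (witnessed by $c$) hence in $M$. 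Thus $a$ is a cluster point of $f$ in $Y$, contradicting the choice of $f$. Therefore $p$ specializes to no point of $Y$, completing the contrapositive.

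The main obstacle I anticipate is the bookkeeping around definability and dimension of the pushforward type $p = f_*(q)$ — specifically, confirming that $p$ is genuinely 1-dimensional (not $0$-dimensional, i.e. algebraic) and definable over $M$. Definability of $p$ is a standard consequence of $q$ being definable and $f$ being $M$-definable (the pushforward of a definable type under a definable function is definable). For the dimension, the key point is that $p$ is unbounded: the formula $x \in Y \setminus W_\gamma$ lies in $p$ for every $\gamma$ in a $\Gamma$-exhaustion $\{W_\gamma\}$ of $X$ (since $f(c) \notin W_\gamma$ because $f$ has no cluster point, so $f$ eventually escapes every definably compact $W_\gamma$ — this needs the same escaping argument as in the proof of (4)$\Rightarrow$(1) in Proposition~\ref{defs-agree}, perhaps choosing $f$ via Skolem functions so that $f(x) \notin W_{v(x)}$). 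An unbounded type cannot be algebraic over $M$, so $\dim(p) \ge 1$, and combined with $\dim(p) \le \dim(q) = 1$ we get equality. Everything else is a routine assembly of earlier results.
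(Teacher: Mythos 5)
Your proposal is, in substance, the paper's own argument run in contrapositive form: the paper assumes every $1$-dimensional definable $Y$-type specializes and verifies criterion (3) of Proposition~\ref{defs-agree} directly --- given a definable continuous $f \colon R(M)\setminus\{0\}\to Y$ it takes $a$ infinitesimally close to $0$, uses Fact~\ref{infinitesimal type} to see that $\tp(f(a)/M)$ is definable of dimension at most $1$, and turns the assumed specialization into a cluster point by exactly your elementarity step. So the ingredients are identical; the only extra burden your contrapositive formulation carries is that you must certify that the pushforward type is genuinely $1$-dimensional, and that is where the one faulty step sits.

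The claim that ``$f$ has no cluster point in $Y$, so $f$ eventually escapes every definably compact $W_\gamma$'' is false for an arbitrary such $f$: take $Y = R(M)\setminus\{0\}$ (bounded, not closed, hence not definably compact) and $f$ the identity map $R(M)\setminus\{0\} \to Y$; its only cluster point is $0 \notin Y$, yet $f(c)=c$ is bounded, so the formulas $x \in Y \setminus W_\gamma$ do not all lie in the pushforward type. (The conclusion of the proposition is still witnessed in this example, since $\tp(c/M)$ specializes only to $0 \notin Y$, but not via your unboundedness argument.) Your parenthetical remedy is therefore needed, not optional: construct $f$ as in the proof of (4)$\Rightarrow$(1) of Proposition~\ref{defs-agree}, using definable Skolem functions and a $\Gamma$-exhaustion so that $f(x) \in Y \setminus W_{v(x)}$, continuous on a punctured ball by Lemma~\ref{stupid-lemma}; then $f(c) \notin W_\gamma(\M)$ for every $\gamma \in \Gamma_M$, which gives $\dim \ge 1$ and also rules out specialization to any $a \in Y(M)$ (such an $a$ lies in some definably compact $M$-definable $W_\gamma$, which would then belong to the type). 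Alternatively you can avoid unboundedness altogether: if $\tp(f(c)/M)$ were $0$-dimensional it would be algebraic, hence realized by some $b \in Y(M)$ since $M$ is a model, and your own elementarity argument shows such a $b$ is a cluster point of $f$ in $Y$, a contradiction. With either repair the proof is correct and matches the paper's.
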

\begin{proof}
  One direction is Lemma~\ref{specializing-types}.  Conversely,
  suppose every 1-dimensional definable type in $Y$ specializes to a
  point.  We claim that $Y$ is definably compact.  We use criterion
  (\ref{da-3}) of Lemma~\ref{defs-agree}.  Let $f : R(M)
  \setminus \{0\} \to Y$ be a definable continuous function.  Take a
  monster model $\M \succ M$ and a non-zero $a \in \M$ infinitesimally
  close to 0 over $M$.  Let $b = f(a)$.  By Fact~\ref{infinitesimal
    type}, $\tp(a/M)$ is definable.  Therefore $\tp(b/M)$ is
  1-dimensional and definable.  Then $\tp(b/M)$ specializes to a point
  $c \in Y(M)$.  We claim that $c$ is a cluster point of $f$.  For any
  $M$-definable neighborhoods $U_1 \ni 0$ and $U_2 \ni c$, we have
  $(a,f(a)) = (a,b) \in U_1 \times U_2$.  As $M \prec \M$, there must
  be some $(a',f(a')) \in U_1(M) \times U_2(M)$.  This shows that $c$ is a
  cluster point of $f$.
\end{proof}

\begin{Lemma}\label{crux}
  Let $X$ be an $M$-definable manifold and $\{O_t\}_{t \in \Gamma_M}$
  be a $\Gamma$-exhaustion.  Let $p$ be a definable 1-dimensional type
  in $X$ over $M$, such that $p$ does not concentrate on $O_t$ for any
  $t \in \Gamma_M$.  Suppose $\M \succ N \succ M$.  Suppose that $b
  \in X(\M)$ realizes $p$, and $b \notin O_t(\M)$ for any $t \in
  \Gamma_N$.  Then $b$ realizes $p^N$, the heir of $p$
  over $N$.
\end{Lemma}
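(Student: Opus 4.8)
The plan is to reduce the statement, via Lemma~\ref{1-d-types}, to a question about a single valued‑field variable, and then to win with a definable‑compactness argument on annuli.

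First I would apply Lemma~\ref{1-d-types} to $p$ to produce an $M$‑definable partial function $g$ with $p \vdash \exists x\,(y = g(x) \wedge v(x) > \gamma)$ for every $\gamma \in \Gamma_M$; since $\M$ is saturated and $b \models p$, there is $a \in \M$ with $b = g(a)$ and $v(a) > \Gamma_M$, so $a$ is infinitesimally close to $0$ over $M$. By Fact~\ref{infinitesimal type}, $q_0 := \tp(a/M)$ is definable over $M$ and is axiomatized by $\{v(x) > \gamma : \gamma \in \Gamma_M\}$ together with a conjunction of conditions placing $x$ in a fixed coset of $P_n$ for each $n$ (these cosets have representatives in $M^{\times}$). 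Computing definition schemes, the heir $q_0^N$ is then axiomatized over $N$ by $\{v(x) > \gamma : \gamma \in \Gamma_N\}$ together with the \emph{same} coset conditions, and moreover $g_{\ast}(q_0^N) = (g_{\ast}q_0)^N = p^N$, because pushing a definable type forward along an $M$‑definable function commutes with passing to heirs. Since $a$ already satisfies all the coset conditions and $b \in \dcl(Na)$, it now suffices to prove that $v(a) > \gamma$ for every $\gamma \in \Gamma_N$: this forces $\tp(a/N) = q_0^N$, hence $\tp(b/N) = p^N$.

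So I would suppose for contradiction that $v(a) \le \gamma_0$ for some $\gamma_0 \in \Gamma_N$ (necessarily $\gamma_0 > \Gamma_M$). From $q_0 \vdash x \in \operatorname{dom}(g)$ and compactness, a finite part of $q_0$ implies $x \in \operatorname{dom}(g)$; this gives $\delta \in \Gamma_M$, $L \in \N$ and $m \in M^{\times}$ with $\{v(x) \ge \delta+1\} \cap mP_L \subseteq \operatorname{dom}(g)$. Enlarging $\delta$ if necessary and using Lemma~\ref{stupid-lemma} (definable functions are continuous off a finite subset of $M$), I may also assume $g$ is defined and continuous on the ``tube'' $W := \{v(x) \ge \delta+1\} \cap mP_L$; note $a \in W$, since $v(a) > \Gamma_M$ and $a$ lies in the relevant coset. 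For any value‑group element $n \ge \delta+1$ the annulus slice $W_n := \{\,\delta+1 \le v(x) \le n\,\} \cap mP_L$ is clopen (as $P_L$ is a clopen finite‑index subgroup) and bounded, hence definably compact by Lemma~\ref{cb-k}; as $g$ is defined and continuous on $W \supseteq W_n$, the image $g(W_n)$ is definably compact by Fact~\ref{defc}, and therefore contained in $O_t$ for some $t$. Letting $T(n)$ be the least such $t$ defines an $M$‑definable function with values in $\Gamma$; ``$T(n)$ is defined for all $n \ge \delta+1$'' holds in $M$ by Lemma~\ref{cb-k} and Fact~\ref{defc}, hence in $\M$. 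Evaluating at $n = \gamma_0 \in \Gamma_N$: since $\Gamma_N$ is definably closed, $T(\gamma_0) \in \Gamma_N$; and $a \in W_{\gamma_0}$ because $\delta+1 \le v(a) \le \gamma_0$, so $b = g(a) \in O_{T(\gamma_0)}(\M)$ with $T(\gamma_0) \in \Gamma_N$ — contradicting the hypothesis that $b \notin O_t(\M)$ for all $t \in \Gamma_N$. This finishes the proof.

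The step I expect to be the main obstacle is the last one, and the reason it is not routine is exactly that $v(a)$ may itself lie outside $\Gamma_N$: a naive argument using the single sphere $\{v(x) = v(a)\}$ fails, since the bound it produces on $g$ need not be a $\Gamma_N$‑element. Reaching out to a genuine $\Gamma_N$‑value $\gamma_0 \ge v(a)$ through the annulus $W_{\gamma_0}$ is precisely what forces the bound into $\Gamma_N$. A secondary bookkeeping point is that $g$ is only guaranteed total and continuous on a coset‑tube $mP_L$, not on a full punctured ball, so the coset data coming from Fact~\ref{infinitesimal type} must be carried along throughout — in particular when checking $a \in W_{\gamma_0}$ and when verifying that $q_0^N$ and $\tp(a/N)$ agree on the coset conditions.
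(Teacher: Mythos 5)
Your proposal is correct and takes essentially the same route as the paper: both reduce via Lemma~\ref{1-d-types} to $b=f(a)$ with $a$ infinitesimal over $M$, then show $a$ must be infinitesimal over $N$ by observing that the annulus between a radius in $\Gamma_M$ (past the finitely many discontinuities, Lemma~\ref{stupid-lemma}) and a hypothetical radius in $\Gamma_N$ is closed and bounded, hence definably compact, so its image under the continuous definable map lies in some $O_t$ with $t\in\Gamma_N$, contradicting the hypothesis on $b$; finally both conclude with Fact~\ref{infinitesimal type} and the fact that heirs push forward along $M$-definable functions. The only cosmetic difference is that the paper obtains $t\in\Gamma_N$ directly from $N$-definability of the image of the annulus (via Proposition~\ref{xh-3}), whereas you route it through the auxiliary $M$-definable function $T$ and the coset-tube bookkeeping, which amounts to the same thing.
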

\begin{proof}
  By Lemma~\ref{1-d-types}, we have $b = f(a)$ for some $M$-definable
  function $f : \M \to X$ and some $a \in \M$ infinitesimally close to
  0 over $M$.  By Lemma~\ref{stupid-lemma}, $f$ is continuous on
  $\B(0,\gamma_0)$ for some sufficiently large $\gamma_0 \in
  \Gamma_M$; note that $v(a) > \gamma_0$.  We claim that $a$ is
  infinitesimally close to 0 over $N$.  Otherwise, there is some
  $\gamma \in \Gamma_N$ such that $v(a) < \gamma$.  Let $A$ be the
  definable set of $x \in \M$ such that $\gamma_0 < v(x) < \gamma$;
  note that $a \in A$.  The set $A$ is definably compact and
  $N$-definable.  Also, $f$ is $N$-definable and continuous on $A$.
  Therefore, the image $f(A)$ is $N$-definable, and definably compact.
  By Proposition~\ref{prop-bounded}, 
  there is some $t \in \Gamma_N$ such that
  $f(A) \subseteq O_t$.  Then $b = f(a) \in f(A) \subseteq O_t(\M)$,
  contradicting the assumptions.

  This shows that $a$ is infinitesimally close to 0 over $N$.  By
  Fact~\ref{infinitesimal type}, $\tp(a/N)$ is the heir of $\tp(a/M)$,
  implying that $\tp(b/N) = \tp(f(a)/N)$ is the heir of $\tp(f(a)/M) =
  p$.
\end{proof}

\subsection{Definable groups in $p$CF} \label{dc-gr}

By a \emph{definable group} over $M$, we mean a definable set with a
definable group operation. By \cite{Pillay-G-in-p}, any group $G$
definable in $M$ admits a unique definable manifold structure making
the group operations be continuous.

\begin{Rmk}
  \label{sketchy2}
  In particular, there is a canonical notion of ``definable
  compactness'' for abstract definable groups and their definable
  subsets.  As in Remarks~\ref{defc-ee} and \ref{sketchy1}, one can
  show that these notions are definable in families and invariant in
  elementary extensions.
\end{Rmk}

\begin{Def}\label{def-gnb}
  A \emph{good neighborhood basis} is a definable neighborhood basis
  of the form $\{O_t : t \in \Gamma_M\}$ which is also a
  $\Gamma$-exhaustion, and such that $O_t = O_t^{-1}$ for each $t \in
  \Gamma_M$.
\end{Def}

\begin{Prop}
  Every definable group has a good neighborhood basis.
\end{Prop}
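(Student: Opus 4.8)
The plan is to build a good neighborhood basis from an arbitrary $\Gamma$-exhaustion of $G$, using the continuity of the group operations and two symmetrization steps. By Remark~\ref{xh-2}, fix some $\Gamma$-exhaustion $\{W_\gamma\}_{\gamma \in \Gamma_M}$ of $G$; each $W_\gamma$ is open, clopen, and definably compact. These need not be neighborhoods of $\id_G$, need not be symmetric, and need not be subgroups-in-the-small, so all three defects must be repaired.

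First I would arrange that the sets contain $\id_G$ and shrink to it. Since $G = \bigcup_\gamma W_\gamma$, there is some $\gamma_0$ with $\id_G \in W_{\gamma_0}$; discard all $W_\gamma$ with $\gamma < \gamma_0$ and reindex, so we may assume $\id_G \in W_\gamma$ for all $\gamma$. To get a genuine \emph{neighborhood} basis we pass to interiors of translates: the idea is to define, for each $t \in \Gamma_M$, a clopen definably compact neighborhood $U_t$ of $\id_G$ that is ``as large as possible subject to $v$-radius $t$'' inside the manifold chart around $\id_G$. Concretely, pick a chart $\varphi$ identifying an open neighborhood of $\id_G$ with an open $V \subseteq M^n$, $\varphi(\id_G) = 0$, and let $U_t := \varphi^{-1}(\B(0,t) \cap V)$ for $t$ large enough that $\B(0,t) \subseteq V$; these are clopen, definably compact (closed and bounded subsets of $M^n$, via Lemma~\ref{cb-k}, pulled back through a homeomorphism), form a neighborhood basis of $\id_G$, and are nested. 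Intersecting with $W_t$ if desired keeps definable compactness (Fact~\ref{defc}(\ref{closed-compact})). So now we have a definable \emph{neighborhood basis} $\{U_t\}$ of $\id_G$ consisting of clopen definably compact sets, nested, with $\bigcup_t U_t$ containing a neighborhood of $\id_G$ — but we still need the union to be all of $G$ and we need symmetry.

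For the symmetry, replace $U_t$ by $U_t \cap U_t^{-1}$: inversion is a definable homeomorphism fixing $\id_G$, so $U_t^{-1}$ is again a clopen definably compact neighborhood of $\id_G$, hence so is the intersection, and it is now $=$ its own inverse; nestedness is preserved. For the exhaustion condition $G = \bigcup_t (U_t \cap U_t^{-1})$, I would use the group structure: any $g \in G$ lies in $W_\gamma$ for some $\gamma$, and since $W_\gamma$ is definably compact, $g^{-1} W_\gamma$ is a definably compact neighborhood of $\id_G$ (translation is a homeomorphism), hence $g^{-1} W_\gamma \subseteq U_t$ for $t$ large by Proposition~\ref{xh-3}, i.e.\ $g \in W_\gamma \subseteq g U_t$; applying this to $g^{-1}$ too and intersecting shows $g$ eventually lies in the symmetrized sets — one must check the bookkeeping so that a single $t$ works, but monotonicity makes this routine. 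Finally, re-verify each bullet of the $\Gamma$-exhaustion definition: open and definably compact (hence clopen) is built in, nestedness is preserved at every step, and the union is $G$ by the last argument; and $O_t = O_t^{-1}$ holds by the symmetrization.

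The main obstacle I anticipate is the exhaustion condition $G = \bigcup_t O_t$: a priori the neighborhood basis $\{U_t\}$ of $\id_G$ only covers a neighborhood of $\id_G$, not all of $G$, so one genuinely needs the interplay with the ambient $\Gamma$-exhaustion $\{W_\gamma\}$ and left/right translation to ``sweep out'' the rest of $G$, together with Corollary~\ref{minor-corollary} (or a direct use of Proposition~\ref{xh-3}) to compare the two families uniformly in the parameter. The other steps — being clopen, definably compact, nested, and symmetric — are each a short application of the facts already established, so the real content is organizing the indexing so that the translated copies and the neighborhood basis align into one monotone definable family.
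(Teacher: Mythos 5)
There is a genuine gap, and it is exactly at the point you flagged as the ``main obstacle'': the exhaustion condition $G=\bigcup_t O_t$. Every set $U_t=\varphi^{-1}(\B(0,t)\cap V)$ in your construction is contained in the fixed chart domain $\varphi^{-1}(V)$, so the symmetrized family $U_t\cap U_t^{-1}$ has bounded union and can never cover $G$; no amount of reindexing fixes this, and note also that these $U_t$ shrink as $t$ grows, which is the opposite monotonicity from what a $\Gamma$-exhaustion requires. The repair you sketch does not work: the step ``$g^{-1}W_\gamma\subseteq U_t$ for $t$ large by Proposition~\ref{xh-3}'' misapplies that proposition, since it presupposes that $\{U_t\}$ is already a $\Gamma$-exhaustion of $G$ (its union must be all of $G$ and it must be increasing), which is precisely what fails here --- indeed $g^{-1}W_\gamma$ need not even meet the chart. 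And even granting that containment, your conclusion ``$g\in W_\gamma\subseteq gU_t$'' only asserts $\id_G\in U_t$; it does not put $g$ into $U_t\cap U_t^{-1}$, so the exhaustion claim is a non sequitur.

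The missing idea is to \emph{splice} two families rather than force one family to do both jobs from the start. The paper's proof takes the $\Gamma$-exhaustion $\{W_t\}$ of Remark~\ref{xh-2}, translates so that $\id_G\in W_0$, and separately takes a clopen, monotone definable neighborhood basis $\{N_t: t<0\}$ of $\id_G$ (as you did via a chart); it then sets $B_t=W_t$ for $t\ge 0$ and $B_t=W_0\cap N_t$ for $t<0$. This single family is increasing, exhausts $G$ as $t\to+\infty$ (because the $W_t$ do), and shrinks to $\id_G$ as $t\to-\infty$ (because the $N_t$ do), with each member open and definably compact. Your final symmetrization step $O_t=B_t\cap B_t^{-1}$ is then exactly the paper's, and is correct; the earlier parts of your writeup (getting $\id_G$ into the sets, clopen definably compact small neighborhoods via a chart and Lemma~\ref{cb-k}) are also fine. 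So the proof is salvageable, but as written the central covering claim is unproved and the cited proposition does not apply.
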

\begin{proof}
  By Proposition~\ref{xh-2}, the group $G$ admits a $\Gamma$-exhaustion
  $\{W_t : t \in \Gamma_M\}$.  Replacing $\{W_t\}$ with $\{W_{t +
    \gamma}\}$, we may assume that $W_0$ is non-empty.  Replacing
  $\{W_t\}$ with $\{a \cdot W_t\}$, we may assume that $\id_G \in
  W_0$.

  Because $G$ is a definable manifold, there is some
  definable neighborhood basis $\{N_t : t \in \Gamma_M,\ t < 0\}$ such
  that each $N_t$ is clopen, and $N_t$ depends monotonically on $t$.
  Define
  \begin{equation*}
    B_t =
    \begin{cases}
      W_t & t \ge 0 \\ W_0 \cap N_t & t < 0.
    \end{cases}
  \end{equation*}
  Then $\{B_t : t \in \Gamma_M\}$ is a definable neighborhood basis
  and a $\Gamma$-exhaustion.  Lastly, define $O_t = B_t \cap
  B_t^{-1}$.  Then $\{O_t : t \in \Gamma_M\}$ has all the desired
  properties.
\end{proof}

\begin{Prop} \label{up-down}
  Let $\{O_t : t \in \Gamma_M\}$ be a good neighborhood basis of a definable group $G$.
  \begin{enumerate}
  \item For any $t \in \Gamma_M$, there is $t' \in \Gamma_M$ such that
    $O_{t'} \cdot O_{t'} \subseteq O_t$.
  \item For any $t \in \Gamma_M$, there is $t'' \in \Gamma_M$ such that
    $O_t \cdot O_t \subseteq O_{t''}$.
  \end{enumerate}
\end{Prop}

\begin{proof}
  (1) is by continuity.  For (2), note that the set $O_t \cdot O_t$ is
  an image of the definably compact space $O_t \times O_t$ under the
  definable continuous map $(x,y) \mapsto x \cdot y$.  Therefore $O_t
  \cdot O_t$ is definably compact.  Then $t''$ exists by
  Proposition~\ref{prop-bounded}.
\end{proof}

\begin{Lemma} \label{conjugation}
  Let $\{O_t : t \in \Gamma_M\}$ be a good neighborhood basis of a
  definable group $G$.  For every $t, \epsilon \in \Gamma_M$, there is
  $\delta \in \Gamma_M$ such that if $a \in O_\delta$ and $b \in O_t$,
  then $b^{-1} a b \in O_\epsilon$.
\end{Lemma}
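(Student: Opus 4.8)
The plan is to run a compactness argument on the definably compact set $O_t$, using the closed-set formulation of definable compactness from Definition~\ref{defdefc}. For each $\delta \in \Gamma_M$ I would consider
\begin{equation*}
  Y_\delta = \{\, b \in O_t : \text{there is } a \in O_\delta \text{ with } b^{-1}ab \notin O_\epsilon \,\},
\end{equation*}
so that finding $\delta$ with $Y_\delta = \emptyset$ is exactly the desired conclusion. The first task is to see that each $Y_\delta$ is a closed subset of $O_t$ and that the family is definable in $\delta$. I would do this by exhibiting $Y_\delta$ as the image, under the projection $\pi \colon O_\delta \times O_t \to O_t$, of the set $Z_\delta = \{(a,b) \in O_\delta \times O_t : b^{-1}ab \notin O_\epsilon\}$. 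Since conjugation $(a,b) \mapsto b^{-1}ab$ is continuous (the group operations are continuous) and $O_\epsilon$ is clopen, $Z_\delta$ is closed in $O_\delta \times O_t$; and since $O_\delta, O_t$ are open and definably compact (being members of a $\Gamma$-exhaustion), $O_\delta \times O_t$ is definably compact by Fact~\ref{defc}(2), hence so is $Z_\delta$ by Fact~\ref{defc}(\ref{closed-compact}), hence so is $Y_\delta = \pi(Z_\delta)$ by Fact~\ref{defc}(3), and therefore $Y_\delta$ is closed in $O_t$ by Fact~\ref{defc}(\ref{compact-closed}) (as $G$, hence $O_t$, is Hausdorff).

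Next I would check that $\bigcap_{\delta \in \Gamma_M} Y_\delta = \emptyset$: given any $b \in O_t$, conjugation by $b$ is a homeomorphism of $G$ fixing $\id_G$, so $\{a \in G : b^{-1}ab \in O_\epsilon\}$ is an open neighborhood of $\id_G$ and hence contains some $O_{s_0}$ (the $O_s$ form a neighborhood basis at $\id_G$), which shows $b \notin Y_{s_0}$. The family $\{Y_\delta : \delta \in \Gamma_M\}$ is downwards directed: for $\delta_1, \delta_2 \in \Gamma_M$, taking $\delta_3$ to be the smaller of the two gives $O_{\delta_3} \subseteq O_{\delta_1} \cap O_{\delta_2}$ by monotonicity of the $\Gamma$-exhaustion, hence $Y_{\delta_3} \subseteq Y_{\delta_1} \cap Y_{\delta_2}$. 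Were all the $Y_\delta$ non-empty, definable compactness of $O_t$ would force $\bigcap_\delta Y_\delta \neq \emptyset$, contradicting the previous sentence; so some $Y_\delta$ is empty, which says exactly that $b^{-1}ab \in O_\epsilon$ for all $b \in O_t$ and $a \in O_\delta$.

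I do not foresee a real obstacle, only two bookkeeping points: packaging $Y_\delta$ as a closed, definably compact set so that Definition~\ref{defdefc} applies (the product-and-projection description above), and keeping the orientation straight, since the $\delta$ we seek is very negative, so that $O_\delta$ shrinks toward $\id_G$, consistent with the $\Gamma$-exhaustion conventions. A more hands-on alternative would be to pick, via definable Skolem functions, a radius $f(b) \in \Gamma_M$ with $b^{-1}O_{f(b)}b \subseteq O_\epsilon$ for each $b \in O_t$ and then take $\delta$ below all values of $f$ on $O_t$; but this needs $f(O_t) \subseteq \Gamma_M$ to be bounded below, which is essentially definable compactness of $O_t$ again, so the closed-set argument is the more economical route.
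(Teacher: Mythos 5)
Your proof is correct and takes essentially the same route as the paper's: the paper works directly with your $Z_\delta$ (there called $S_\delta = \{(a,b) \in O_\delta \times O_t : b^{-1}ab \notin O_\epsilon\}$), notes each is closed and the family downward directed, and applies definable compactness of $O_0 \times O_t$ to extract a point of $\bigcap_\delta S_\delta$, whose first coordinate must be $\id_G$, giving a contradiction. Your extra step of projecting to $O_t$ and your continuity-of-conjugation verification that $\bigcap_\delta Y_\delta = \emptyset$ are only minor repackagings of that same compactness argument.
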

\begin{proof}
  Define $S_\delta = \{(a,b) \in O_\delta \times O_t : b^{-1} a b
  \notin O_\epsilon\}$.  Suppose for the sake of contradiction that
  $S_\delta \ne \emptyset$ for all $\delta$.  The family $S_\delta$ is
  definable, and depends monotonically on $\delta$.  Each set
  $S_\delta$ is closed, because $O_\epsilon, O_\delta$, and $O_t$ are
  clopen.  By definable compactness of $O_{\delta} \times O_t$, the
  intersection $\bigcap_\delta S_\delta$ is non-empty.  Therefore
  there are $a, b \in G$ such that
  \begin{enumerate}
  \item $a \in O_\delta$ for all $\delta$.
  \item $b \in O_t$.
  \item $b^{-1} a b \notin O_\epsilon$.
  \end{enumerate}
  The first point implies $a = \id_G$, which then implies $b^{-1}
  a b = \id_G \in O_\epsilon$, a contradiction.
\end{proof}

\section{Review of dp-rank} \label{sec:dp-rank-review}

In Section~\ref{sec:gaps} we will make extensive use of dp-rank, so we review its basic properties here.  
\begin{Def}
  Let $\kappa$ be a cardinal and $\Sigma(x)$ be a partial type.  An \emph{ict-pattern} of depth $\kappa$ in $\Sigma(x)$ consists of
  \begin{itemize}
      \item A family of formulas $\{\phi_\alpha(x,y_\alpha)\}_{\alpha < \kappa}$.
      \item An array of parameters $\{b_{\alpha,i}\}_{\alpha < \kappa,~i < \omega}$ with $|b_{\alpha,i}| = |y_\alpha|$.
  \end{itemize}
  such that for any function $\eta : \kappa \to \omega$, the following type is consistent:
  \begin{equation*}
      \Sigma(x) \cup \{\phi_\alpha(x,b_{\alpha,i}) : \alpha < \kappa, ~ i = \eta(\alpha)\} \cup \{\neg \phi_\alpha(x,b_{\alpha,i}) : \alpha < \kappa, ~ i \ne \eta(\alpha)\}.
  \end{equation*}
\end{Def}
\begin{Def}
  The \emph{dp-rank} of a partial type $\Sigma(x)$ is the supremum of cardinals $\kappa$ such that, in some elementary extension $N \succeq M$, there is an ict-pattern of depth $\kappa$ in $\Sigma(x)$.  When there is no supremum, the dp-rank is defined to be $\infty$, a formal symbol greater than all cardinals.
  
  We write the dp-rank of $\Sigma(x)$ as $\dpr(\Sigma)$.  When $\Sigma(x)$ is a complete type $\tp(b/A)$, we write the dp-rank as $\dpr(b/A)$.
\end{Def}
The following facts can be found in \cite{dp-add}, or alternatively \cite[Chapter 4]{NIPguide}.
\begin{Fact}\label{dpr1}
  The following are equivalent in a structure $M$:
  \begin{enumerate}
      \item $M$ is NIP.
      \item $\dpr(x=x) < \infty$.
      \item Every partial type has dp-rank $< \infty$.
  \end{enumerate}
\end{Fact}
\begin{Fact}
  If $\Sigma(x)$ is a partial type over $A$, and if the ambient model $M$ is $|A|^+$-saturated, then $\dpr(\Sigma)$ is the supremum of $\dpr(b/A)$ as $b$ ranges over realizations of $\Sigma(x)$.
\end{Fact}
\begin{Fact}
  If $b \in \acl(A)$, then $\dpr(b/A) = 0$.  If $b \notin \acl(A)$, then $\dpr(b/A) > 0$.
\end{Fact}
\begin{Fact} \label{dpr4}
  For any $b, c, A$, we have
  \begin{equation*}
      \dpr(b/A) \le \dpr(bc/A) \le \dpr(b/cA) + \dpr(c/A).
  \end{equation*}
\end{Fact}
It is also helpful to view dp-rank as a property of definable sets:
\begin{Def}
  If $D$ is a definable set, then the dp-rank of $D$, written $\dpr(D)$, is $\dpr(\phi(x))$ for any formula $\phi(x)$ defining $D$.
\end{Def}
The following facts are easy exercises using Facts~\ref{dpr1}--\ref{dpr4}.
\begin{Fact}
  $\dpr(D) > 0$ if and only if $D$ is infinite.
\end{Fact}
\begin{Fact}
  If $D_1, D_2$ are definable sets, then $\dpr(D_1 \times D_2) = \dpr(D_1) + \dpr(D_2)$.
\end{Fact}
\begin{Fact}
  If $f : D_1 \to D_2$ is a definable injection, then $\dpr(D_1) \le \dpr(D_2)$.  If $f : D_1 \to D_2$ is a definable surjection, then $\dpr(D_1) \ge \dpr(D_2)$.
\end{Fact}
We will need the following about dp-rank in $p$-adically closed fields:
\begin{Fact}[{\cite[Theorem~6.6]{dpExamples}}]
  If $M$ is a $p$-adically closed field, then $\dpr(M) = 1$.
\end{Fact}
\begin{Cor}
  If $M$ is a $p$-adically closed field, then every $n$-type in $M$ has dp-rank at most $n$.
\end{Cor}
In fact, dp-rank in $p$CF agrees with the natural notion of dimension (topological dimension or $\acl$-dimension), by \cite[Exercise~4.38]{NIPguide}.  We will not need this fact, however.

\section{Large gaps} \label{sec:gaps}
In order to apply the strategy of Peterzil and Steinhorn, we need a
technical statement about ``gaps'' in unbounded curves:
\begin{Conj}\label{stuck} 
  Let $G$ be a definable group over a $p$-adically closed field $M$,
  with a good neighborhood basis $\{O_t \mid t \in \Gamma_M\}$.  Let
  $I$ be a 1-dimensional unbounded definable subset of $G$.  Then for
  every $t_0 \in \Gamma_M$, there is $t \in \Gamma_M$ such that
  \begin{equation*}
    \{g \in I \mid g(O_t \setminus O_{t_0}) \cap I = \emptyset\}
  \end{equation*}
  is bounded.
\end{Conj}
The o-minimal analogue of Conjecture~\ref{stuck} holds by an easy
connectedness argument \cite[Lemma 3.8]{Y.-Peterzil-and-C.-Steinhorn}.
But in a $p$-adically closed field, everything is totally disconnected
and we need a completely different approach.  In the end, we will prove Conjecture~\ref{stuck} only in a special case (Proposition~\ref{gaps-1}), namely when $G$ is nearly abelian (Definition~\ref{d-na}).

\begin{Rmk}
  It is useful to consider what a counterexample to
  Conjecture~\ref{stuck} would look like.  For each $t \gg t_0$, there
  would be unboundedly many $g \in I$ such that
  \begin{equation*}
    g (O_t \setminus O_{t_0}) \cap I = \emptyset,
  \end{equation*}
  or equivalently $g O_t \cap I = g O_{t_0} \cap I$.  Around $g$, the
  set $I$ looks like an ``island'' $g O_{t_0} \cap I$ surrounded by a
  very large empty space $g (O_t \setminus O_{t_0})$.  Since $I$ is
  unbounded, there must be infinitely many of these ``islands.''
  Because this holds for \emph{any} $t$, the gaps between the islands
  must become greater and greater as we move towards ``$\infty$''.
\end{Rmk}

The behavior described above is reminiscent of the behavior of the set
$2^{\mathbb{Z}}$ in the group $(\R,+,<)$.  The structure
$(\R,+,<,2^{\mathbb{Z}})$ is NIP \cite[Theorem~6.5]{dep-pairs} but it
does not have finite dp-rank, and this is a direct consequence of the
``large gaps'' in $2^{\mathbb{Z}}$.  In a non-standard elementary
extension, by choosing $a_1 < b_1 < a_2 < b_2 < \cdots < a_n < b_n$
carefully, one can ensure that the map
\begin{gather*}
  \prod_{i = 1}^n (2^{\mathbb{Z}} \cap [a_i, b_i]) \to \R  \\
  (x_1,\ldots,x_n) \mapsto \sum_{i = 1}^n x_i
\end{gather*}
is injective and each set $2^{\mathbb{Z}} \cap [a_i,b_i]$ is infinite,
showing that the model has dp-rank at least $n$ (for arbitrary finite
$n$).

Our approach for attacking Conjecture~\ref{stuck} is based on this
line of argument: take a set $I$ with large gaps and obtain infinite
dp-rank.  Unfortunately, the argument only works in the nearly abelian case
(Proposition~\ref{gaps-1}), though we can salvage a much weaker
statement in the non-abelian case (Proposition~\ref{gaps-2}).

\subsection{Notation}

Let $G$ be a group.  If $H$ is a subgroup of $G$, we let $G/H$ denote the set of left cosets of $H$. 
If $A \subseteq G$, we will write $A/H$
to indicate the image of $A$ in $G/H$. 
If $A, B \subseteq G$, we let $A^B$ indicate $\{b^{-1}ab : a \in A, ~ b \in B\}$. 
Notation like ``$X \setminus Y$'' will always mean set subtraction, rather than quotienting by a group action on the left.

\begin{Def}\label{def-diamond}
  Let $X, Y$ be subsets of a group $G$.  Define
  \begin{equation*}
    X \diamond Y = \{g \in X : gY \cap X = \emptyset\}.
  \end{equation*}
\end{Def}
Note that $X \diamond Y$ depends negatively on $Y$.  We will write
``$A \diamond B \setminus C$'' to mean ``$A \diamond (B \setminus
C)$.''
\begin{Rmk} \label{gap-crossing}
  Suppose $X, Y$ are subgroups of $G$, $S\sq G$, and $a, b \in S \diamond X
  \setminus Y$.  Then
  \begin{equation*}
    aX = bX \implies aY = bY.
  \end{equation*}
  Otherwise, $b = a \delta$ for some $\delta \in X \setminus Y$, and
  so $b \in a (X \setminus Y) \cap S$, contradicting the fact that $a
  (X \setminus Y) \cap S = \emptyset$.
\end{Rmk}

\subsection{The bad gap configuration}

Recall that an \emph{externally definable} set $X$ in a structure $M$
is a set of the form $Y \cap M^n$ for some elementary extension $N
\succ M$ and definable set $Y \subseteq N^n$.  The \emph{Shelah
expansion} $M^{Sh}$ is the expansion of $M$ by all externally
definable sets.  When $M$ is NIP, the Shelah expansion $M^{Sh}$ has
elimination of quantifiers \cite[Proposition~3.23]{NIPguide}.  Using
this, it is easy to see that $M^{Sh}$ has the same dp-rank as $M$.

\begin{Rmk} \label{ext-chain}
  Let $\mathcal{F}$ be a collection of definable subsets of $M^n$.  If
  the sets in $\mathcal{F}$ are uniformly definable, and $\mathcal{F}$
  is linearly ordered by inclusion, then the sets $\bigcup
  \mathcal{F}$ and $\bigcap \mathcal{F}$ are externally definable
  \cite[Kaplan's Lemma 3.4]{hhj-v-top}.
\end{Rmk}
Later, we will use Remark~\ref{ext-chain} in conjunction with Proposition~\ref{up-down} to construct externally definable subgroups of definable groups.

\begin{Def}\label{bad-config}
  Let $G$ be a definable group in a structure $M$.  A \emph{bad gap configuration} in $G$ consists of the following
  \begin{itemize}
      \item A finite subgroup $F \subseteq G$.
      \item Externally definable subgroups
      \begin{equation*}
      \cdots \subseteq Y_2 \subseteq Y_1 \subseteq Y_0 \subseteq X_0
      \subseteq X_1 \subseteq \cdots \subseteq G
      \end{equation*}
      \item An externally definable subset $I \subseteq G$.
  \end{itemize}
  such that the following conditions hold:
  \begin{itemize}
  \item $Y_i^F \subseteq Y_i$ for all $i$.
  \item $Y_i^{X_i} \subseteq Y_{i-1}$, for $i > 0$.
  \item $(X_i \cap (I \diamond X_{i-1} \setminus F Y_{i-1}))/X_{i-1}$ is
    infinite, for $i > 0$.
  \end{itemize}
  We say that a bad gap configuration is \emph{($A$-)definable} if all of $F$, the $X_i$, $Y_i$, and $I$ are ($A$-)definable.
\end{Def}

\begin{Lemma}\label{new-config-0}
  If $G$ has finite dp-rank, then there is no bad gap configuration in $G$.
\end{Lemma}
\begin{proof}
  Let $(F,\{Y_i\},\{X_i\},I)$ be a bad gap configuration.
  Replacing $M$ with the Shelah expansion $M^{Sh}$, we may assume that
  the bad gap configuration is definable.
  Passing to an elementary
  extension and naming parameters, we may assume that $M$ is
  $\aleph_1$-saturated and the bad gap configuration is $\emptyset$-definable.

  Note that $FY_i = Y_iF$ is a subgroup of $G$, and that the index of
  $Y_i$ if $FY_i$ is finite, no more than $|F|$.  Let $D_i$ be the
  definable set $X_i \cap (I \diamond X_{i-1} \setminus F Y_{i-1})$.
  By assumption, $D_i/X_{i-1}$ is infinite.
  \begin{Claim1}  
    Suppose $a_i, a'_i \in D_i$ for $i = 1, \ldots, n$, and suppose
    \begin{equation}
      Y_n a_n a_{n-1} \cdots a_1 = Y_n a'_n a'_{n-1} \cdots a'_1. \label{eq1}
    \end{equation}
    Then $a_n F Y_{n-1} = a'_n F Y_{n-1}$.  If moreover $a_n Y_{n-1} =
    a'_n Y_{n-1}$, then
    \begin{equation}
      Y_{n-1} a_{n-1} \cdots a_1 = Y_{n-1} a'_{n-1} \cdots a'_1. \label{eq2}
    \end{equation}
  \end{Claim1}
   
  \begin{claimproof}
    Note that $a_i, a_i' \in X_i$.
    Equation (\ref{eq1}) implies that
    \begin{equation}
      a'_n a'_{n-1} \cdots a'_1 = \epsilon a_n a_{n-1} \cdots a_1 =
      a_n \epsilon^{a_n} a_{n-1} \cdots a_1 \label{eq3}
    \end{equation}
    for some $\epsilon \in Y_n$.  Then $\epsilon^{a_n} \in Y_n^{X_n}
    \subseteq Y_{n-1} \subseteq X_{n-1}$.  For $i < n$, we have $a_i,
    a'_i \in X_i \subseteq X_{n-1}$.  Therefore (\ref{eq3}) implies
    that $a'_n X_{n-1} = a_n X_{n-1}$.  Both $a'_n$ and $a_n$ are in
    $I \diamond X_{n-1} \setminus F Y_{n-1}$, so by
    Remark~\ref{gap-crossing} we have $a'_n F Y_{n-1} = a_n F Y_{n-1}$
    as desired.  Now suppose that $a_n Y_{n-1} = a'_n Y_{n-1}$.  Then
    $a'_n = a_n \delta$ for some $\delta \in Y_{n-1}$.  Then Equation
    (\ref{eq3}) implies
    \begin{gather*}
      a_n \epsilon^{a_n} a_{n-1} \cdots a_1 = a_n \delta a'_{n-1} a'_{n-2} \cdots a'_1 \\
      \epsilon^{a_n} a_{n-1} \cdots a_1 = \delta a'_{n-1} a'_{n-2} \cdots a'_1.
    \end{gather*}
    Both $\epsilon^{a_n}$ and $\delta$ are in $Y_{n-1}$, so Equation
    (\ref{eq2}) holds.
  \end{claimproof}
  For any $n \in \N$, we claim that $\dpr(G) \ge n$.
  By assumption, the
  interpretable set $D_i/X_{i-1}$ is infinite.  The interpretable set
  $D_i/Y_{i-1}$ is even bigger, because $Y_{i-1} \subseteq X_{i-1}$.
  By the properties of dp-rank in Section~\ref{sec:dp-rank-review}, $\prod_{i = 1}^n D_i/Y_{i-1}$ has dp-rank
  at least $n$.  Take a tuple $\bar{b} \in \prod_{i = 1}^n
  D_i/Y_{i-1}$ such that $\dpr(\bar{b}/\emptyset) \ge n$.  Each $b_i$
  is a coset $a_iY_{i-1}$ for some $a_i \in D_i$.  Let $c = a_na_{n-1}
  \cdots a_1 \in G$.
  \begin{Claim2}
    For each $i$, we have $b_i \in \acl(c, b_{i+1}, \ldots, b_n)$.
  \end{Claim2}
  \begin{claimproof}
    Let $S$ be the set of $(a'_1,\ldots,a'_n) \in \prod_j D_j$ such
    that
    \begin{itemize}
    \item $a'_n a'_{n-1} \cdots a'_1 = c$.
    \item $a'_j Y_{j-1} = b_j = a_j Y_{j-1}$ for $j > i$.
    \end{itemize}
    Then $(a_1,\ldots,a_n) \in S$ and $S$ is definable over $c,
    b_{i+1}, \ldots, b_n$.  If $(a'_1,\ldots,a'_n) \in S$, then
    \begin{equation*}
      Y_n a'_n a'_{n-1} \cdots a'_1 = Y_n c
      = Y_n a_n a_{n-1} \cdots a_1.
    \end{equation*}
    By Claim 1 applied $(n-i+1)$ times, we see that $a_i F Y_{i-1} = a'_i F
    Y_{i-1}$.  We have shown
    \begin{equation*}
      \{a'_i F Y_{i-1} : (a'_1,\ldots,a'_n) \in S\} = \{a_i F Y_{i-1}\}.
    \end{equation*}
    It follows that $a_i F Y_{i-1}$ is definable over $c, b_{i+1},
    \ldots, b_n$.  The fibers of the map $G/Y_{i-1} \to G/(FY_{i-1})$
    are finite, and so $a_i Y_{i-1} = b_i$ is algebraic over $c,
    b_{i+1}, \ldots, b_n$.
  \end{claimproof}
  By Claim 2 and induction, $\bar{b} \in \acl(c)$.  Therefore \[n \le
  \dpr(\bar{b}/\emptyset) \le \dpr(c/\emptyset) \le \dpr(G).\] As $n$
  was arbitrary, $G$ has infinite dp-rank, a contradiction.
\end{proof}

\subsection{The saturated case} \label{ssec-sat}

Until Subsection~\ref{ssec-unsat}, 
we will work in a monster model $\M \models
p\mathrm{CF}$.  Fix a definable group $G$, not definably compact, and fix a good
neighborhood basis $\{O_t : t \in \Gamma_\M\}$ in the sense of Definition~\ref{def-gnb}.
\begin{Lemma}\label{new-config-1}
  There is no bad gap configuration in $G$.
\end{Lemma}
\begin{proof}
  
    For definable sets in $p$CF, dp-rank agrees
    with dimension.  In particular, dp-rank is finite.  Therefore Lemma~\ref{new-config-0} applies to $G$.
\end{proof}
Recall from Definition~\ref{def-bounded} and Proposition~\ref{prop-bounded}(\ref{pb-1}) that a subset
$S \subseteq G(\M)$ is \emph{bounded} if $S \subseteq O_t$ for some $t
\in \Gamma_\M$.
\begin{Lemma}\label{ext-up}
  If $S \subseteq G(\M)$ is bounded, then $S \subseteq X$ for some
  bounded externally definable subgroup $X \subseteq G(\M)$.
\end{Lemma}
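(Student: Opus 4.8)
The plan is to exhibit, for a bounded set $S \subseteq O_t$, a single ball-like neighborhood $O_s$ with $t \le s$ such that $\bigcup_{n \ge 1} (O_t)^{\cdot n}$ — the subgroup generated by $O_t$ — is contained in $O_s$, and then to recognize that this generated subgroup is externally definable. Concretely, set $X_n = (O_t)^{\cdot n}$, the set of $n$-fold products of elements of $O_t$. Since $O_t$ is symmetric ($O_t = O_t^{-1}$) and contains $\id_G$, we have $X_1 \subseteq X_2 \subseteq \cdots$, and $X := \bigcup_n X_n = \langle O_t \rangle$ is a subgroup containing $S$. Each $X_n$ is definable (the image of $(O_t)^n$ under iterated multiplication), and the family $\{X_n\}$ is uniformly definable and linearly ordered by inclusion, so by Remark~\ref{ext-chain} the union $X$ is externally definable. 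It remains to check that $X$ is bounded, i.e.\ $X \subseteq O_s$ for some $s$.

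For boundedness, the key input is Lemma~\ref{up-down}(2): iterating it, for each $n$ there is $s_n \in \Gamma_\M$ with $X_n \subseteq O_{s_n}$ (more precisely, with $O_t \cdot O_t \subseteq O_{t''}$ one builds up $X_n$ inductively, each $X_n$ being definably compact as a continuous image of $(O_t)^n$, hence bounded by Proposition~\ref{xh-3}). This alone only gives boundedness of each $X_n$ separately, not of $X$; the actual boundedness of $X$ must come from saturation plus the fact that $\langle O_t \rangle$ is already externally definable. So the cleanest route is: first establish $X$ is externally definable via Remark~\ref{ext-chain}, then argue that an externally definable subgroup which is a union of a chain of bounded definable sets is itself bounded. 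For this last point, suppose toward contradiction that $X \not\subseteq O_s$ for every $s \in \Gamma_\M$; since $\M$ is a monster model and $X$ is externally definable (hence definable in $\M^{Sh}$, which shares the relevant saturation), the type saying ``$x \in X$ and $x \notin O_s$'' over all $s \in \Gamma_\M$ is finitely satisfiable, so realized — but any realization $g$ lies in some $X_n$, and $X_n \subseteq O_{s_n}$ is bounded, a contradiction.

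The main obstacle is precisely the last boundedness argument: one has to be careful that ``externally definable'' gives enough to invoke a compactness/saturation argument against the value group $\Gamma_\M$. The point is that $X$ is cut out, inside $\M$, by a formula of the Shelah expansion $\M^{Sh}$, and $\Gamma_\M$ is an internal (interpretable) set in $\M$, so a chain condition argument applies: the descending family $\{X \setminus O_s : s \in \Gamma_\M\}$ is a uniformly (externally) definable downward-directed family, and if every member were nonempty it would have nonempty intersection by the relevant saturation of $\M$ — but every element of $X$ lies in some $X_n \subseteq O_{s_n}$, so the intersection is empty. Thus some $X \setminus O_s$ is empty and $X \subseteq O_s$ is bounded, completing the proof.
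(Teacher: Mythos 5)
There are two genuine problems with your write-up. First, your appeal to Remark~\ref{ext-chain} does not apply to the family $X_n=(O_t)^{\cdot n}$: Kaplan's lemma requires the chain to be \emph{uniformly} definable, i.e.\ consisting of instances $\phi(x;b)$ of a single formula with varying parameters, whereas the $n$-fold product sets are defined by formulas whose quantifier block grows with $n$. So external definability of the generated subgroup $\langle O_t\rangle$ is not justified (and is not needed). The paper sidesteps this by not taking the generated subgroup at all: using Lemma~\ref{up-down}(2) it chooses $t_0<t_1<t_2<\cdots$ with $O_{t_i}\cdot O_{t_i}\subseteq O_{t_{i+1}}$ and sets $X=\bigcup_{i<\omega}O_{t_i}$. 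This $X$ is a union of a uniformly definable chain (so Remark~\ref{ext-chain} does apply), it is symmetric since each $O_{t_i}$ is, and it is closed under multiplication precisely because of the nesting condition; it contains $S\subseteq O_{t_0}$. Your construction can be repaired by making exactly this substitution, but as written the external definability step has a hole.

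Second, your boundedness argument is both unnecessary and unsound. The ``obstacle'' you identify is illusory: every element of your $X$ lies in some $X_n\subseteq O_{s_n}$, and $\{s_n : n<\omega\}$ is a countable subset of $\Gamma_\M$, so by saturation of the monster $\M$ there is $s_\omega\in\Gamma_\M$ above all $s_n$, giving $X\subseteq O_{s_\omega}$ directly --- this is exactly the one-line argument in the paper ($X\subseteq O_{t_\omega}$ for $t_\omega$ above all $t_i$). The workaround you use instead is not valid: you invoke a compactness/saturation principle for the externally definable set $X$ over the parameter family $\{O_s : s\in\Gamma_\M\}$, but the Shelah expansion $\M^{Sh}$ is not saturated in general (saturation of $\M$ is not inherited by $\M^{Sh}$), and even saturation of $\M$ itself only controls types over \emph{small} parameter sets, while $\Gamma_\M$ has monster size and cofinality at least the saturation cardinal, so the finitely satisfiable partial type ``$x\in X$ and $x\notin O_s$ for all $s$'' need not be realized in $\M$. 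Replace that paragraph by the countable-chain/saturation observation above and the proof goes through.
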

\begin{proof}
  Take $t_0 \in \Gamma_\M$ such that $S \subseteq O_{t_0}$.  By
  Proposition~\ref{up-down}, we can build an ascending sequence
  \begin{equation*}
    t_0 < t_1 < t_2 < \cdots
  \end{equation*}
  in $\Gamma_\M$ such that $O_{t_i} \cdot O_{t_i} \subseteq
  O_{t_{i+1}}$ for each $i$.  By saturation, we can also find some
  $t_\omega > t_i$ for all finite $i$.  Set $X = \bigcup_{i < \omega} O_{t_i}$.
  The set $X$ is externally definable (Remark~\ref{ext-chain}).  The
  set $X$ is bounded, because $X \subseteq O_{t_\omega}$.  We have $X
  = X^{-1}$ because $O_{t_i} = O_{t_i}^{-1}$ for each $i$.  Lastly,
  $X$ is closed under the group operation by choice of the $t_i$'s.
\end{proof}

\begin{Lemma}\label{annoying}
  Let $I$ be an unbounded subset of $G$.  Let $X \subseteq G(\M)$ be a
  bounded subgroup.  Then there is an externally definable bounded
  subgroup $X' \supseteq X$ such that $(X' \cap I)/X$ is infinite.
\end{Lemma}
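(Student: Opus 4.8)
The plan is to find $X'$ by taking a union of an ascending chain of bounded externally definable subgroups, arranged so that at least one step in the chain captures infinitely many points of $I$ modulo $X$. Since $I$ is unbounded and $X$ is bounded, $I \not\subseteq X$, so there is some $a_1 \in I \setminus X$. Using Lemma~\ref{up-down}(2) and saturation exactly as in Lemma~\ref{ext-up}, I would build an ascending chain of bounded externally definable subgroups $X = X^{(0)} \subseteq X^{(1)} \subseteq X^{(2)} \subseteq \cdots$ (each obtained from the previous one by adjoining one more element of $I$ and then closing up to a bounded subgroup, which is possible by Lemma~\ref{ext-up} applied to $X^{(i)} \cup \{a_{i+1}\}$ where $a_{i+1} \in I$ is chosen outside $X^{(i)}$ if possible). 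Then $X^{(\infty)} = \bigcup_i X^{(i)}$ would be externally definable (Remark~\ref{ext-chain}) and bounded (by saturation, as in Lemma~\ref{ext-up}); the issue is whether $(X^{(\infty)} \cap I)/X$ is infinite, and whether $X^{(\infty)}$ itself can serve as $X'$ or whether one intermediate $X^{(i)}$ does.

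More carefully: either the process of choosing $a_{i+1} \in I \setminus X^{(i)}$ can be continued for all $i < \omega$, in which case the elements $a_1, a_2, \ldots$ have pairwise distinct cosets modulo... no, wait — $a_{i+1} \notin X^{(i)}$ only guarantees $a_{i+1} X \ne a_j X$ would need $a_{i+1} \notin X^{(i)} \supseteq a_j X$, which does hold since $a_j \in X^{(j)} \subseteq X^{(i)}$ for $j \le i$, but we also need $X \subseteq X^{(i)}$ so that $a_j X \subseteq X^{(i)}$; this is fine. So in this case the $a_i$ all lie in $X^{(\infty)} \cap I$ with distinct cosets mod $X$, giving $(X^{(\infty)} \cap I)/X$ infinite, and we take $X' = X^{(\infty)}$. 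Otherwise the process halts at some finite stage $i$, meaning $I \subseteq X^{(i)}$; but $X^{(i)}$ is bounded, contradicting that $I$ is unbounded. Hence only the first case occurs, and $X' = X^{(\infty)}$ works.

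The main obstacle is bookkeeping around the definable Skolem functions / choice needed to pick the $a_{i+1}$ uniformly enough that the resulting chain $\{X^{(i)}\}$ is a \emph{uniformly} definable (externally) family, so that Remark~\ref{ext-chain} applies to $\bigcup_i X^{(i)}$. The cleanest fix is probably to not choose the $a_i$ one at a time but instead, working inside the already-constructed chain $\{O_{t_i}\}$ from the proof of Lemma~\ref{ext-up} (applied to a bound for $X$), observe that each $O_{t_i} \cdot X \cdot O_{t_i}$ or rather each subgroup $\langle X, O_{t_i}\rangle$ is bounded and externally definable, these form a uniformly definable chain, and their union $X'$ is bounded (below $O_{t_\omega} \cdot X$, which is bounded since $X$ is). Then $X' \supseteq X$ is a bounded externally definable subgroup, and since $I$ is unbounded while $X'$ is bounded, $I \not\subseteq X'$ is false — rather we need the reverse; let me instead take $X'$ large enough to contain, for each $n$, $n$-many elements of $I$: since $I \setminus O_{t_i}$ is nonempty for every $i$ (as $I$ is unbounded) we can pick $a_i \in I \setminus O_{t_i}$, and then $a_1, \ldots, a_n \in \langle X, O_{t_n}, a_1, \ldots, a_n\rangle =: Z_n$, a bounded externally definable subgroup; by saturation these $Z_n$ sit inside a single bounded externally definable subgroup $X'$ (take the union and bound it above using an upper bound $t_\omega$ for all the radii involved), and $a_1, \ldots, a_n, \ldots$ have distinct cosets mod $X$ provided we additionally arrange $a_i \notin \langle X, a_1, \ldots, a_{i-1}\rangle$, which is possible since that subgroup is bounded and $I$ is not. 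This gives $(X' \cap I)/X$ infinite, completing the proof.
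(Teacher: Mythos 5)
Your core idea---produce infinitely many elements of $I$ lying in pairwise distinct cosets of $X$ and then enclose them, together with $X$, in a bounded externally definable subgroup---is the right one, and your coset bookkeeping (choosing $a_{i+1}$ outside a bounded subgroup containing $X$ and $a_1,\dots,a_i$, which is possible since that subgroup is bounded and $I$ is not) is correct. The genuine gap is in how you produce $X'$: in each of your constructions you want $X'$ to be the union of a chain built by hand ($\bigcup_i X^{(i)}$, or $\bigcup_n \langle X, O_{t_n}, a_1,\dots,a_n\rangle$), and the external definability of such a union is never established. Remark~\ref{ext-chain} applies only to a linearly ordered family of \emph{uniformly definable} sets; your groups $X^{(i)}$ and $Z_n$ are not definable, are not uniformly anything, and need not even be externally definable, since $X$ is an arbitrary bounded subgroup and a subgroup generated by an externally definable set has no reason to be externally definable. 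In particular the assertion that the groups $\langle X, O_{t_i}\rangle$ are externally definable and ``form a uniformly definable chain'' is unjustified, so your proposed fix does not repair the obstacle you yourself identified.

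The repair is simpler than what you attempt, and it is what the paper does: you never need the union of your chain at all. Once you have $a_1,a_2,\ldots \in I$ with pairwise distinct cosets $a_iX$ (the paper obtains these even more cheaply: if $I/X$ were finite, then $I$ would be covered by finitely many translates of a definably compact $O_t \supseteq X$, making $I$ bounded), observe that the single countable set $\{a_1,a_2,\ldots\}\cup X$ is bounded, by saturation of $\Gamma_{\M}$, and apply Lemma~\ref{ext-up} once to this set. Lemma~\ref{ext-up} already performs the chain-and-union construction internally, using the uniformly definable family $\{O_{t_i}\}$ to which Remark~\ref{ext-chain} genuinely applies, and it returns a bounded externally definable subgroup $X' \supseteq \{a_1,a_2,\ldots\}\cup X$; then $(X'\cap I)/X$ is infinite, witnessed by the cosets $a_iX$.
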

\begin{proof}
  We claim that $I/X$ is infinite.  Otherwise, $I$ is contained in a
  finite union of cosets: $I \subseteq \bigcup_{i = 1}^n a_i X$.  Take
  $t \in \Gamma_M$ such that $X \subseteq O_t$.  Then $I$ is a subset
  of the definably compact set $\bigcup_{i = 1}^n a_i O_t$, so $I$ is
  bounded, a contradiction.

  Now take $a_1, a_2, a_3, \ldots \in I$ such that the cosets $a_iX$
  are pairwise distinct.  By saturation, there is some $t \in \Gamma$
  such that $\{a_1,a_2,\ldots\} \subseteq O_t$.  Then
  $\{a_1,a_2,\ldots\}$ and $X$ are bounded.  By Lemma~\ref{ext-up},
  there is an externally definable bounded subgroup $X'$ containing
  $\{a_1,a_2,\ldots\} \cup X$.  Then $(X' \cap I)/X$ is infinite,
  witnessed by the $a_iX$.
\end{proof}

Recall from Definition~\ref{d-na} that $G$ is \emph{nearly abelian} if there is a definably compact
  definable normal subgroup $K \subseteq G$ with $G/K$ abelian.
Equivalently, $G$ is nearly abelian if there is a definably compact
subgroup $K$ containing the derived group $[G,G]$.

\begin{Lemma}\label{unbounded-I}
  Suppose that $G$ is nearly abelian.  Let $I$ be an unbounded
  definable subset of $G(\M)$.  For any bounded set $A$, there is $t
  \in \Gamma_\M$ such that $I \diamond O_t \setminus A$ is bounded.
\end{Lemma}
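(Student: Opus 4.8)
The plan is to prove the statement by contradiction, reducing to Lemma~\ref{new-config}. So suppose, towards a contradiction, that $I \diamond O_t \setminus A$ is unbounded for every $t \in \Gamma_\M$; I will build a configuration as in Lemma~\ref{new-config} and conclude that $\M$ has infinite dp-rank, contradicting the fact that dp-rank coincides with dimension in $p$CF.

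First I would absorb the hypothesis that $G$ is nearly abelian into a single bounded normal subgroup. Fix a definably compact normal subgroup $K \trianglelefteq G$ with $G/K$ abelian. Then every subgroup of $G$ containing $K$ is normal in $G$: its image in the abelian group $G/K$ is normal, and preimages of normal subgroups are normal. Since $A$ and $K$ are both bounded, $A \cup K$ is bounded, so Lemma~\ref{ext-up} yields a bounded externally definable subgroup $Y$ with $A \cup K \subseteq Y$; as $K \subseteq Y$, the group $Y$ is normal in $G$.

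Next I would construct, by induction on $i \geq 0$, an increasing chain of bounded externally definable subgroups $Y = X_0 \subseteq X_1 \subseteq X_2 \subseteq \cdots$ with $\bigl(X_i \cap (I \diamond (X_{i-1} \setminus Y))\bigr)/X_{i-1}$ infinite for every $i > 0$. Suppose $X_{i-1}$ has been built. Since $X_{i-1}$ is bounded, $X_{i-1} \subseteq O_t$ for some $t$; because $A \subseteq Y$, no element of $X_{i-1} \setminus Y$ lies in $A$, so $X_{i-1} \setminus Y \subseteq O_t \setminus A$. As $\diamond$ is antitone in its second argument, $I \diamond (X_{i-1}\setminus Y) \supseteq I \diamond (O_t \setminus A)$, which is unbounded by our standing assumption. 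Thus $J_i := I \diamond (X_{i-1}\setminus Y)$ is an unbounded subset of $G$, and Lemma~\ref{annoying}---whose statement and proof only use that the given set is an unbounded subset of $G$, not that it is definable---provides a bounded externally definable subgroup $X_i \supseteq X_{i-1}$ with $(X_i \cap J_i)/X_{i-1}$ infinite.

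Finally I would apply Lemma~\ref{new-config} with $F := \{\id_G\}$, with $Y_i := Y$ for all $i \geq 0$, and with the $X_i$ just constructed (and with the externally definable set $I$, which here is simply our definable $I$). The nesting $\cdots \subseteq Y_1 \subseteq Y_0 \subseteq X_0 \subseteq X_1 \subseteq \cdots$ is immediate; $Y_i^F = Y_i$ because $F$ is trivial; $Y_i^{X_i} = Y \subseteq Y = Y_{i-1}$ because $Y$ is normal in $G$; and since $FY_{i-1} = Y$, the set $X_i \cap (I \diamond X_{i-1} \setminus FY_{i-1})$ is exactly $X_i \cap J_i$, whose quotient by $X_{i-1}$ was arranged to be infinite. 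Lemma~\ref{new-config} then forces $\M$ to have infinite dp-rank, contradicting the fact that dp-rank is finite in $p$CF. Therefore some $t \in \Gamma_\M$ satisfies the conclusion. The only slightly delicate points are that subgroups containing $K$ are normal (routine) and that Lemma~\ref{annoying} is being applied to an externally definable rather than definable set (unproblematic, since its proof never uses definability of the input); all the substantive work has already been done in Lemmas~\ref{ext-up}, \ref{annoying}, and \ref{new-config}.
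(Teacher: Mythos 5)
Your proposal is correct and follows essentially the same route as the paper: assume failure, use Lemma~\ref{ext-up} to absorb $A\cup K$ into a bounded externally definable (hence normal) subgroup, build the chain via Lemma~\ref{annoying} using antitonicity of $\diamond$, and apply Lemma~\ref{new-config} with trivial $F$ and constant $Y_i$ to contradict finiteness of dp-rank. The minor variations (inlining the paper's ``Claim'' and treating $i=1$ uniformly with the diamond set) are immaterial.
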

\begin{proof}
  Suppose not.
  \begin{Claim}
    For any bounded sets $C \supseteq B \supseteq A$, the set $I
    \diamond C \setminus B$ is unbounded.
  \end{Claim}
  \begin{claimproof}
    Take $t \in \Gamma_\M$ such that $C \subseteq O_t$.  Then $I
    \diamond C \setminus B$ contains the unbounded set $I \diamond O_t
    \setminus A$, because $O_t \setminus A \supseteq C \setminus B$.
  \end{claimproof}

  Let $K$ be the normal subgroup witnessing near-abelianity.  By
  Lemma~\ref{ext-up}, there is a bounded externally definable subgroup
  $X_0 \supseteq A \cup K$.  By Lemma~\ref{annoying} we can
  recursively build an increasing chain of bounded externally
  definable subgroups
  \begin{equation*}
    X_0 \subseteq X_1 \subseteq X_2 \subseteq \cdots
  \end{equation*}
  such that
  \begin{itemize}
  \item $(X_1 \cap I)/X_0$ is infinite.
  \item For $n > 1$, $(X_n \cap (I \diamond X_{n-1} \setminus
    X_0))/X_{n-1}$ is infinite.  This is possible because $I \diamond
    X_{n-1} \setminus X_0$ is unbounded by the claim. 
  \end{itemize}
  Let $Y_i = X_0$ for all $i$, and let $F = \{\id_G\}$.  Note $X_0$ is
  normal, because it contains $K$ which contains $[G,G]$.  
  We have constructed a bad gap configuration in $G$, contradicting Lemma~\ref{new-config-1}.
\end{proof}

\begin{Lemma}\label{ext-down}
  If $S \subseteq G(\M)$ is a neighborhood of $\id_G$, then $S
  \supseteq X$ for some externally definable open subgroup $X
  \subseteq G(\M)$.  If, in addition, $B \subseteq G(\M)$ is a bounded
  set, then we can choose the group $X$ to ensure $X^B \subseteq X$.
\end{Lemma}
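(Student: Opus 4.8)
The plan is to build the desired subgroup $X$ as a countable intersection of neighborhoods, in analogy with the ascending chain construction in Lemma~\ref{ext-up}, but run in the opposite direction. Since $S$ is a neighborhood of $\id_G$, there is some $t_0 \in \Gamma_\M$ with $O_{t_0} \subseteq S$. Using Lemma~\ref{up-down}(1) repeatedly, I would choose a descending sequence $t_0 < t_1 < t_2 < \cdots$ in $\Gamma_\M$ with $O_{t_{i+1}} \cdot O_{t_{i+1}} \subseteq O_{t_i}$ for each $i$ (here I mean the sequence of \emph{radii} is increasing, so the balls $O_{t_i}$ are shrinking). By saturation of $\M$, pick $t_\omega > t_i$ for all finite $i$, and set $X = \bigcap_{i < \omega} O_{t_i}$. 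Then $X$ is externally definable by Remark~\ref{ext-chain}, it is contained in $O_{t_0} \subseteq S$, it is symmetric because each $O_{t_i}$ is, and it is closed under multiplication: if $a, b \in X$, then for each $i$ we have $a, b \in O_{t_{i+1}}$, so $ab \in O_{t_{i+1}} \cdot O_{t_{i+1}} \subseteq O_{t_i}$, whence $ab \in \bigcap_i O_{t_i} = X$. Finally, $X$ is open: it contains $O_{t_\omega}$, which is a neighborhood of $\id_G$ (since $t_\omega > t_0$ and $\{O_t\}$ is a neighborhood basis), so $X$ is a union of cosets of an open subgroup, hence open — or more directly, $X$ is an externally definable subgroup containing an open subgroup and is therefore open.

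For the second assertion, given the bounded set $B$, I would interleave the conjugation-control condition into the construction. By Lemma~\ref{conjugation}, for each target radius $\epsilon \in \Gamma_\M$ there is $\delta \in \Gamma_\M$ such that $O_\delta^{B'} \subseteq O_\epsilon$, where $B' \supseteq B$ is any ball from the basis containing $B$ (such a $B'$ exists since $B$ is bounded); strictly we apply Lemma~\ref{conjugation} with the parameter $t$ chosen so that $B \subseteq O_t$, and it gives $b^{-1} a b \in O_\epsilon$ whenever $a \in O_\delta$ and $b \in O_t$. So when building the descending sequence $\{t_i\}$, at each stage I would first invoke Lemma~\ref{up-down}(1) to get the multiplicative shrinking, and then further shrink using Lemma~\ref{conjugation} so that $O_{t_{i+1}}^{O_t} \subseteq O_{t_i}$ as well, where $O_t \supseteq B$ is fixed at the start. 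Then for $a \in X$ and $b \in B \subseteq O_t$, for each $i$ we have $a \in O_{t_{i+1}}$, so $b^{-1} a b \in O_{t_i}$; intersecting over $i$ gives $b^{-1} a b \in X$, i.e.\ $X^B \subseteq X$ as required. The external definability, openness, symmetry, and containment in $S$ go through exactly as before, since we have only passed to a faster-shrinking subsequence.

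I expect the main subtlety, rather than a genuine obstacle, to be bookkeeping: one must make sure the single auxiliary radius $t$ with $B \subseteq O_t$ is chosen once and for all at the start so that Lemma~\ref{conjugation} can be applied uniformly at every stage (the $\delta$ produced by Lemma~\ref{conjugation} depends on the target $\epsilon = t_i$ but not on which $b \in O_t$ we conjugate by), and that the two shrinking requirements at each step can be satisfied simultaneously — which they can, since $\Gamma_\M$ is linearly ordered and we just take the larger of the two radii demanded by Lemma~\ref{up-down}(1) and Lemma~\ref{conjugation}. Saturation of $\M$ is used exactly once, to find $t_\omega$ bounding the countable increasing sequence of radii, guaranteeing that $X \subseteq O_{t_\omega}$ is bounded — though boundedness of $X$ is not actually asserted in the statement, it costs nothing and mirrors Lemma~\ref{ext-up}.
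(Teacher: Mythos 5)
Your proposal is correct and follows essentially the same route as the paper: a shrinking chain of basis neighborhoods $O_{t_i}$ chosen via Lemma~\ref{up-down}(1) and Lemma~\ref{conjugation} (with $B \subseteq O_t$ fixed once) so that $O_{t_{i+1}} \cdot O_{t_{i+1}} \subseteq O_{t_i}$ and $O_{t_{i+1}}^{O_t} \subseteq O_{t_i}$, with the intersection externally definable by Remark~\ref{ext-chain} and openness obtained from a basis element below the whole chain via saturation. The only (harmless) discrepancy is notational: since the good neighborhood basis is a $\Gamma$-exhaustion, $O_t$ grows with $t$, so the paper indexes the shrinking chain by $t_0 > t_1 > \cdots$ and takes $t_\omega$ \emph{below} all $t_i$, whereas you invert the convention; your intent (shrinking balls, with $O_{t_\omega}$ inside them all) is clearly stated and the argument is unaffected.
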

\begin{proof}
  Take $t_0 \in \Gamma_M$ such that $S \supseteq O_{t_0}$.  By Proposition~\ref{up-down} and Lemma~\ref{conjugation}, there is a descending
  sequence
  \begin{equation*}
    t_0 > t_1 > t_2 > \cdots
  \end{equation*}
  in $\Gamma_\M$ such that $O_{t_{i+1}} \cdot O_{t_{i+1}} \subseteq
  O_{t_i}$ and also $O_{t_{i+1}}^B \subseteq O_{t_i}$.  Take $X =
  \bigcap_{i = 1}^\infty O_{t_i}$.  Then $X$ is an externally
  definable subgroup with $X^B \subseteq X$.  We can take some
  $t_\omega$ less than all the $t_i$'s, and then $O_{t_\omega}
  \subseteq X$.  Therefore $X$ has interior, and is an open subgroup.
\end{proof}

\begin{Lemma}\label{confusion}
  Let $I$ be an unbounded definable subset of $G(\M)$.  Let $F$ be a
  finite subgroup of $G(\M)$.  Then there exist $t, t' \in \Gamma_\M$
  such that $I \diamond O_t \setminus (F \cdot O_{t'})$ is bounded.
\end{Lemma}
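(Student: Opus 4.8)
The plan is to reduce to Lemma~\ref{unbounded-I} by absorbing the finite subgroup $F$. The point is that Lemma~\ref{unbounded-I} requires $G$ to be nearly abelian, but Lemma~\ref{confusion} does not, so we cannot simply apply it directly; instead, we want to work modulo a bounded subgroup in which $F$ sits, and in which conjugation by bounded elements behaves well. First I would note that $F$ is a finite — hence bounded — subgroup, so by Lemma~\ref{ext-up} there is a bounded externally definable subgroup $X \supseteq F$. After replacing $M$ by its Shelah expansion (which preserves dp-rank and hence finiteness of dimension, and turns $X$ into an honest definable object), we may treat $X$ as definable. The set $F \cdot O_{t'}$ we want to produce is, for $t'$ with $O_{t'} \subseteq X$, contained in $X$; this is the intuition for why a bounded witness should exist.

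The cleanest route, I expect, is the following. Take $t_0 \in \Gamma_\M$ with $F \subseteq O_{t_0}$. Set $A = O_{t_0}$, a bounded set. Apply Lemma~\ref{unbounded-I}'s \emph{proof strategy} — but since that lemma assumed near-abelianity, I would instead argue directly with Lemma~\ref{new-config}, taking $Y_i = X_0$ for a single bounded externally definable subgroup $X_0 \supseteq A$, and $F$ the given finite subgroup. The hypotheses of Lemma~\ref{new-config} that need checking are: $Y_i^F \subseteq Y_i$ (i.e.\ $X_0^F \subseteq X_0$), which holds once we enlarge $X_0$ using the conjugation-invariance clause of Lemma~\ref{ext-down} applied with $B = F$ — actually more simply, since $F$ is bounded we can invoke Lemma~\ref{ext-up} to find $X_0$ bounded externally definable with $X_0 \supseteq A \cup F$ and $X_0^F \subseteq X_0$ (absorbing $F$-conjugates is harmless as $F$ is finite and $X_0$ can be taken to contain the finitely many conjugates $f^{-1} A f$); $Y_i^{X_i} \subseteq Y_{i-1}$, which is vacuous when all $Y_i$ are equal to $X_0$ provided $X_0$ is normalized by the $X_i$ — here I would take $X_i = X_0$ for all $i$ as well, or use Lemma~\ref{ext-down} to arrange $X_0^{X_i} \subseteq X_0$; and finally $(X_i \cap (I \diamond X_{i-1} \setminus F Y_{i-1}))/X_{i-1}$ infinite for $i > 0$, which is exactly what fails if the conclusion of Lemma~\ref{confusion} is false (combining the negation with Lemma~\ref{annoying} to build the ascending chain $X_0 \subseteq X_1 \subseteq \cdots$ as in the proof of Lemma~\ref{unbounded-I}).

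So concretely: suppose for contradiction that for all $t, t' \in \Gamma_\M$ the set $I \diamond O_t \setminus (F \cdot O_{t'})$ is unbounded. Fix $t_0$ with $F \subseteq O_{t_0}$, and let $A = O_{t_0}$; by Lemma~\ref{ext-up} choose a bounded externally definable subgroup $X_0$ containing $A$ together with all $F$-conjugates of $A$ and the (finitely many, bounded) set $F \cup F^{-1}$, and by Lemma~\ref{ext-down} arrange $X_0^{X_0} \subseteq X_0$ — replacing $X_0$ by a slightly larger bounded externally definable subgroup finitely often to close up. Then $F Y_{n-1} = F X_0 \subseteq X_0 \subseteq O_t$ for any $t$ with $X_0 \subseteq O_t$, so our assumption gives, for each such $t$, that $I \diamond X_{n-1} \setminus F X_0$ contains the unbounded set $I \diamond O_t \setminus (F\cdot O_{t'})$ for suitable $t'$, hence is unbounded. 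As in Lemma~\ref{unbounded-I}, use Lemma~\ref{annoying} repeatedly to build bounded externally definable $X_0 \subseteq X_1 \subseteq \cdots$ with $(X_1 \cap I)/X_0$ infinite and $(X_n \cap (I \diamond X_{n-1} \setminus F X_0))/X_{n-1}$ infinite for $n > 1$. Set $Y_i = X_0$ for all $i$, keep the given $F$; the three bullet conditions of Lemma~\ref{new-config} hold by the preceding setup. Lemma~\ref{new-config} then forces $\M$ to have infinite dp-rank, contradicting that dp-rank equals dimension in $p$CF.

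**Main obstacle.** The delicate point is verifying the hypothesis $Y_i^{X_i} \subseteq Y_{i-1}$ of Lemma~\ref{new-config} when the $X_i$ grow unboundedly but we want $Y_i$ constant: a single bounded subgroup $X_0$ need not be normalized by the large groups $X_i$. This is exactly the place where near-abelianity was used in Lemma~\ref{unbounded-I} (there $X_0 \supseteq [G,G]$ is normal in $G$). Here we have no such luxury, so I expect the correct fix is \emph{not} to hold $Y_i$ constant but to let $Y_i$ shrink: use Lemma~\ref{ext-down} (with its conjugation clause) to build, alongside the ascending chain $X_i$, a descending chain of bounded externally definable open subgroups $Y_i$ with $Y_i^{X_i} \subseteq Y_{i-1}$ and $F Y_i \subseteq$ (something bounded, independent of $i$) — arranging all the $F Y_i$ to lie inside a fixed $O_{t^\ast}$, which is where the hypothesis ``$I \diamond O_{t^\ast} \setminus F\cdot O_{t'}$ unbounded for all $t'$'' gets used to keep the third bullet infinite. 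Making the bookkeeping of the two interleaved chains consistent — ascending $X_i$, descending $Y_i$, with $F Y_i$ uniformly bounded and $(X_i \cap (I \diamond X_{i-1} \setminus F Y_{i-1}))/X_{i-1}$ infinite — is the real content of the argument; once it is set up, Lemma~\ref{new-config} does the rest.
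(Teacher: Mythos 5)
Your final, self-corrected sketch is exactly the paper's proof: assume the conclusion fails, then run two interleaved chains, an ascending chain of bounded externally definable subgroups $X_0 \subseteq X_1 \subseteq \cdots$ produced by Lemma~\ref{annoying}, and a descending chain of \emph{open} externally definable subgroups $Y_0 \supseteq Y_1 \supseteq \cdots$ (with $Y_0 \subseteq X_0$) produced by Lemma~\ref{ext-down}, satisfying $Y_i^F \subseteq Y_i$ and $Y_i^{X_i} \subseteq Y_{i-1}$, and then feed everything into Lemma~\ref{new-config} to contradict finiteness of dp-rank. Your diagnosis that the constant choice $Y_i = X_0$ cannot work without near-abelianity is also precisely why the paper uses the two-chain setup; the middle ``concrete'' paragraph of your proposal should simply be discarded, since arranging $X_0^{X_0} \subseteq X_0$ is vacuous and does not give the needed $X_0^{X_i} \subseteq X_0$, as you yourself observe.

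The bookkeeping you flag as the main obstacle is easier than you fear, and your proposed invariant (keeping all $FY_i$ inside one fixed $O_{t^\ast}$) is not quite the right one: to get the inclusion $I \diamond X_{i-1} \setminus FY_{i-1} \supseteq I \diamond O_t \setminus (F\cdot O_{t'})$ one needs $X_{i-1} \setminus FY_{i-1} \subseteq O_t \setminus (F\cdot O_{t'})$, so it is the \emph{first} radius $t$ that must grow with $X_{i-1}$, while $t'$ shrinks with $Y_{i-1}$. This costs nothing, because the negated conclusion is quantified over all pairs $(t,t')$: at stage $i$ choose $t$ with $X_{i-1} \subseteq O_t$ and $t'$ with $O_{t'} \subseteq Y_{i-1}$ (possible since $Y_{i-1}$ is an open subgroup, hence a neighborhood of $\id_G$), conclude that $I \diamond X_{i-1} \setminus FY_{i-1}$ is unbounded, and apply Lemma~\ref{annoying} to obtain $X_i$ with $(X_i \cap (I \diamond X_{i-1} \setminus FY_{i-1}))/X_{i-1}$ infinite; then apply Lemma~\ref{ext-down} with $S = Y_{i-1}$ and the bounded set $B = X_i \cup F$ to obtain an open externally definable $Y_i \subseteq Y_{i-1}$ with $Y_i^{X_i} \subseteq Y_i$ and $Y_i^F \subseteq Y_i$ (hence $Y_i^F = Y_i$, $F$ being a finite group). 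With these choices the three hypotheses of Lemma~\ref{new-config} hold, and the resulting infinite dp-rank is the desired contradiction, exactly as you say.
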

\begin{proof}
  Suppose not.
  \begin{Claim}
    For any neighborhood $A \ni id_G$ and any bounded set $B \subseteq
    G$, the set $I \diamond B \setminus FA$ is unbounded.
  \end{Claim}
  \begin{claimproof}
    Take $t, t'$ such that
  \begin{gather*}
    O_{t'} \subseteq A \text{ and } B \subseteq O_t \\
    O_t \setminus (F \cdot O_{t'}) \supseteq B \setminus (F \cdot A) \\
    I \diamond O_t \setminus (F \cdot O_{t'}) \subseteq I \diamond B \setminus (F \cdot A). \qedhere
  \end{gather*}
  \end{claimproof}

  Take any bounded open externally definable subgroup $X_0 \subseteq
  G$.  By Lemma~\ref{ext-down} there is an externally definable open
  subgroup $Y_0 \subseteq X_0$ such that $Y_0^F \subseteq Y_0$.
  Recursively build chains
  \begin{gather*}
    X_0 \subseteq X_1 \subseteq \cdots \\
    Y_0 \supseteq Y_1 \supseteq \cdots
  \end{gather*}
  where
  \begin{itemize}
  \item $X_i$ is a bounded externally definable subgroup, chosen large
    enough to ensure that $(X_i \cap (I \diamond X_{i-1} \setminus F Y_{i-1}))/X_{i-1}$ is infinite (Lemma~\ref{annoying}).
  \item $Y_i$ is an open externally definable subgroup with $Y_i^F =
    Y_i$, chosen small enough that $Y_i^{X_i} \subseteq Y_{i-1}$
    (Lemma~\ref{ext-down}).
  \end{itemize}
  This gives a bad gap configuration in $G$, contradicting Lemma~\ref{new-config-1}.
\end{proof}

\subsection{The general case} \label{ssec-unsat}

\begin{Prop} \label{gaps-1} 
  Let $M$ be any model of $p$CF.  Let $G$ be a definable non-compact
  group and $\{O_t : t \in \Gamma_M\}$ be a good neighborhood basis.
  Suppose that $G$ is nearly abelian.  Let $I$ be an unbounded
  definable set.  Then for any $t \in \Gamma_M$, there is $t' \in
  \Gamma_M$ such that $I \diamond O_{t'} \setminus O_t$ is bounded.
\end{Prop}

\begin{proof}
  We may replace $M$ with a monster model, and then apply
  Lemma~\ref{unbounded-I}.
\end{proof}

\begin{Prop} \label{gaps-2} 
  Let $M$ be any model of $p$CF.  Let $G$ be a definable non-compact
  group.  Let $I$ be an unbounded definable set.  Let $F$ be a finite
  subgroup of $G$.  Then for any sufficiently small $s$ and
  sufficiently large $t$, the set $I \diamond O_t \setminus (F O_s)$ is
  bounded.
\end{Prop}

\begin{proof}
  We may replace $M$ with a monster model, and then apply Lemma~\ref{confusion}.
\end{proof}

\section{Stabilizers and $\mu$-stabilizers} \label{stab-review}
In this section we review some notation and facts from
\cite{Y.-Peterzil-and-S.-Starchenko}.
\subsection{Stabilizers}
Let $G$ be a group definable in a structure $M$.

\begin{Notation}
\begin{enumerate}
  \item [(1)] If $\phi(x)$ and $\psi(x)$ are $G$-formulas then $\phi\cdot \psi$ denotes the $G$-formula
\[
(\phi\cdot \psi)(x):= \exists u\exists v(\phi(u)\wedge \psi(v)\wedge x=u\cdot v).
\]
Thus $(\phi \cdot \psi)(M) = \phi(M) \cdot \psi(M)$.
  \item [(2)] More generally, if $q(x)$ and $r(x)$ are partial $G$-types then $q\cdot r$ denotes the $G$-type
\[
(q\cdot r)(x):= \{\phi\cdot \psi(x) \mid q(x)\vdash \phi(x),~ r(x)\vdash \psi(x)\}.
\]
Thus $(q \cdot r)(N) = q(N) \cdot r(N)$ for an $|M|^+$-saturated
elementary extension $N \succ M$.
\item [(3)] If $g \in G(M)$ and $\phi(x)$ is a $G$-formula, then $g \cdot \phi$ denotes the $G$-formula
  \[
  (g \cdot \phi)(x) := \exists u (\phi(u) \wedge x = g \cdot u).
  \]
  Thus $(g \cdot \phi)(M) = g \cdot \phi(M)$.
\item [(4)] If $g \in G(M)$ and $p(x)$ is a partial $G$-type then $g \cdot p$ denotes the $G$-type
  \[
  (g \cdot p)(x) := \{g \cdot \phi(x) \mid p(x) \vdash \phi(x)\}.
  \]
  Thus $(g \cdot p)(N) = g \cdot p(N)$ for an $|M|^+$-saturated $N
  \succ M$.
\end{enumerate}
\end{Notation}
Note that for partial $G$-types $q_1, q_2, q_3$ over $M$, we have
\begin{equation*}
  (q_1 \cdot q_2) \cdot q_3 = q_1 \cdot (q_2 \cdot q_3),
\end{equation*}
as $((q_1 \cdot q_2) \cdot q_3)(N) = q_1(N) \cdot q_2(N) \cdot q_3(N)
= (q_1 \cdot (q_2 \cdot q_3))(N)$ for $|M|^+$-saturated $N \succ M$. 

\begin{Def}
Given a partial type $\Sigma(x)$ over $M$, define $\stab(\Sigma)$ to
be the stabilizer, i.e.,
\[
\stab(\Sigma) := \{g \in G(M) \mid g \Sigma \equiv \Sigma\},
\]
where $\Sigma \equiv \Sigma'$ if $\Sigma(x) \vdash \Sigma'(x)$ and
$\Sigma'(x) \vdash \Sigma(x)$.  Equivalently, $\stab(\Sigma)$ is $\{g
\in G(M) \mid g \Sigma(N) = \Sigma(N)\}$ for $|M|^+$-saturated $N 
\succeq M$.
\end{Def}

\begin{Def}\label{stab-phi}
Given a partial type $\Sigma(x)$ over $M$ and an $\la$-formula $\phi(x,y)$, we define
\[
\stab_\phi(\Sigma)=\bigcap_{b\in M^k}X_{\phi,b},
 \]
 where each $X_{\phi,b}$ is the stabilizer of $\{g\in G(M) \mid \Sigma\vdash (g\phi)(x,b)\}$.
\end{Def}
\begin{Rmk}
  Given $\phi(x;y)$, let $\phi'(x;y,z)$ be the formula $\phi(z \cdot
  x; y)$.  Then $G$ acts on $\phi'$-types by left translation, and
  $\stab_\phi(\Sigma)$ is the stabilizer of the $\phi'$-type generated
  by $\Sigma$.
\end{Rmk}
\begin{Rmk}
  Note that our $\stab_\phi$ is slightly different from the $\Stab_\phi$ considered in \cite{Y.-Peterzil-and-S.-Starchenko}, which is more like the set $X_{\phi,b}$ appearing in Definition~\ref{stab-phi} above.
\end{Rmk}

The following two facts are easy exercises. 
\begin{Fact}
$\stab_\phi(\Sigma)$ is a definable subgroup of $G$ if $\Sigma$ is definable.
\end{Fact}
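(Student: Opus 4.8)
The plan is to unwind the two definitions involved — $\stab_\phi(\Sigma) = \bigcap_{b} X_{\phi,b}$ and $X_{\phi,b} = \stab(Z_b)$, where $Z_b := \{g \in G(M) \mid \Sigma(x) \vdash (g\phi)(x,b)\}$ — and to observe that definability of $\Sigma$ collapses this whole intersection into a single first-order condition on the stabilizing element. The subgroup part is immediate and I would dispose of it first: for any set $A \subseteq G(M)$ the setwise stabilizer $\{h \in G(M) \mid hA = A\}$ is a subgroup, so each $X_{\phi,b} = \stab(Z_b)$ is a subgroup of $G(M)$, and hence so is the intersection $\stab_\phi(\Sigma)$.

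For definability, first I would apply the definability of $\Sigma$ over $M$ to the $\la$-formula $\phi(z^{-1} x; y)$, regarded as a formula in the $G$-variable $x$ with parameter tuple $(z,y)$ (here $z^{-1}$ is the definable group inverse, and $(z,x) \mapsto z^{-1} x$ is definable as well). This produces an $\la_M$-formula $\chi(z,y)$ such that for all $g \in G(M)$ and all $b \in M^k$,
\[
\Sigma(x) \vdash \phi(g^{-1} x; b) \iff M \models \chi(g,b).
\]
Since $(g\phi)(x,b)$ is equivalent to $\phi(g^{-1}x;b)$, this says $Z_b = \{g \in G(M) \mid M \models \chi(g,b)\}$, a uniformly definable family. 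Now $h \in X_{\phi,b} = \stab(Z_b)$ iff $hZ_b = Z_b$ iff $\forall g \in G(M)\,(\chi(g,b) \leftrightarrow \chi(h^{-1}g, b))$, and intersecting over all $b$ gives
\[
\stab_\phi(\Sigma) = \{\, h \in G(M) \mid M \models \forall y\, \forall z\, (\chi(z,y) \leftrightarrow \chi(h^{-1}z, y)) \,\},
\]
where $z$ ranges over $G$ (the relativization being routine). The right-hand side is an $\la_M$-formula in $h$, so $\stab_\phi(\Sigma)$ is definable.

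The hard part, such as it is, is purely bookkeeping: one must check that $z \mapsto z^{-1}$ and $(z,x)\mapsto z^{-1}x$ are definable so that $\phi(z^{-1}x;y)$ is a legitimate instance of the definability schema for $\Sigma$, and one must keep the sides of the translation straight when turning ``$hZ_b = Z_b$'' into the condition $\forall g\,(\chi(g,b)\leftrightarrow\chi(h^{-1}g,b))$. No saturation, compactness, or chain-condition argument will be needed: unlike the $\mu$-stabilizer, $\stab_\phi(\Sigma)$ is genuinely definable rather than merely type-definable, precisely because a single formula $\phi$ has been fixed. This is exactly the content of the Remark after Definition~\ref{stab-phi}: $\stab_\phi(\Sigma)$ is the left-translation stabilizer of the $\phi'$-type generated by $\Sigma$, and that $\phi'$-type is definable because $\Sigma$ is.
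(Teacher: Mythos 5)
Your proof is correct and is essentially the argument the paper intends: the paper states this Fact without proof, importing it from the proof of Proposition~2.13 of Peterzil--Starchenko, but the explicit defining formula it later writes down in the proof of Lemma~\ref{new-lemma}, namely $\forall y\,\forall g\,\bigl((d_\Sigma z)\phi(g\cdot z;y)\leftrightarrow(d_\Sigma z)\phi(x\cdot g\cdot z;y)\bigr)$, is exactly your $\chi$-based formula, up to the harmless discrepancy between the translate convention $(g\phi)(x,b)\equiv\phi(g^{-1}x,b)$ of Definition~\ref{stab-phi} and the convention $\phi(z\cdot x;y)$ of the subsequent Remark. Your bookkeeping (uniform definability of the sets $Z_b$ via the definition schema applied to the translated formula, then a single $\forall y\,\forall z$ to capture the intersection over $b$, with the subgroup property immediate from setwise stabilizers) is the standard and expected route, and no saturation or chain condition is needed, as you say.
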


\begin{Fact}\label{fact-4.7}
For every partial type $\Sigma$ over $M$.
\[
\stab(\Sigma)=\bigcap_{ \phi\in \la}\stab_\phi(\Sigma)
\]
In particular, if $\Sigma$ is definable then $\stab(\Sigma)$ is an
intersection of definable subgroups.
\end{Fact}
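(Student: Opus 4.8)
The plan is to unwind both definitions and reduce everything to the left‑translation action of $G(M)$ on realization sets in the monster model $\M$. Write $\Sigma(\M)$ for the realization set of a partial $G$‑type $\Sigma$ over $M$, so that $\stab(\Sigma) = \{g \in G(M) \mid g\cdot\Sigma(\M) = \Sigma(\M)\}$ and, for any $\la_M$‑formula $\psi(x)$, one has $\Sigma \vdash \psi$ iff $\Sigma(\M) \subseteq \psi(\M)$. First I would record the meaning of $\stab_\phi(\Sigma)$: for $\phi(x;y) \in \la$ and $b \in M^k$ the formula $(g\phi)(x;b)$ defines $g \cdot \phi(\M;b)$, so $S_{\phi,b} := \{g \in G(M) \mid \Sigma \vdash (g\phi)(x;b)\}$ is exactly $\{g \in G(M) \mid g^{-1}\Sigma(\M) \subseteq \phi(\M;b)\}$, and $X_{\phi,b}$ is its stabilizer under left translation, with $\stab_\phi(\Sigma) = \bigcap_b X_{\phi,b}$. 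Note that $\stab(\Sigma)$ and each $X_{\phi,b}$ are genuine subgroups of $G(M)$, being stabilizers of a subset under a group action; this is what lets me pass freely between $g$, $g^{-1}$, and powers $g^n$.

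For the inclusion $\stab(\Sigma) \subseteq \bigcap_\phi \stab_\phi(\Sigma)$: if $g \in \stab(\Sigma)$ then $g^{-1}\Sigma(\M) = \Sigma(\M)$ as well, so for any $\phi, b$ and any $h \in G(M)$, $gh \in S_{\phi,b}$ iff $(gh)^{-1}\Sigma(\M) \subseteq \phi(\M;b)$, iff $h^{-1}\Sigma(\M) \subseteq \phi(\M;b)$, iff $h \in S_{\phi,b}$; hence $gS_{\phi,b} = S_{\phi,b}$ and $g \in X_{\phi,b}$. For the reverse inclusion, suppose $g \in \bigcap_\phi \stab_\phi(\Sigma)$ and fix $\phi(x;y) \in \la$ and $b \in M^k$. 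Since $g$ (hence $g^{-1}$) lies in $X_{\phi,b}$, the cyclic group it generates preserves $S_{\phi,b}$, so in particular $\id \in S_{\phi,b}$ iff $g^{-1} \in S_{\phi,b}$. Reading off both sides via the description of $S_{\phi,b}$: $\id \in S_{\phi,b}$ iff $\Sigma(\M) \subseteq \phi(\M;b)$ iff $\Sigma \vdash \phi(x;b)$, while $g^{-1} \in S_{\phi,b}$ iff $g\Sigma(\M) \subseteq \phi(\M;b)$ iff $g\Sigma \vdash \phi(x;b)$. Therefore $\Sigma \vdash \phi(x;b)$ iff $g\Sigma \vdash \phi(x;b)$; and since every $\la_M$‑formula has the form $\phi(x;b)$, the partial types $\Sigma$ and $g\Sigma$ have the same consequences, so $g\Sigma \equiv \Sigma$ and $g \in \stab(\Sigma)$. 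This gives the displayed equality, and the ``in particular'' clause follows immediately: when $\Sigma$ is definable, the preceding Fact makes each $\stab_\phi(\Sigma)$ a definable subgroup, so $\stab(\Sigma)$ is the displayed intersection of definable subgroups.

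I do not expect a genuine obstacle; the entire argument is bookkeeping. The one place that requires care is the handling of the left/right conventions and the inverse hidden in the definition of $(g\phi)(x;b)$ and hence of $S_{\phi,b}$, so that ``$g$ fixes $S_{\phi,b}$ setwise'' is correctly translated into a comparison of $\Sigma$ with $g\Sigma$ rather than with $g^{-1}\Sigma$. Conceptually the cleanest way to organise this is via the Remark following Definition~\ref{stab-phi}: $\stab_\phi(\Sigma)$ is the left‑translation stabilizer of the $\phi'$‑type generated by $\Sigma$, where $\phi'(x;y,z) = \phi(z\cdot x;y)$, and the whole statement then says that $g$ stabilizes $\Sigma$ itself precisely when it stabilizes each of these ``slices'' $\phi'$ of the type.
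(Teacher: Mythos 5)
Your proof is correct, and in fact the paper offers no proof of this Fact at all — it is simply quoted from the proof of Proposition~2.13 in Peterzil--Starchenko — so there is nothing to diverge from: your unwinding (left-translation reading of $X_{\phi,b}$, the identity/inverse trick giving $\Sigma \vdash \phi(x;b) \iff g\Sigma \vdash \phi(x;b)$ for all $\phi \in \la$ and $b \in M^k$, and the observation that every $\la_M$-formula is an instance $\phi(x;b)$) is exactly the intended bookkeeping argument. Your explicit attention to the inverse hidden in $(g\phi)(x;b)$ is well placed, since with the opposite (right-translation) convention the forward inclusion would genuinely fail for non-abelian $G$.
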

Recall the notation $\Sigma^N$ for the canonical extension of a
definable type $\Sigma$ to an elementary extension $N \succ M$, and
the notation $(d_\Sigma x)\phi(x;y)$ for the $\phi$-definition of
$\Sigma$.
\begin{Lemma}\label{new-lemma}
  If $\Sigma$ is definable and $N \succ M$, then $\stab_\phi(\Sigma^N)
  = \stab_\phi(\Sigma)(N)$, and so
  \begin{equation*}
    \stab(\Sigma^N) = \bigcap_{\phi \in \la} \stab_\phi(\Sigma)(N).
  \end{equation*}
\end{Lemma}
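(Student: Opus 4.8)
The plan is to reduce the claim about $\stab_\phi$ to the corresponding claim about the individual stabilizers $X_{\phi,b}$ appearing in Definition~\ref{stab-phi}, and there to invoke the definability of $\Sigma$ to pass back and forth between $M$ and $N$ via the $\phi$-definitions $(d_\Sigma x)\psi(x;y)$. First I would unwind the definitions: for $b \in N^k$, the set $\{g \in G(N) \mid \Sigma^N \vdash (g\phi)(x,b)\}$ is a definable subset of $G(N)$, and the key point is that it is \emph{defined by an $\la$-formula coming from a definition of $\Sigma$}. Indeed, $\Sigma^N \vdash \phi(gx;b)$ holds if and only if $N \models (d_\Sigma x)\phi(x;\tilde b)$ where $\tilde b$ packages $g$ and $b$; writing $\phi'(x;y,z) := \phi(z\cdot x;y)$ as in the Remark after Definition~\ref{stab-phi}, this says precisely that the set in question is $\{g \mid N \models (d_\Sigma x)\phi'(x;b,g)\}$, which is $Z(N)$ where $Z$ is the $M$-definable set $\{g \mid M \models (d_\Sigma x)\phi'(x;b,g)\}$ --- for $b$ ranging over $M$ --- and its obvious extension for $b \in N$.

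The second step is then to observe that $\stab_\phi(\Sigma)$, being an intersection over $b \in M^k$ of stabilizers of uniformly $M$-definable sets $Z_b \subseteq G(M)$, is itself $M$-definable: by the Fact quoted just before the lemma, $\stab_\phi(\Sigma)$ is a definable subgroup of $G$, say $\stab_\phi(\Sigma) = \theta(M)$ for some $\la_M$-formula $\theta$. One checks that the formula $\theta(x)$ can be taken to assert ``for all $y$, $x$ stabilizes $Z_y$,'' i.e. $\forall y\, \forall g\, ((d_\Sigma x)\phi'(x;y,g) \leftrightarrow (d_\Sigma x)\phi'(x;y,xg))$ or the appropriate left-translation version; this is a single $\la_M$-formula because the $\phi$-definition $(d_\Sigma x)\phi'$ is a single formula. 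Now I would run the same computation over $N$: $\stab_\phi(\Sigma^N) = \bigcap_{b \in N^k}(\text{stabilizer of } Z_b(N))$, and since $Z_b(N)$ is defined by exactly the same formula $(d_\Sigma x)\phi'(x;y,g)$ evaluated in $N$ (using that $\Sigma^N$ is the extension by definitions), the displayed formula $\theta$ also defines $\stab_\phi(\Sigma^N)$ inside $G(N)$. Hence $\stab_\phi(\Sigma^N) = \theta(N) = \theta(M) \cdot \text{(extension)} = \stab_\phi(\Sigma)(N)$, using the convention from the introduction that $\theta(N)$ denotes the $N$-points of the $M$-definable set $\theta(M)$.

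Finally, the ``and so'' clause is immediate: applying the Fact that $\stab(\Sigma') = \bigcap_{\phi \in \la}\stab_\phi(\Sigma')$ to $\Sigma' = \Sigma^N$ and substituting the identity just proved gives $\stab(\Sigma^N) = \bigcap_{\phi \in \la}\stab_\phi(\Sigma^N) = \bigcap_{\phi \in \la}\stab_\phi(\Sigma)(N)$. The main obstacle I anticipate is purely bookkeeping: keeping straight which variables play the role of the translating element $g$ versus the original parameter $b$, and being careful that the passage from the family $\{X_{\phi,b} : b \in M^k\}$ to a \emph{single} defining formula $\theta$ really only uses the $\phi$-definition of $\Sigma$ (so that the same $\theta$ works verbatim over $N$). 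There is no genuine model-theoretic difficulty here --- it is the standard fact that stabilizers of $\phi$-types commute with extension by definitions --- but it must be written carefully to make the substitution $M \rightsquigarrow N$ transparent.
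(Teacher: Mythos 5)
Your proposal is correct and follows essentially the same route as the paper: both write down the single $\la_M$-formula $\forall y\,\forall g\,\bigl((d_\Sigma z)\,\phi(g\cdot z;y)\leftrightarrow (d_\Sigma z)\,\phi(x\cdot g\cdot z;y)\bigr)$ defining $\stab_\phi(\Sigma)$ and observe that, since $\Sigma^N$ has the same definition schema as $\Sigma$, the very same formula defines $\stab_\phi(\Sigma^N)$ in $N$, after which the displayed equality follows from the fact that $\stab=\bigcap_\phi\stab_\phi$. The only difference is expository (your $\phi'$ bookkeeping and a harmless variable clash between the bound and free occurrences of $x$, which you already flag); no substantive divergence.
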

\begin{proof}
  Indeed, $\stab_\phi(\Sigma)(M)$ is defined by the formula
  \begin{equation*}
    \forall y \forall g : ((d_\Sigma z) \phi(g \cdot z; y))
    \leftrightarrow ((d_\Sigma z) \phi(x \cdot g \cdot z; y))
  \end{equation*}
  and $\stab_\phi(\Sigma^N)$ is defined by the same formula, because
  $\Sigma^N$ and $\Sigma$ have the same definition schema.
\end{proof}

\subsection{$\mu$-types and $\mu$-stabilizers}

In this section we assume that $G$ is a Hausdorff topological group
definable in $M$ with a uniformly definable basis $\{O_t \mid t\in
T\}$ of open neighborhoods of the identity. For each $N\succ M$, the
group $G(N)$ is again a topological group and the definable family
$\{O_t(N) \mid t\in T(N)\}$ again forms a basis for the open
neighborhoods of $\id_G$.

\begin{Def}
The \emph{infinitesimal type} of $G$, denoted $\mu(x)$, is the partial
type consisting of all formulas $x \in U$ with $U$ an $M$-definable neighborhood of $\id_G$.
\end{Def}
Thus, if $N \succeq M$, then $\mu(N)$ is the set of elements of $G(N)$
which are infinitesimally close to $\id_G$:
\begin{align*}
  \mu(N) &= \bigcap \{U(N) \mid U \text{ is an $M$-definable neighborhood of } \id_G\} \\
  &= \bigcap_{t \in T(M)} O_t(N).
\end{align*}

\begin{Fact}[{\cite[Corollary~2.5 and Claim~2.15]{Y.-Peterzil-and-S.-Starchenko}}]\label{some-fact}~
\begin{enumerate}
\item If $N \succ M$, then $\mu(N)$ is a subgroup of $G(N)$ normalized
  by $G(M)$.
  \item For any definable $q \in S_G(M)$, the partial type $\mu \cdot q$ is definable.
\end{enumerate}
\end{Fact}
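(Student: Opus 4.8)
I would prove the two parts separately. Both rest on the continuity of the group operations together with elementarity; the second additionally uses a compactness argument to collapse the partial type $\mu$ into a single basic neighborhood. For Part~(1), recall from the display preceding the statement that $\mu(N) = \bigcap_{t \in T(M)} O_t(N)$, and note that the family $\{O_t : t \in T(M)\}$ is downward directed: for $t_1, t_2 \in T(M)$ the set $O_{t_1} \cap O_{t_2}$ is a neighborhood of $\id_G$, hence contains some basic $O_{t_3}$. Clearly $\id_G \in \mu(N)$. For closure under multiplication, fix $t \in T(M)$; continuity of the group operation at $(\id_G,\id_G)$ gives $t_1, t_2 \in T(M)$ with $O_{t_1}\cdot O_{t_2} \subseteq O_t$, and directedness gives $t' \in T(M)$ with $O_{t'}\cdot O_{t'} \subseteq O_t$. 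This is a statement over $M$, so it holds in $N$; since $\mu(N) \subseteq O_{t'}(N)$ we get $\mu(N)\cdot\mu(N) \subseteq O_t(N)$, and intersecting over $t$ gives $\mu(N)\cdot\mu(N) \subseteq \mu(N)$. Closure under inversion is the same argument applied to $x \mapsto x^{-1}$. For normalization, fix $g \in G(M)$ and let $c_g$ be the $M$-definable homeomorphism $x \mapsto gxg^{-1}$ of $G$, which fixes $\id_G$; for each $t \in T(M)$ there is $t' \in T(M)$ with $c_g(O_{t'}) \subseteq O_t$, and transferring this to $N$ gives $c_g(\mu(N)) \subseteq O_t(N)$ for all $t$, so $g\,\mu(N)\,g^{-1} \subseteq \mu(N)$.

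For Part~(2), given a formula $\chi(x;y)$ we must find an $\la_M$-formula $\theta(y)$ with $(\mu\cdot q)(x) \vdash \chi(x;b) \iff M \models \theta(b)$ for all $b \in M$. Unwinding the definition of $\mu\cdot q$ (or using $(\mu\cdot q)(N) = \mu(N)\cdot q(N)$ in a sufficiently saturated $N \succ M$), one sees that $(\mu\cdot q)(x) \vdash \chi(x;b)$ holds exactly when the partial type $\mu(u) \cup q(v) \cup \{\neg\chi(u\cdot v;b)\}$ is inconsistent. By compactness, together with the downward-directedness of $\mu$, this is equivalent to the existence of $t \in T(M)$ such that $q(v) \vdash \forall u\,(u \in O_t \to \chi(u\cdot v;b))$. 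Since $q$ is definable, the fixed formula $\psi(v;t,b) := \forall u\,(u \in O_t \to \chi(u\cdot v;b))$ admits an $\la_M$-formula $(d_q v)\psi(v;t,b)$ equivalent to ``$q(v) \vdash \psi(v;t,b)$'' (here the parameter $t$ ranges over the interpretable index set $T$, which causes no difficulty). Hence $(\mu\cdot q)(x) \vdash \chi(x;b)$ iff $M \models \exists t\in T\,(d_q v)\psi(v;t,b)$, so we may take
\[
  \theta(y) \;:=\; \exists t\in T \; (d_q v)\, \forall u\, \bigl(u \in O_t \to \chi(u\cdot v; y)\bigr),
\]
which is an $\la_M$-formula. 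Thus $\mu\cdot q$ is definable.

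The only genuinely delicate point is the compactness step in Part~(2): one must first rewrite the infinitely many conditions coming from $\mu(u)$ as the single condition $u \in O_t$ for an existentially quantified $t$ --- which requires that the $O_t$ be downward directed --- and then observe that the resulting quantifier $\exists t$ runs over the interpretable index set $T$ rather than over $G$, so that $\theta$ is a genuine $\la_M$-formula and not merely an infinite disjunction. Everything else is the routine transfer between $M$ and $N$ of first-order statements expressing continuity of multiplication, inversion, and conjugation.
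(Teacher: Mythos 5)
Your proof is correct; the paper does not actually prove this Fact but imports it from Peterzil--Starchenko (Corollary~2.5 and Claim~2.15), and your argument --- transferring the first-order continuity statements $O_{t'}\cdot O_{t'}\subseteq O_t$, $O_{t'}^{-1}\subseteq O_t$, $gO_{t'}g^{-1}\subseteq O_t$ from $M$ to $N$ for part~(1), and for part~(2) using compactness plus directedness of $\{O_t\}$ to reduce $(\mu\cdot q)\vdash\chi(x;b)$ to a single condition $q(v)\vdash\forall u\,(u\in O_t\to\chi(u\cdot v;b))$ and then applying the $d_q$-definition with an existential quantifier over the interpretable index set --- is essentially the standard proof given there. (The only cosmetic point is that normalization requires $g\mu(N)g^{-1}=\mu(N)$, which follows at once since your inclusion holds for $g^{-1}$ as well.)
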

Partial types of the form $\mu \cdot q$ for $q \in S_G(M)$ are called
\emph{$\mu$-types}.  The \emph{$\mu$-stabilizer} of $q \in S_G(M)$ is
the stabilizer of the associated $\mu$-type:
\begin{equation*}
  \stab^\mu(q) := \stab(\mu \cdot q).
\end{equation*}
Note that if $\mu$ is the infinitesimal type of $G = G(M)$, and $N \succeq M$, then the canonical extension $\mu^N$ is the infinitesimal type of $G(N)$. 

\begin{Fact}[{\cite[Remark~2.16]{Y.-Peterzil-and-S.-Starchenko}}] \label{ext-cdot}
  If $p$ is a definable type over $M$ and $N \succ M$, then the
  product of the canonical extensions is equal to the canonical
  extension of the product:
  \begin{equation*}
    \mu^N \cdot p^N = (\mu \cdot p)^N.
  \end{equation*}
\end{Fact}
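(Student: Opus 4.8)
The plan is to make the definition schema of the partial type $\mu\cdot p$ completely explicit, notice that this schema does not mention the base model, and then read off the desired equality from the definition of the canonical extension.

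First I would fix an $\la$-formula $\theta(x;y)$ and determine, for $b\in M$, exactly when $\mu\cdot p\vdash\theta(x;b)$. In a sufficiently saturated $\M\succ M$ we have $(\mu\cdot p)(\M)=\mu(\M)\cdot p(\M)$, so $\mu\cdot p\vdash\theta(x;b)$ holds iff $\epsilon\cdot d\models\theta(x;b)$ for all $\epsilon\models\mu$ and all $d\models p$ in $\M$. Fixing $d$, the set $\{w:\M\models\theta(w\cdot d;b)\}$ is definable, and it contains all of $\mu(\M)$ iff it contains $O_t(\M)$ for some $t\in T$; this is the one place where saturation of $\M$ and the fact that $\{O_t\}$ is a (downward directed) neighborhood basis are used. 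Setting
\[
\chi(x;y):=\exists t\in T\;\forall z\;\bigl(z\in O_t\rightarrow\theta(z\cdot x;y)\bigr),
\]
we get $\mu\cdot p\vdash\theta(x;b)\iff p\vdash\chi(x;b)$; equivalently, over $M$ the $\theta$-definition $(d_{\mu\cdot p}x)\,\theta(x;y)$ of $\mu\cdot p$ coincides with $(d_p x)\,\chi(x;y)$. The crucial point is that $\chi$ refers only to the quantifier $\exists t\in T$, not to any parameters beyond those of $\theta$.

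Next I would push this through the canonical extension. For $N\succ M$ and $b\in N$ the above equivalence of schema formulas persists, so
\[
\theta(x;b)\in(\mu\cdot p)^N\iff N\models(d_p x)\chi(x;b)\iff\chi(x;b)\in p^N.
\]
Then I would run the identical computation with $N$ as base: using that $\mu^N$ is precisely the infinitesimal type of $G(N)$, so that ``infinitesimal over $N$'' is still expressed by $z\in O_t$ for some $t\in T$, I get, for $b\in N$,
\[
\mu^N\cdot p^N\vdash\theta(x;b)\iff p^N\vdash\chi(x;b)\iff\chi(x;b)\in p^N,
\]
with the \emph{same} formula $\chi$. Comparing the two displays shows $\theta(x;b)\in(\mu\cdot p)^N\iff\mu^N\cdot p^N\vdash\theta(x;b)$ for every $\theta$ and every $b\in N$, which is exactly $\mu^N\cdot p^N=(\mu\cdot p)^N$.

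The step I expect to require the most care is the passage from ``$\mu(\M)$ is contained in a definable set $S$'' to ``some $O_t(\M)$ is contained in $S$'': this needs the basis to be directed and $\M$ to be saturated relative to the parameters of $S$, and, more importantly, it is what makes the witness $\chi$ literally the same formula over $M$ and over $N$ --- which in turn depends on $\mu^N$ being the infinitesimal type of $G(N)$ rather than something larger. Once that is in hand, the rest is bookkeeping with the definitions of the product of partial types and of the canonical extension $\Sigma^N$.
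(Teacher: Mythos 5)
Your overall strategy (make the definition schema of $\mu\cdot p$ explicit and observe that it does not mention the base model) is the natural one --- the paper itself gives no argument here, simply citing \cite[Remark~2.16]{Y.-Peterzil-and-S.-Starchenko} --- but your key step is false as stated. You claim $\mu\cdot p\vdash\theta(x;b)\iff p\vdash\chi(x;b)$ with $\chi(x;y)=\exists t\,\forall z\,(z\in O_t\to\theta(z\cdot x;y))$. The saturation/directedness argument only shows: for a fixed $d\models p$, the set $S_d=\{w:\theta(w\cdot d;b)\}$ contains $\mu(\M)$ iff it contains $O_t(\M)$ for some $t\in T(M)$, i.e.\ for a \emph{standard} index. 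In $\chi$, however, $\exists t$ is evaluated in the model where $d$ lives, so the witness $t$ may be nonstandard and may depend on $d$; containing such an $O_t(\M)$ is strictly weaker than containing $\mu(\M)$, since $O_t(\M)\subsetneq\mu(\M)$ when $t$ lies above $\Gamma_M$. Concretely, take $G=(\M,+)$ with $O_t=\{x:v(x)\ge t\}$, let $p$ be the definable type determined by $\{v(x)>\gamma \mid \gamma\in\Gamma_M\}\cup\{x\in P_n \mid n\ge 1\}$, and let $\theta(x)$ be $x\in P_2\wedge v(x)>0$ (say $p$ odd). Every $d\models p$ satisfies $\chi$, witnessed by the nonstandard radius $t=2v(d)+2$ (by Hensel, the ball of that radius around $d$ consists of squares of positive value), so $p\vdash\chi$; but $\mu\cdot p\not\vdash\theta$, since for a non-square $\epsilon$ with $\Gamma_M<v(\epsilon)<v(d)-1$ the element $\epsilon+d$ realizes $\mu\cdot p$ and is a non-square. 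So $(d_{\mu\cdot p}x)\theta$ is not $(d_p x)\chi$, and the same defect invalidates your parallel computation of $\mu^N\cdot p^N$; the two sides of your chain fail in the same way, which is why the final equality happens to be true, but the argument as written does not establish it.

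The repair is to pull the existential quantifier over $t$ \emph{outside} the $d_p$-quantifier: for $t$ and $b$ in the base model, the statement ``$O_t\cdot d\subseteq\theta(\M;b)$'' is expressed by an $\la_{M}$-formula about $d$, hence has constant truth value on realizations of $p$, so the quantifier exchange is legitimate and the correct $\theta$-definition of $\mu\cdot p$ is $\exists t\,(d_p x)\,\forall z\,(z\in O_t\to\theta(z\cdot x;y))$. This schema is again built uniformly from the definition schema of $p$ (and one uses, as you note, that $\mu^N$ is exactly the infinitesimal type of $G(N)$, so the same basis $\{O_t\}$ works over $N$); with it, your bookkeeping goes through and yields $\mu^N\cdot p^N=(\mu\cdot p)^N$.
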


\begin{Rmk} \label{useful-rmk}
  Let $N$ be an $|M|^+$-saturated extension of $M$, and $\mu^N$ and
  $p^N$ be the canonical extensions of $\mu$ and $p$.  Then
  \begin{equation*}
    \stab(\mupn) = \stab((\mup)^N) = \bigcap_{\phi \in \la} \stab_\phi(\mup)(N),
  \end{equation*}
  by Lemma~\ref{new-lemma} and Fact~\ref{ext-cdot}.
\end{Rmk}

By Fact~\ref{some-fact}, $\mu(N)\cdot G(M)$ is a subgroup of $G(N)$ as
$\mu(N)\sq G(N)$ is normalized by $G(M)$.  This subgroup is the ${\cal
  O}_{G(M)}(N)$ of Definition~\ref{o-st-def}. Because $\mu(N)\cap
G(M)=\{\id_G\}$, the group $\mu(N) \cdot G(M)$ is a semidirect
product of $\mu(N)$ and $G(M)$, and there is a natural homomorphism
\begin{equation*}
  \mathcal{O}_{G(M)}(N) = \mu(N) \cdot G(M) \to G(M).
\end{equation*}
This map is exactly the ``standard part'' map $\st_M^N$ of
Definition~\ref{o-st-def}.  For $Y\sq G(N)$, we will write
$\st_M^N(Y)$ as a shorthand for $\st_M^N(Y \cap
\mathcal{O}_{G(M)}(N))$, following \cite{Y.-Peterzil-and-S.-Starchenko}.

\begin{Lemma} \label{useful-lemma}
  Let $p\in S_G(M)$ be a definable type and let $\beta \in G(\M)$
  realize $p^N$.  Then
\begin{enumerate}
    \item \label{clause-i} $\stab(\mupn)=\stn(p^N(\M)\beta^{-1})$;
    \item \label{clause-ii} $\stab(\mupn)=\bigcap_{\psi\in p^N}\stn(\psi(\M)\beta^{-1})$;
\end{enumerate}
\end{Lemma}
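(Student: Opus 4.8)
The plan is to unwind the definitions of $\stab$ and the standard-part map, using the semidirect-product description $\mathcal{O}_{G(M)}(N) = \mu(N) \cdot G(M)$ established just before the statement. First I would work in an $|M|^+$-saturated extension $N \succ M$ (so that $(\mu \cdot p)(N) = \mu(N) \cdot p(N)$ and $p^N$ is realized), and pass to the monster $\M \succ N$ containing $\beta \models p^N$. The key observation is that $\mu^N \cdot p^N(\M)$, as a subset of $G(\M)$, equals $\mu^N(\M) \cdot \beta'$ for \emph{any} realization $\beta'$ of $p^N$ — in particular $\mu^N \cdot p^N(\M) = \mu^N(\M) \cdot \beta$ — because $p^N(\M)$ is a single coset... no, more carefully: $p^N(\M)$ need not be a coset, but the product $\mu^N(\M) \cdot p^N(\M)$ \emph{is} a union of $\mu^N(\M)$-cosets, and since $\beta \in p^N(\M)$ we get $\mu^N(\M)\cdot p^N(\M) = \mu^N(\M)\cdot p^N(\M)\beta^{-1}\cdot\beta$. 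So I would instead identify the $\mu$-type differently; see below.

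Here is the cleaner route. By Remark~\ref{useful-rmk} (really, by the definition of $\stab$), $\stab(\mu^N \cdot p^N) = \{g \in G(M) : g \cdot \mu^N(\M) \cdot p^N(\M) = \mu^N(\M) \cdot p^N(\M)\}$, where I use that $g$ normalizes $\mu^N(\M)$ (Fact~\ref{some-fact}) to move $g$ past $\mu^N$. Now $\mu^N(\M)\cdot p^N(\M) = \mu^N(\M)\cdot p^N(\M)\beta^{-1}\beta$, and right-multiplication by $\beta^{-1}$ is a bijection of $G(\M)$ commuting with left translation by $g$, so $\stab(\mu^N\cdot p^N)$ is exactly the set of $g \in G(M)$ fixing the set $\mu^N(\M)\cdot p^N(\M)\beta^{-1} \subseteq G(\M)$ setwise. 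It remains to recognize this set: since $\beta \models p^N$, we have $\beta^{-1} \in p^N(\M)^{-1}$, and $p^N(\M)\beta^{-1}$ is the set of $ab^{-1}$ with $a,b \models p^N$; because $\beta$ itself realizes $p^N$, the element $\id_G = \beta\beta^{-1}$ lies in $p^N(\M)\beta^{-1}$, so $\mu^N(\M)\cdot p^N(\M)\beta^{-1} \supseteq \mu^N(\M)$, and in fact every element of $p^N(\M)\beta^{-1}$ is infinitesimally close to some element of $G(M)$ — this is where I invoke that $p$ is \emph{definable}, hence $p^N(\M)$ consists of a single $\mathrm{st}_N^\M$-fiber image, no: rather, any two realizations of the same type over $N$ differ by... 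Actually the needed fact is: $a,b \models p^N$ implies $ab^{-1} \in \mathcal{O}_{G(N)}(\M)$, which I would justify via Fact~\ref{infinitesimal type} / Lemma~\ref{1-d-types}-style reasoning if $p$ is $1$-dimensional, but in general it follows because $p^N$ is finitely satisfiable in $N$... I would instead simply quote or reprove that for a definable type $p$, $p^N(\M) \subseteq \mathcal{O}_{G(N)}(\M) \cdot \beta$, equivalently $p^N(\M)\beta^{-1} \subseteq \mathcal{O}_{G(N)}(\M)$. Granting this, $\mathrm{st}_N^\M$ is defined on $p^N(\M)\beta^{-1}$, and $\mathrm{st}_N^\M(\mu^N(\M)\cdot p^N(\M)\beta^{-1}) = \mathrm{st}_N^\M(p^N(\M)\beta^{-1})$ since $\mu^N$ is the kernel of $\mathrm{st}_N^\M$. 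Finally, $g \in G(M) \subseteq G(N)$ fixes the saturated set $\mu^N(\M)\cdot p^N(\M)\beta^{-1}$ setwise iff it fixes its standard part $\mathrm{st}_N^\M(p^N(\M)\beta^{-1}) \subseteq G(N)$ setwise — the nontrivial ``only if'' uses that left translation by $g$ commutes with $\mathrm{st}_N^\M$, and the ``if'' uses that $\mu^N(\M)\cdot p^N(\M)\beta^{-1}$ is exactly the preimage under $\mathrm{st}_N^\M$ of its image (because it is $\mu^N(\M)$-invariant on the left and $\mu^N$ is the kernel). This gives clause~(\ref{clause-i}).

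For clause~(\ref{clause-ii}), I would write $p^N(\M) = \bigcap_{\psi \in p^N} \psi(\M)$, so $p^N(\M)\beta^{-1} = \bigcap_{\psi \in p^N} \psi(\M)\beta^{-1}$, and then $\mathrm{st}_N^\M\left(\bigcap_\psi \psi(\M)\beta^{-1}\right) = \bigcap_\psi \mathrm{st}_N^\M(\psi(\M)\beta^{-1})$ — the inclusion $\subseteq$ is trivial, and $\supseteq$ follows by an $|M|^+$-saturation / compactness argument exactly as in the proof that $\mathrm{st}$ commutes with filtered intersections of closed definable sets (the family $\{\psi(\M)\beta^{-1}\}$ is downward directed). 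Combined with clause~(\ref{clause-i}) and the fact that $g$ stabilizing an intersection is implied by $g$ stabilizing each member, plus the reverse (a standard stabilizer-of-intersection argument using that the $\psi$'s are a basis for $p^N$), this yields $\stab(\mu^N\cdot p^N) = \bigcap_{\psi \in p^N} \mathrm{st}_N^\M(\psi(\M)\beta^{-1})$.

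The main obstacle I anticipate is the claim that $p^N(\M)\beta^{-1} \subseteq \mathcal{O}_{G(N)}(\M)$, i.e., that any realization of $p^N$ is infinitesimally close over $N$ to \emph{some} element of $G(N)$ — equivalently that $\mathrm{st}_N^\M$ is even defined on the relevant sets. For a general definable type this is not obvious and may in fact require $p$ to concentrate on a definably compact set, or may need to be interpreted with the convention $\mathrm{st}_N^\M(Y) := \mathrm{st}_N^\M(Y \cap \mathcal{O}_{G(N)}(\M))$ flagged in the paragraph before the lemma — in which case the content of the lemma is that intersecting with $\mathcal{O}_{G(N)}(\M)$ does not lose the stabilizer. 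I would resolve this by carrying the convention through: define everything via $Y \cap \mathcal{O}_{G(N)}(\M)$, and check that $g \in G(M)$ normalizes $\mathcal{O}_{G(N)}(\M)$ (clear, since it normalizes $\mu^N(\M)$ and fixes $G(N)$ setwise by being in it), so all the setwise-stabilizer manipulations above go through verbatim after replacing each set $Y$ by $Y \cap \mathcal{O}_{G(N)}(\M)$.
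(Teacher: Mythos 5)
The genuine gap is in clause~(\ref{clause-i}). Even granting all your intermediate claims, your argument concludes that $\stab(\mupn)$ equals the \emph{setwise stabilizer} of $\stn(p^N(\M)\beta^{-1})$ under left translation, whereas the lemma asserts that $\stab(\mupn)$ equals the set $\stn(p^N(\M)\beta^{-1})$ itself. Since $\id_G\in\stn(p^N(\M)\beta^{-1})$, your equivalence would yield the inclusion $\stab(\mupn)\subseteq\stn(p^N(\M)\beta^{-1})$, but the reverse inclusion --- that for every $b\models p^N$ with $b\beta^{-1}\in\mathcal{O}_{G(N)}(\M)$ the element $g=\stn(b\beta^{-1})$ actually stabilizes $\mupn$ --- is nowhere addressed, and that is exactly the nontrivial content; deducing it from your statement would require already knowing that $\stn(p^N(\M)\beta^{-1})$ is a subgroup (equivalently, its own setwise stabilizer), which is part of what is being proved. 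The paper does not reprove this: clause~(\ref{clause-i}) is quoted as Claim~2.22 of Peterzil--Starchenko (the proof there writes $b=\epsilon g\beta$ with $\epsilon\in\mu^N(\M)$, $g\in G(N)$, and uses that whether $g\cdot b'\models\mupn$ depends only on $\tp(b'/N)=p^N$, so the single witness $\beta$ suffices, together with the same argument for $g^{-1}$ and the fact that $G(N)$ normalizes $\mu^N(\M)$). A second problem: your auxiliary claim $p^N(\M)\beta^{-1}\subseteq\mathcal{O}_{G(N)}(\M)$ is genuinely false whenever $p$ is unbounded --- e.g.\ for $G$ the additive group and $p$ the definable type at infinity, two realizations of $p^N$ can differ by an element of valuation below $\Gamma_N$ --- and this is precisely the situation the paper is concerned with. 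You flag this and propose carrying the convention $\stn(Y):=\stn(Y\cap\mathcal{O}_{G(N)}(\M))$ through, but then the ``if'' half of your final equivalence (which rests on $\mu^N(\M)\cdot p^N(\M)\beta^{-1}$ being the full preimage of its standard-part image) breaks down, so only the easy inclusion survives. Also a small slip: $\stab(\mupn)$ is a subgroup of $G(N)$, not of $G(M)$.

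Your treatment of clause~(\ref{clause-ii}) is essentially correct and coincides with the paper's argument: the family $\{\psi(\M)\beta^{-1}\}_{\psi\in p^N}$ is downward directed, and saturation of $\M$ (equivalently compactness) shows that $g\in\bigcap_{\psi}\stn(\psi(\M)\beta^{-1})$ forces a single $h\in p^N(\M)$ with $h\beta^{-1}g^{-1}\models\mu^N$, giving $\bigcap_{\psi}\stn(\psi(\M)\beta^{-1})\subseteq\stn(p^N(\M)\beta^{-1})$; the reverse inclusion is trivial. The additional ``stabilizer-of-intersection'' step you append is unnecessary: once clause~(\ref{clause-i}) is available, this set-level identity immediately gives clause~(\ref{clause-ii}). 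So the proposal stands or falls with the missing hard direction of clause~(\ref{clause-i}), which needs either the citation or an argument along the lines sketched above using definability/heirship only through the fact that $g\in G(N)$ and $\mupn$ is a partial type over $N$.
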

\begin{proof}
Clause~(\ref{clause-i}) is by Claim 2.22 in \cite{Y.-Peterzil-and-S.-Starchenko}.

For (\ref{clause-ii}), we must show
\begin{equation*}
  \stn(p^N(\M)\beta^{-1}) = \bigcap_{\psi\in p^N}\stn(\psi(\M)\beta^{-1}).
\end{equation*}
The $\subseteq$ direction is clear.  For $\supseteq$, suppose that $g
\in \bigcap_{\psi\in p^N}\stn(\psi(\M)\beta^{-1})$.  Then for any
$\psi \in p^N$, there is $h_\psi \in \psi(\M)$ such that $h_\psi \cdot
    \beta^{-1} \cdot g^{-1}$ satisfies $\mu^N$.  By compactness
    there is $h \in p^N(\M)$ such that $h \cdot \beta^{-1} \cdot
    g^{-1}$ satisfies $\mu^N$.  Then $g \in
    \stn(p^N(\M)\beta^{-1})$.
\end{proof}

\section{Proof of main theorems} \label{sec:main}

From now on $M$ is a $p$-adically closed field, $\M\succ M$ is the
monster model, $G\sq M^n$ denotes a group definable in $M$, and $\mu$
denotes the infinitesimal type of $G$ over $M$. All formulas and types
will be $G$-formulas and $G$-types.  We assume $G$ is not definably
compact.  Fix a good neighborhood basis $\{O_t : t \in \Gamma_M\}$ of
$G$.

Fix a 1-dimensional definable type $p \in S_G(M)$ which does not
specialize to any point of $G(M)$.  Such a type $p$ exists by
Proposition~\ref{sp-char}.  Fix a small $|M|^+$-saturated model $N$
with $M \prec N \prec \M$.  As usual, $p^N$ and $\mu^N$ denote the
canonical extensions to $N$.  Fix an element $\beta \in G(\M)$
realizing $p^N$.

\begin{Rmk} \label{p-unb}
  The types $p$ and $p^N$ are ``unbounded'' in the following sense:
  \begin{enumerate}
  \item If $t \in \Gamma_M$, then $O_t \notin p$.
  \item If $t \in \Gamma_N$, then $O_t \notin p^N$.
  \item If $X$ is a bounded $M$-definable subset of $G(M)$, then $X
    \notin p$.
  \item If $X$ is a bounded $N$-definable subset of $G(N)$, then $X
    \notin p^N$.
  \end{enumerate}
  Point (1) follows by Proposition~\ref{sp-char}: if $O_t \in p$ then
  $p$ specializes to a point in $O_t(M)$, because $O_t$ is definably
  compact.  Point (2) then follows because $p^N$ is the heir of $p$. 
  Points (3) and (4) reduce to (1) and (2), respectively.
\end{Rmk}



\begin{Lemma}\label{2-equi}
  $\stab(\mupn) = \bigcap_{\phi \in p} \stn(\phi(N)\beta^{-1})$.
\end{Lemma}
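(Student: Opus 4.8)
The plan is to reduce the equality $\stab(\mupn) = \bigcap_{\phi \in p} \stn(\phi(N)\beta^{-1})$ to Lemma~\ref{useful-lemma}(\ref{clause-ii}), which already gives $\stab(\mupn) = \bigcap_{\psi \in p^N} \stn(\psi(\M)\beta^{-1})$. There are two discrepancies to bridge: first, the index set runs over $\psi \in p^N$ rather than $\phi \in p$; second, the sets are $\psi(\M)$ rather than $\phi(N)$. I would handle these one at a time.

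For the first discrepancy, note that each $\psi \in p^N$ is, by definition of the canonical extension $p^N = \{\psi(x;a) \in \la_N \mid N \models (d_p x)\psi(x;a)\}$, of the form $\psi(x;a)$ with $a \in N$ and $N \models (d_p x)\psi(x;a)$. The key point is that $\stn(\psi(\M)\beta^{-1})$ depends only on the $\M$-points cut out, and one can replace the parameter $a$ by a variable: the family $\{\stn(\psi(x;a)(\M)\beta^{-1}) : N \models (d_p x)\psi(x;a)\}$ is an intersection indexed by tuples $a \in N$, and since $\stab(\mupn)$ is already the whole intersection over $p^N$, adding or removing finitely many $\la$-formulas $\psi(x;y)$ from consideration does not change it — but more usefully, for each $\la$-formula $\psi(x;y)$ the intersection over all admissible parameters $a$ is captured. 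So it suffices to show that the intersection over $\phi \in p$ (formulas over $M$) already equals the intersection over $\psi \in p^N$ (formulas over $N$). This should follow because $p^N$ is the heir of $p$: for any $\psi(x;a) \in p^N$ there is $\phi(x;b) \in p$ with $b \in M$ such that $\phi(x;b) \vdash \psi(x;a)$ in $p^N$ — actually one needs the reverse inclusion of the stabilizer sets, so I would argue that $\bigcap_{\phi \in p} \stn(\phi(N)\beta^{-1}) \subseteq \bigcap_{\psi \in p^N} \stn(\psi(\M)\beta^{-1})$ and conversely, using that every formula in $p^N$ is implied by (a conjunction reducing to) a formula obtained from $p$ by the definition schema, together with a compactness argument as in the proof of Lemma~\ref{useful-lemma}(\ref{clause-ii}).

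For the second discrepancy — passing from $\phi(\M)$ to $\phi(N)$ — the crucial input is Lemma~\ref{crux} together with Remark~\ref{p-unb}. Since $p$ is a $1$-dimensional definable type not concentrating on any $O_t$, and $\beta \in G(\M)$ realizes $p^N$ with $\beta \notin O_t(\M)$ for $t \in \Gamma_N$ (by Remark~\ref{p-unb}(2)), a realization of $p$ lying outside all the $O_t(\M)$ with $t \in \Gamma_N$ automatically realizes $p^N$. This lets us compute the standard part using $N$-points: for $\phi \in p$, the set $\st_N^\M(\phi(\M)\beta^{-1})$ and $\st_N^\M(\phi(N)\beta^{-1})$ should agree because any witness in $\phi(\M)$ infinitesimally close over $N$ to a point of $G(N)$ can be replaced, via the heir property and saturation of $N$, by a witness in $\phi(N)$ — indeed $\st_N^\M$ lands in $G(N)$, and the relevant coset representatives can be found in $N$ by $|M|^+$-saturation.

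The main obstacle I expect is the bookkeeping in the second discrepancy: one has to be careful that "$\stn(\phi(N)\beta^{-1})$" is even well-defined (it requires $\phi(N)\beta^{-1}$ to meet $\mathcal O_{G(N)}(\M)$, i.e. contain points infinitesimally close over $N$ to points of $G(N)$) and that replacing $\M$ by $N$ genuinely preserves the intersection rather than shrinking or enlarging it. The cleanest route is probably to prove the two inclusions separately: $\supseteq$ is relatively soft (an $N$-point is an $\M$-point, so $\phi(N)\beta^{-1} \subseteq \phi(\M)\beta^{-1}$, giving $\stn(\phi(N)\beta^{-1}) \subseteq \stn(\phi(\M)\beta^{-1})$, hence the right side of our claim contains $\stab(\mupn)$ — wait, this needs care with the direction), while $\subseteq$ uses that $\stab(\mupn)$ is definable over $M$ (Remark~\ref{useful-rmk}) so membership can be tested in $N$, combined with Lemma~\ref{useful-lemma}(\ref{clause-i}). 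I would lean on Remark~\ref{useful-rmk} and Lemma~\ref{new-lemma} to transfer the computation of $\stab(\mupn) = \bigcap_\phi \stab_\phi(\mup)(N)$ down to formulas over $M$, which is really the conceptual content: the $\phi$-stabilizers are $M$-definable, so everything is controlled by $p$ and $\mu$ over $M$, and $\beta$ realizing $p^N$ rather than $p$ only matters up to infinitesimals, which is exactly what the standard-part map quotients out.
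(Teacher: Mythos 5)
Your starting point (reduce to Lemma~\ref{useful-lemma}(\ref{clause-ii})) is the same as the paper's, but neither of the two bridging steps you outline is actually carried out, and the proposal contains a genuine gap exactly where the paper does its real work. The easy inclusion is $\stab(\mupn) \subseteq \bigcap_{\phi\in p}\stn(\phi(\M)\beta^{-1})$, since $p \subseteq p^N$. For the converse, your suggestion that every $\psi \in p^N$ is implied by a formula of $p$ is false: the heir contains formulas with genuinely new parameters from $N$, e.g.\ $x \notin O_t$ for $t \in \Gamma_N$ lying above $\Gamma_M$, which no formula over $M$ in $p$ implies. What is actually needed is: given $g$ in $\bigcap_{\phi\in p}\stn(\phi(\M)\beta^{-1})$, a compactness argument (as in Lemma~\ref{useful-lemma}) yields a single $b \models p$ and $\epsilon \in \mu^N(\M)$ with $g = \epsilon b\beta^{-1}$, and one must then show that $b$ realizes the heir $p^N$. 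This is where Lemma~\ref{crux} and Remark~\ref{p-unb} really enter: by Lemma~\ref{crux} it suffices that $b \notin O_t(\M)$ for all $t \in \Gamma_N$, and if $b \in O_t(\M)$ then $\beta = g^{-1}\epsilon b \in g^{-1}O_0(\M)O_t(\M) \subseteq O_{t'}(\M)$ for some $t' \in \Gamma_N$ (using $g \in G(N)$), contradicting the unboundedness of $p^N$. You cite Lemma~\ref{crux} and Remark~\ref{p-unb}, but you deploy them on your ``second discrepancy'' rather than here, and you never supply this computation; without it the essential inclusion is unproved.

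On the $\phi(N)$-versus-$\phi(\M)$ issue: your worry about well-definedness was the right instinct, but your proposed resolution --- that $\stn(\phi(N)\beta^{-1}) = \stn(\phi(\M)\beta^{-1})$ because witnesses in $\phi(\M)$ can be moved into $\phi(N)$ --- is false. For $b, g \in G(N)$, the relation $b\beta^{-1} \in \mu^N(\M)\,g$ would give $\beta \in g^{-1}\mu^N(\M)b \subseteq g^{-1}O_0(\M)b \subseteq O_{t'}(\M)$ for some $t' \in \Gamma_N$, again contradicting Remark~\ref{p-unb}; hence $\phi(N)\beta^{-1}$ misses $\mathcal{O}_{G(N)}(\M)$ entirely and $\stn(\phi(N)\beta^{-1}) = \emptyset$. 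So the statement as printed is a typo for $\phi(\M)$ (the paper's own proof, and its later uses in Corollary~\ref{n-stabilizer}, work with $\phi(\M)$), and the literal $\phi(N)$ version you tried to bridge to cannot be proved. The correct move was to prove the $\phi(\M)$ statement outright, via the heir argument sketched above.
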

\begin{proof}
  By Lemma~\ref{useful-lemma}, it suffices to show
  \begin{equation*}
    \bigcap_{\phi \in p}\stn(\phi(\M)\beta^{-1}) \subseteq
    \bigcap_{\phi \in p^N}\stn(\phi(\M)\beta^{-1})
  \end{equation*}
  Suppose $g$ belongs to the left-hand side.  In particular, $g \in
  G(N)$.  By a compactness argument similar to
  Lemma~\ref{useful-lemma}, we see that $g = \epsilon b \beta^{-1}$
  for some $\epsilon \in \mu^N(\M)$ and $b \in p(\M)$.  It suffices to
  show $b \in p^N(\M)$.  By Lemma~\ref{crux}, it suffices to show $b
  \notin O_t(\M)$ for any $t \in \Gamma_N$.  Suppose $b \in O_t(\M)$.
  Since $g \in N$, there is some $t' \in \Gamma_N$ such
  that $g^{-1} \cdot O_0(\M) \cdot O_t(\M) \subseteq O_{t'}(\M)$.
  Then
  \begin{equation*}
    \beta = g^{-1} \epsilon b \in g^{-1} O_0(\M) O_t(\M) \subseteq O_{t'}(\M),
  \end{equation*}
  contradicting the fact that $\tp(\beta/N)$ is unbounded.
\end{proof}

Note that a similar argument to the proof of Lemma 2.31 of
\cite{Yao-standard-part-map} shows the following:

\begin{Fact}\label{Fact-standard-part-dim}
Suppose that $b\in \M^k$ and $\tp(b/N)$ is definable. If $Y\sq G(\M)$ is definable over $b$ then $\stn(Y)\sq G(N)$ is definable and
\[
\dim(\stn(Y))\leq \dim(Y).
\]
\end{Fact}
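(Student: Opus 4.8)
The plan is to establish the two assertions — $N$-definability of $\stn(Y)$ and the inequality $\dim(\stn(Y))\le\dim(Y)$ — by adapting the argument of \cite[Lemma~2.31]{Yao-standard-part-map}, splitting the work into an easy ``definability'' part and a harder ``dimension'' part.

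For definability, I would first note that for $a\in G(N)$ one has $a\in\stn(Y)$ if and only if $Y(\M)$ meets the infinitesimal neighborhood $a\cdot\bigcap_{s\in\Gamma}O_s(\M)$ of $a$, where $\{O_s : s\in\Gamma\}$ is a uniformly $N$-definable neighborhood basis of $\id_G$. Since $\M$ is saturated and the $O_s$ are decreasing, a compactness argument reduces this to: $Y(\M)\cap a\cdot O_s(\M)\ne\emptyset$ for every $s\in\Gamma_N$, equivalently $a\in(Y\cdot O_s)(\M)$ for every $s\in\Gamma_N$ (using $O_s=O_s^{-1}$). Choose a formula $\chi(x;y,s)$, uniform in $s$, with $\chi(\M;b,s)=Y\cdot O_s$ for all $s$. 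Because $\tp(b/N)$ is definable, the condition ``$\M\models\chi(a;b,s)$'', viewed as a condition on $(a,s)$ with $b$ the parameter, is for $(a,s)\in N\times\Gamma_N$ equivalent to $N\models\theta(a,s)$, where $\theta(x,s):=(d_{\tp(b/N)}y)\,\chi(x;y,s)$ is an $\la_N$-formula. Hence $\stn(Y)=\{a\in G(N) : N\models\forall s\in\Gamma\;\theta(a,s)\}$ is defined over $N$.

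For the dimension bound, the crucial observation is that $\stn$ commutes with coordinate projections: a coordinate projection $\pi:\M^n\to\M^d$ is $N$-definable and continuous, hence preserves $N$-infinitesimal closeness, so $\pi(\stn(Y))\sq\stn(\pi(Y))$. Taking $\pi$ so that $\pi(\stn(Y))$ has interior in $N^d$ with $d=\dim(\stn(Y))$, we see that $\stn(\pi(Y))$ has interior in $N^d$; so it suffices to prove the following statement $R(m)$, for every $m$: if $Z\sq\M^m$ is definable over a tuple $b$ with $\tp(b/N)$ definable and $\stn(Z)$ has nonempty interior in $N^m$, then $\dim(Z)=m$ (since then $R(d)$ applied to $\pi(Y)$ gives $\dim(Y)\ge\dim(\pi(Y))=d$, and moreover $R(1),\dots,R(m)$ yield the full statement in dimensions $\le m$ by the projection argument just given). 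I would prove $R(m)$ by induction on $m$. $R(1)$ is immediate, since $\stn$ of a finite set is finite. For $R(m)$ with $m\ge2$: projecting off the last coordinate via $\pi:\M^m\to\M^{m-1}$ and applying $R(m-1)$ to $\pi(Z)$ gives $\dim(\pi(Z))=m-1$; if $\dim(Z)<m$ then $\dim(Z)=m-1$ and the generic fiber of $Z\to\pi(Z)$ is finite, so by $p$-adic cell decomposition $Z$ is a finite union of graphs $\Gamma(c_j)$ of continuous $b$-definable functions $c_j$ on cells $D_j\sq\M^{m-1}$, together with a remainder $Z_0$ with $\dim(\pi(Z_0))\le m-2$. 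Using the Lemma in dimension $m-1$ (available by induction) one gets $\dim(\stn(Z_0))\le m-1$, and an analysis of $\stn(\Gamma(c_j))$ gives $\dim(\stn(\Gamma(c_j)))\le m-1$; hence $\dim(\stn(Z))\le m-1$, contradicting that $\stn(Z)$ has interior in $N^m$.

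The hard part will be this last analysis of $\stn(\Gamma(c))$ for a continuous $b$-definable function $c$. One would like $\stn(\Gamma(c))$ to be a graph over $\stn(D)$, but a $b$-definable $c$ need not preserve $N$-infinitesimal closeness — it may be ``infinitely steep'' at scale $\Gamma_N$, as with $c(x)=x/\epsilon$ where $v(\epsilon)>\Gamma_N$ — so $\stn(\Gamma(c))$ need not be a graph. The point is instead that the locus of base points over which $\stn(\Gamma(c))$ fails to be graph-like is itself of dimension $<m-1$, so the bound $\dim(\stn(\Gamma(c)))\le m-1$ still holds. This is the technical heart, and would be carried out exactly as in \cite[Lemma~2.31]{Yao-standard-part-map}.
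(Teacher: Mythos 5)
The paper itself gives no argument for this Fact beyond the one-line attribution ``a similar argument to the proof of Lemma~2.31 of \cite{Yao-standard-part-map}'', and your proposal is essentially that same route with the surrounding reductions spelled out: the definability argument via the $\phi$-definitions of $\tp(b/N)$ is correct, the reduction of the dimension bound (coordinate projections, induction on ambient dimension, cell decomposition into graphs plus a remainder with small base) is sound, and the one genuinely hard step---bounding $\dim(\stn(\Gamma(c)))$ for a $b$-definable continuous $c$ that need not preserve $N$-infinitesimal closeness---is precisely the point you, like the paper, delegate to the cited lemma. So this is the same approach as the paper, not a new one, and it is correct modulo that citation.
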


\begin{Lemma}\label{Lem-standard-part-dimension}
There is an $\la$-formula $\phi$ such that $\dim(\stab_\phi(\mup))\leq 1$.
\end{Lemma}

\begin{proof}
  Take $\psi \in p$ such that $\dim(\psi(\M)) = 1$.  Then
  $\dim(\stn(\psi(\M)\beta^{-1})) \le 1$ by
  Fact~\ref{Fact-standard-part-dim}.  By Remark~\ref{useful-rmk} and
  Lemma~\ref{useful-lemma},
  \begin{equation*}
    \bigcap_{\phi \in \la} \stab_\phi(\mup)(N) = \stab(\mupn)
    \subseteq \stn(\psi(\M)\beta^{-1}).
  \end{equation*}
  The intersection on the left is directed, and the set on the right
  is definable, so by $|M|^+$-saturation of $N$ there is some $\phi \in \la$
  such that
  \begin{equation*}
    \stab_\phi(\mup)(N) \subseteq \stn(\psi(\M)\beta^{-1}).
  \end{equation*}
  Then $\dim(\stab_\phi(\mup)) \le \dim(\stn(\psi(\M)\beta^{-1})) \le
  1$.
\end{proof}

To finish our main result, we now show that each $\stab_\phi(\mup)$ is not definably compact.
\begin{Lemma}\label{unbounded-II}
  Assume $G$ is nearly abelian (Definition~\ref{d-na}).  For any
  $N$-definable set $I$ containing $\beta$, the set
  $\stn(I(\M)\beta^{-1})$ is unbounded.
\end{Lemma}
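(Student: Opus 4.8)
The plan is to carry out the Peterzil--Steinhorn strategy: use the ``large gaps'' result to push $\beta$ by a far-out element $\delta$ so that $\beta\delta$ still lies in $I(\M)$, and then show that the conjugate $\beta\delta\beta^{-1}$ — which lies in $I(\M)\beta^{-1}$ — has a standard part over $N$ lying far outside any prescribed $O_s$. Assume for contradiction that $\st_N^{\M}(I(\M)\beta^{-1}) \subseteq O_s$ for some $s \in \Gamma_N$. First observe that $I$ is unbounded over $N$: since $\beta \in I(\M)$ and $\tp(\beta/N) = p^N$ we have $I \in p^N$, and $p^N$ contains no bounded $N$-definable set by Remark~\ref{p-unb}(4). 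Fix $u \in \Gamma_N$ with $K \subseteq O_u$ and $u \ge s$, where $K$ is the definably compact normal subgroup with $[G,G] \subseteq K$ (Definition~\ref{d-na}), and then pick $t \in \Gamma_N$ with $O_u \cdot O_u \subseteq O_t$ by Lemma~\ref{up-down}; in particular $O_u \cdot O_s \subseteq O_t$ and $t \ge s$.

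Next I would apply Corollary~\ref{gaps-1} over $N$ to produce $t' \in \Gamma_N$ such that $I \diamond O_{t'}\setminus O_t$ is bounded over $N$, hence not in $p^N$. Since $p^N \vdash x \in I$ and $p^N$ is complete, it follows that $p^N \vdash \exists\delta\,(\delta \in O_{t'}\setminus O_t \wedge x\delta \in I)$. Here comes the key trick: using definable Skolem functions, choose an $N$-definable $\sigma$ with $p^N \vdash \sigma(x)\in O_{t'}\setminus O_t \wedge x\sigma(x)\in I$, and set $\delta = \sigma(\beta)$. Then $\delta \in O_{t'}(\M)\setminus O_t(\M)$, $\beta\delta \in I(\M)$, and — crucially — $\delta \in \dcl(N\beta)$, so $\tp(\delta/N)$ is definable of dimension $\le 1$. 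Writing $\beta\delta\beta^{-1} = [\beta,\delta]\,\delta$ with $[\beta,\delta] = \beta\delta\beta^{-1}\delta^{-1} \in [G,G](\M) \subseteq K(\M)$, the commutator also lies in $\dcl(N\beta)$, so its type over $N$ is likewise definable of dimension $\le 1$. Moreover $\delta$ concentrates on the definably compact set $O_{t'}$ and $[\beta,\delta]$ concentrates on the definably compact set $K$; hence by Lemma~\ref{specializing-types} both $\tp(\delta/N)$ and $\tp([\beta,\delta]/N)$ specialize to points, i.e.\ $\delta$ and $[\beta,\delta]$ have standard parts $\delta_0 \in G(N)$ and $\kappa_0 \in K(N)$ over $N$. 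Consequently $\beta\delta\beta^{-1} = [\beta,\delta]\,\delta$ is infinitesimally close over $N$ to $\kappa_0\delta_0 \in G(N)$, so $\beta\delta\beta^{-1} \in \mathcal{O}_{G(N)}(\M)$ with $\st_N^{\M}(\beta\delta\beta^{-1}) = \kappa_0\delta_0$.

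It remains to check $\kappa_0\delta_0 \notin O_s$, which will contradict the assumption, since $\beta\delta \in I(\M)$ witnesses $\kappa_0\delta_0 \in \st_N^{\M}(I(\M)\beta^{-1})$. If $\beta\delta\beta^{-1}$ were in the clopen $N$-definable set $O_s(\M)$, then $\delta = [\beta,\delta]^{-1}\,\beta\delta\beta^{-1} \in O_u(\M)\cdot O_s(\M) \subseteq O_t(\M)$, contradicting $\delta \notin O_t(\M)$; hence $\beta\delta\beta^{-1} \notin O_s(\M)$, and since $O_s$ is clopen this forces $\kappa_0\delta_0 = \st_N^{\M}(\beta\delta\beta^{-1}) \notin O_s$. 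I expect the genuinely delicate point to be the passage through Lemma~\ref{specializing-types}: it is precisely what forces the Skolemization step (so that $\delta$ and $[\beta,\delta]$ acquire one-dimensional \emph{definable} types over $N$), and it is also where the near-abelianity hypothesis is used a second time, to confine the commutator $[\beta,\delta]$ to the definably compact group $K$ — without this, $\beta\delta\beta^{-1}$ need not lie in $\mathcal{O}_{G(N)}(\M)$ at all, and the argument breaks down, just as it does in the non-abelian case.
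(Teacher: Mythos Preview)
Your proof is correct, but it takes a longer route than the paper's. The paper applies Corollary~\ref{gaps-1} to $I^{-1}$ rather than to $I$: since $I^{-1}$ is also unbounded and $\beta^{-1}$ avoids every bounded $N$-definable set, one obtains $a \in O_{t'}(\M) \setminus O_t(\M)$ with $a\beta \in I(\M)$, so that $a$ itself already lies in $I(\M)\beta^{-1}$. Then $\st_N^\M(a)$ exists by Lemma~\ref{specializing-types} (as $a \in O_{t'}$) and lands in the clopen set $O_{t'} \setminus O_t$, yielding the contradiction immediately. No conjugation, no commutator decomposition, and near-abelianity is invoked only once --- inside Corollary~\ref{gaps-1}.

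Your route --- applying the gap lemma to $I$ and then passing from $\beta\delta$ to $\beta\delta\beta^{-1}$ via $[\beta,\delta]\cdot\delta$ --- is a legitimate workaround, but it forces a second appeal to near-abelianity (to confine $[\beta,\delta]$ to the definably compact $K$) and requires taking standard parts of two separate elements and multiplying them. The simple trick of switching to $I^{-1}$, which works cleanly because $O_t = O_t^{-1}$ in a good neighborhood basis, makes all of this unnecessary. One minor remark: Lemma~\ref{specializing-types} is stated only for $1$-dimensional types, so strictly speaking you should note that if $\tp(\delta/N)$ or $\tp([\beta,\delta]/N)$ happens to be $0$-dimensional then the element lies in $N$ and its standard part is itself.
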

\begin{proof}
  Suppose $\stn(I(\M)\beta^{-1})$ is bounded.  Then
  $\stn(I(\M)\beta^{-1}) \subseteq O_t(N)$ for some $t \in \Gamma_N$.
  By Remark~\ref{p-unb}, $I$ is unbounded.  By Proposition~\ref{gaps-1},
  there is some $t' \in \Gamma_N$ such that the set $I^{-1} \diamond
  O_{t'} \setminus O_t$ is bounded.  Then
  \[ \beta^{-1} \notin (I^{-1} \diamond O_{t'} \setminus O_t)(\M)\]
  by Remark~\ref{p-unb}.  This means that $\beta^{-1} (O_{t'}(\M)
  \setminus O_t(\M)) \cap I(\M)^{-1} \ne \emptyset$.  Therefore there
  is $a \in O_{t'}(\M) \setminus O_t(\M)$ such that $a \beta \in
  I(\M)$.  By definable Skolem functions, we can take $a \in \dcl(N
  \beta)$.  Note $a \in I(\M)\beta^{-1}$.  By
  Lemma~\ref{specializing-types}, $\stn(a)$ exists.  Because $O_{t'}
  \setminus O_t$ is closed, we see that $\stn(a) \in O_{t'}(N)
  \setminus O_t(N)$.  This contradicts the fact that
  \begin{equation*}
    \stn(a) \in \stn(I(\M)\beta^{-1}) \subseteq O_t(N). \qedhere
  \end{equation*}
\end{proof}

\begin{Lemma} \label{n-stabilizer}
  If $G$ is nearly abelian, then the type-definable group
  $\stab(\mupn) \subseteq G(N)$ is 1-dimensional and unbounded.
\end{Lemma}

\begin{proof}
  The dimension of $\stab(\mupn)$ is at most one by
  Lemma~\ref{Lem-standard-part-dimension} and
  Remark~\ref{useful-rmk}.  If $\stab(\mupn)$ is bounded, then
  $\stab(\mupn) \subseteq O_t(N)$ for some $t \in \Gamma_N$.  By
  Lemma~\ref{2-equi}, we have
  \begin{equation*}
    \bigcap_{\psi \in p} \stn(\psi(N)\beta^{-1}) = \stab(\mupn) \subseteq O_t(N).
  \end{equation*}
  The intersection on the left is a filtered intersection of definable
  sets.  There are at most $|M|$ sets in the intersection, and $N$ is $|M|^+$-saturated.  
  Therefore there is some $\psi \in p$ such that
  $\stn(\psi(N)\beta^{-1}) \subseteq O_t(N)$, contradicting
  Lemma~\ref{unbounded-II}.  Therefore $\stab(\mupn)$ is unbounded.
  In particular, it is infinite, so it has dimension at least 1.
\end{proof}

\begin{Lemma}\label{near-abelian-almost-there}
  Suppose $G$ is nearly abelian.  Then there is $\phi\in \la$ such
  that the $M$-definable group $\stab_\phi(\mup)$ is not definably
  compact and has dimension $1$.
\end{Lemma}

\begin{proof}
  By Lemma~\ref{Lem-standard-part-dimension} there is an
  $\la$-formula $\phi$ such that $\dim(\stab_\phi(\mup)) \le 1$.  By
  Remark~\ref{useful-rmk}, we have
  \begin{equation*}
    \stab_\phi(\mup)(N) \supseteq \stab(\mupn),
  \end{equation*}
  and therefore $\stab_\phi(\mup)$ is unbounded by
  Lemma~\ref{n-stabilizer}.  In particular, $\stab_\phi(\mup)$ is
  infinite, and $\dim(\stab_\phi(\mup)) \ge 1$.
\end{proof}

\begin{Thm}\label{near-abelian-main-theorem}
  Let $G$ be a definable group in a $p$-adically closed field $M$.  Suppose $G$ is nearly abelian, and not definably compact.
  \begin{enumerate}
      \item \label{namt-1} $G$ has a one-dimensional definable subgroup which is not definably compact.
      \item \label{namt-2} If $p \in S_G(M)$ is a definable unbounded 1-dimensional type, then there is $\phi \in \la$ such that $\stab_\phi(\mup)$ is a one-dimensional definable subgroup of $G$ which is not definably compact.
      \item \label{namt-3} Suppose in addition that $M$ is $\aleph_1$-saturated.  If $p \in S_G(M)$ is a definable unbounded 1-dimensional type, then $\stab^\mu(p)$ is a one-dimensional type-definable subgroup of $G$ which is unbounded.
  \end{enumerate}
\end{Thm}

\begin{proof}
  Part~(\ref{namt-2}) is Lemma~\ref{near-abelian-almost-there}.  Part~(\ref{namt-1}) then follows because there is at least one unbounded 1-dimensional definable type by Proposition~\ref{sp-char}. For Part~(\ref{namt-3}), take a countable model $M_0 \preceq M$ such that $G$ and $p$ are $M_0$-definable.  Then apply Lemma~\ref{n-stabilizer} to $M$ and $M_0$ in place of $N$ and $M$ (respectively), to see that $\stab(\mup)$ is 1-dimensional and unbounded.  Type-definability is by Fact~\ref{fact-4.7}.
\end{proof}

Recall from \cite{Definable-f-Generic-Groups-over-p-Adic-Numbers} that in an NIP context, a global type $p\in S_G(\M)$ is said to be a
definable $f$-generic, abbreviated as \emph{dfg},  if there is a small submodel $M_0$ such that every left $G$-translate of $p$ is definable over $M_0$.  In  \cite{Definable-f-Generic-Groups-over-p-Adic-Numbers}, Pillay and the second author showed that: 
\begin{Fact}
A group $G$ definable over $\Q$ has \emph{dfg} iff there is a normal sequence of definable subgroups
\[
G_0\lhd ... G_i\lhd G_{i+1} ...\lhd G_n
\]
such that $G_0$ is finite, $G_n$ is a finite index subgroup of $G$, and each $G_{i+1}/G_{i}$
is definably isomorphic to either the additive group $\mathbb G_a$, or a finite index subgroup of the multiplicative
group $\mathbb G_m$.
\end{Fact}
The intuition is that ``\emph{dfg}'' means ``totally non-compact''  in the $p$-adic context.  

\begin{Lemma}
  Let $G$ be a one-dimensional definable group.  If $G$ is not definably compact, then $G$ has \textit{dfg}.
\end{Lemma}
\begin{proof}
  Recall from \cite[Section 4]{HPP} that $G$ has \emph{finitely satisfiable generics} (\emph{fsg}) if there is a small model $M_0$ and a global type $p(x)$ in $G$ such that every left $G$-translate of $p$ is finitely satisfiable in $M_0$.  By \cite[Lemma 2.9]{Definable-f-Generic-Groups-over-p-Adic-Numbers}, $G$ has \emph{fsg} or \emph{dfg}.\footnote{In \cite{Definable-f-Generic-Groups-over-p-Adic-Numbers} this is stated only for groups definable over $\Q$.  The assumption is used in order to apply \cite[Theorem~2.4]{note-on-p-adic-groups}.  However, \cite[Remark~2.5]{note-on-p-adic-groups} shows that this assumption is unnecessary.}  It suffices to show that $G$ does not have \emph{fsg}.  Suppose otherwise, witnessed by $p$ and $M_0$.  Recall that a definable set $X \subseteq G$ is \emph{generic} if finitely many left translates cover $G$.  By \cite[Proposition~4.2]{HPP}, the complement of a non-generic set is generic, and every  generic set intersects $G(M_0)$.  Let $\{W_\gamma\}_{\gamma}$ be a $\Gamma$-exhaustion of $G$.  By taking $\gamma$ sufficiently large, we can arrange for $G(M_0) \subseteq W_\gamma$, because $M_0$ is small.  Then $G \setminus W_\gamma$ does not intersect $G(M_0)$, so it is not generic.  Therefore the complement $W_\gamma$ is generic.  A finite union of left translates of $W_\gamma$ will be definably compact, so it cannot be all of $G$.  We conclude that $G$ does not have \emph{fsg}, and instead has \emph{dfg}.
\end{proof}
The following is then a corollary of Theorem~\ref{near-abelian-main-theorem}.
\begin{Cor}\label{old-5.9}
  Let $G\sq M^k$ be a nearly abelian definable group.  If $G$ is not
  definably compact, then $G$ has a one-dimensional \textit{dfg} subgroup. 
\end{Cor}

Next, we consider the general non-abelian case.  Let $G, M, N, \M, p, \beta$ be as in the start of this section.

\begin{Lemma}\label{unbounded-II.5}
  For any $N$-definable set $I$ containing $\beta$, and any finite $N$-definable
  subgroup $F \subseteq G$, the set $\stn(I(\M)\beta^{-1})$ contains a
  point outside of $F$.
\end{Lemma}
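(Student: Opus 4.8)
The plan is to adapt the proof of Lemma~\ref{unbounded-II} to the non-abelian setting, using Corollary~\ref{gaps-2} in place of Corollary~\ref{gaps-1}. Suppose for contradiction that $\stn(I(\M)\beta^{-1}) \subseteq F$, i.e., every point of $\stn(I(\M)\beta^{-1})$ lies in the finite subgroup $F \subseteq G(N)$. Since $F$ is finite, it is a bounded set, so in particular $\stn(I(\M)\beta^{-1}) \subseteq F \cdot O_t(N)$ for every $t \in \Gamma_N$, and also $\stn(I(\M)\beta^{-1})$ is bounded. By Remark~\ref{p-unb}, the $N$-definable set $I$ is unbounded (since $\beta$ realizes the unbounded type $p^N$, no bounded $N$-definable set belongs to $p^N$, but $I \in \tp(\beta/N)$). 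The set $I^{-1}$ is then also unbounded. Apply Corollary~\ref{gaps-2} to $I^{-1}$ and the finite subgroup $F^{-1} = F$: for sufficiently small $s \in \Gamma_N$ and sufficiently large $t' \in \Gamma_N$, the set $I^{-1} \diamond O_{t'} \setminus (F O_s)$ is bounded.

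Next I would extract a witness as in Lemma~\ref{unbounded-II}. Choosing $s$ small enough that $F \cdot O_s(N) \supseteq \stn(I(\M)\beta^{-1})$ is \emph{not} automatic, so instead the argument must be: since $I^{-1} \diamond O_{t'} \setminus (F O_s)$ is bounded and $N$-definable, by Remark~\ref{p-unb} we have $\beta^{-1} \notin (I^{-1} \diamond O_{t'} \setminus (F O_s))(\M)$. This means that $\beta^{-1}(O_{t'}(\M) \setminus (F O_s)(\M)) \cap I(\M)^{-1} \ne \emptyset$. Hence there is $a \in O_{t'}(\M) \setminus (F O_s)(\M)$ with $a\beta \in I(\M)$; by definable Skolem functions take $a \in \dcl(N\beta)$, so $a \in I(\M)\beta^{-1}$ and $\tp(a/N)$ is definable. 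Since $a \in O_{t'}(\M)$ is bounded over $N$, Lemma~\ref{specializing-types} applies and $\stn(a)$ exists. Because $O_{t'}$ and $F O_s$ are closed (the latter a finite union of translates of the clopen $O_s$), the standard part respects them: $\stn(a) \in O_{t'}(N) \setminus (F O_s)(N)$, and in particular $\stn(a) \notin F \cdot O_s(N) \supseteq F$. But $\stn(a) \in \stn(I(\M)\beta^{-1}) \subseteq F$, a contradiction.

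The main subtlety, and the step I expect to be most delicate, is ensuring that the standard part map sends $O_{t'}(\M) \setminus (F O_s)(\M)$ into $O_{t'}(N) \setminus (F O_s)(N)$ — that is, that membership in the closed sets $O_{t'}$ and $F O_s$ is preserved under $\stn$. For $O_{t'}$ this is exactly the argument used at the end of Lemma~\ref{unbounded-II}: if $a \in O_{t'}(\M)$ and $\stn(a) \notin O_{t'}(N)$, then $\stn(a)$ lies in the open complement of the clopen $O_{t'}$, so $a$ is infinitesimally close to a point of that complement, forcing $a \notin O_{t'}(\M)$. The same reasoning applies verbatim to $F O_s$, which is clopen because $O_s$ is clopen and $F$ is finite. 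So the proof is structurally identical to Lemma~\ref{unbounded-II}, with the role of ``bounded'' replaced by ``lying in $F \cdot O_s$'' and Corollary~\ref{gaps-1} replaced by Corollary~\ref{gaps-2}; the near-abelian hypothesis is not needed. One should double-check that the ``sufficiently small $s$, sufficiently large $t'$'' quantifier order in Corollary~\ref{gaps-2} is compatible with fixing $t'$ first and then choosing $s$ — reading the statement, $s$ and $t'$ are chosen together, so there is no circularity.
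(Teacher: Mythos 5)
Your proof is correct and follows essentially the same route as the paper's: show $I$ (hence $I^{-1}$) is unbounded, apply Corollary~\ref{gaps-2} to $I^{-1}$ and $F$, use Remark~\ref{p-unb} to conclude $\beta^{-1}$ avoids the bounded exceptional set, extract $a \in O_{t'}(\M)\setminus F(\M)O_s(\M)$ with $a\beta \in I(\M)$ and $a \in \dcl(N\beta)$ via definable Skolem functions, and take $\stn(a)$ to obtain a point of $\stn(I(\M)\beta^{-1})$ outside $F$. The only cosmetic differences are your superfluous reductio framing (the argument directly produces the desired point) and your use of clopenness of $O_{t'}$ and $FO_s$ to locate $\stn(a)$, where the paper instead applies Lemma~\ref{specializing-types} to the definably compact set $O_{t'}\setminus O_sF$; both are fine.
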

\begin{proof}
  As in Lemma~\ref{unbounded-II}, $I$ is unbounded.  Let $J$ be the
  unbounded set $I^{-1}$.  By Proposition~\ref{gaps-2}, there are $s, t
  \in \Gamma_N$ such that $J \diamond O_t \setminus F O_s$ is bounded.
  Then $\beta^{-1}
  \notin J \diamond O_t \setminus F O_s$.  Therefore
  \begin{equation*}
    \beta^{-1} (O_t \setminus F O_s) \cap J \ne \emptyset,
  \end{equation*}
  or equivalently
  \begin{equation*}
    (O_t \setminus O_s F) \beta \cap I \ne \emptyset.
  \end{equation*}
  Therefore there is $a \in O_t(\M) \setminus O_s(\M) F(\M)$ such that
  $a \beta \in I(\M)$.  By definable Skolem functions, we can take $a
  \in \dcl(N \beta)$.  Note $a \in I(\M)\beta^{-1}$.  Because $O_{t}
  \setminus O_s F$ is definably compact,
  Lemma~\ref{specializing-types} implies that $\stn(a)$ exists and is
  in $O_t \setminus O_s F$.  Then $\stn(a)$ is not in $F$, since $F
  \subseteq O_s F$.
\end{proof}

\begin{Lemma}\label{old-5.11}
  The group $\stab(\mupn)$ is 1-dimensional.  In particular, it is
  infinite.
\end{Lemma}

\begin{proof}
  As in Lemma~\ref{n-stabilizer}, the dimension is at most 1.  If
  $\dim(\stab(\mupn)) = 0$, then $\stab(\mupn)$ is a finite subgroup
  $F$.  By Lemma~\ref{2-equi}, we have
  \begin{equation*}
    \bigcap_{\phi \in p} \stn(\phi(N)\beta^{-1}) = \stab(\mupn) = F.
  \end{equation*}
  As in Lemma~\ref{n-stabilizer}, there is some $\phi \in p$ such
  that $\stn(\phi(N)\beta^{-1}) \subseteq F$.  This contradicts
  Lemma~\ref{unbounded-II.5}
\end{proof}

By Lemma~\ref{Lem-standard-part-dimension} and
Remark~\ref{useful-rmk}, we conclude
\begin{Lemma}\label{old-5.12}
  There is $\phi \in \la$ such that the $M$-definable group
  $\stab_\phi(\mup)$ has dimension 1.
\end{Lemma}
  We summarize the non-abelian case in the following theorem.
\begin{Thm}\label{end-theorem}
  Let $G$ be a definable group in a $p$-adically closed field $M$.  Suppose $G$ is not definably compact.  Let $p \in S_G(M)$ be a definable unbounded 1-dimensional type.  %
  \begin{enumerate}
      \item \label{mt-1} There is $\phi \in \la$ such that $\stab_\phi(\mup)$ has dimension 1.
      \item \label{mt-2} If $M$ is $\aleph_1$-saturated, then $\stab^\mu(p)$ is a one-dimensional type-definable subgroup of $G$.
  \end{enumerate}
\end{Thm}

\begin{proof}
Part~(\ref{mt-1}) is Lemma~\ref{old-5.12}.  For Part~(\ref{mt-2}), take a countable submodel $M_0 \preceq M$ such that $G$ and $p$ are $M_0$-definable.  Then apply Lemma~\ref{old-5.11} with $M$ and $M_0$ in place of $N$ and $M$.
\end{proof}

\begin{Acks}
The first author was supported by the National Natural Science Foundation of China (Grant No. 12101131).  The second author was supported by the National Social Science Fund of China (Grant No. 20CZX050).
\end{Acks}

\bibliographystyle{plain} \bibliography{bibliography}{}

\end{document}